\numberwithin{equation}{section}
\definecolor{dblue}{rgb}{0,0,0.45}
\definecolor{red}{rgb}{0.7,0,0}
\newtheorem{theorem}{Theorem}[section]
\newtheorem{lemma}[theorem]{Lemma}
\newtheorem*{lemma*}{Lemma}
\newtheorem{corollary}[theorem]{Corollary}
\newtheorem{proposition}[theorem]{Proposition}
\theoremstyle{definition}
\newtheorem{remark}[theorem]{Remark}
\newtheorem{example}[theorem]{Example}
\theoremstyle{remark}
\newcommand{\R}{{\mathbb R}}
\newcommand{\cA}{{\mathcal A}}
\newcommand{\cC}{{\mathcal C}}
\newcommand{\cD}{{\mathcal D}}
\newcommand{\cG}{{\mathcal G}}
\newcommand{\cH}{{\mathcal H}}
\newcommand{\cI}{{\mathcal I}}
\newcommand{\cK}{{\mathcal K}}
\newcommand{\cL}{{\mathcal L}}
\newcommand{\cS}{{\mathcal S}}
\newcommand{\cY}{{\mathcal Y}}
\newcommand{\cZ}{{\mathcal Z}}
\newcommand{\p}{\partial}
\newcommand{\la}{\langle}
\newcommand{\ra}{\rangle}
\newcommand{\nn}{\nonumber}
\newcommand{\ve}{\varepsilon}
\newcommand{\kyosu}{\sqrt{-1}\,}
\begin{document}

\title{Heat trace asymptotics 
on 
equiregular 
sub-Riemannian manifolds
}
\author{  Yuzuru Inahama and Setsuo Taniguchi
}
\maketitle

\begin{abstract}
We study a ``div-grad type" sub-Laplacian  with respect to a smooth measure
and its associated heat semigroup 
on a compact equiregular sub-Riemannian manifold.
We prove a short time asymptotic expansion of the heat trace
up to any order.
Our main result holds true for 
any smooth measure on the manifold, 
but it has a spectral geometric meaning 
when Popp's measure is considered.
Our proof is probabilistic.
In particular, we use S. Watanabe's distributional Malliavin calculus.
\\
\\
Mathematics Subject Classification (2010):~ 53C17, 60H07, 58J65, 35K08, 41A60.
\\
Keywords:~ sub-Riemannian geometry,
heat kernel, stochastic differential equation,
 Malliavin calculus,  asymptotic expansion.
\end{abstract}


\section{Introduction and main result}
In Introduction of his textbook on sub-Riemannian geometry \cite{mo}, 
R. Montgomery emphasized the importance of 
spectral geometric problems in sub-Riemannian geometry
by asking ``Can you 'hear' the sub-Riemannian metric 
from the spectrum of its sublaplacian?"
(Of course, this is a slight modification of M. Kac's renowned question.)
In the same paragraph, he also mentioned Malliavin calculus, 
which is a powerful
 infinite-dimensional functional analytic method
for studying stochastic differential equations (SDEs)
under the H\"ormander condition on the coefficient vector fields.

However, there is no canonical choice of measure
on a general sub-Riemannian manifold
and hence no canonical choice of sub-Laplacian.
Therefore, in order to pose spectral geometric questions, 
one should consider 
a subclass of sub-Riemannian manifolds.
In this regard, the class of 
equiregular sub-Riemannian manifolds seems suitable for the following reason.
As Montgomery himself proved in \cite[Section 10.6]{mo}, 
there exists a canonical smooth volume called Popp's measure
 on an equiregular sub-Riemannian manifold.
Popp's measure is determined by the sub-Riemannian metric only.

In the present paper, we contribute to this topic
by proving a short time asymptotic expansion of 
the heat trace up to an arbitrary order on a
compact  equiregular sub-Riemannian manifold.
Our main tool is Watanabe's distributional Malliavin calculus.

To state our main result, 
we start by recalling 
the definition of an equiregular sub-Riemannian manifold.
Note that in many literatures 
an {\it equiregular} sub-Riemannian manifold is simply called {\it regular}.

We say that 
 $(M,\cD,g)$ is a sub-Riemannian manifold if   
(i)~$M$ is a connected, smooth manifold of
dimension $d$, 
(ii)~$\cD\subset TM$, $TM$ being the tangent bundle of $M$,
is a smooth distribution of constant rank $n~(1 \le n \le d)$
which satisfies the H\"ormander condition at every $x \in M$
and
(iii)~$g=(g_x)_{x\in M}$,
where each $g_x$ is an inner product on the fiber $\cD_x$,
and $x\mapsto g_x$ is smooth as a function of $x$.
(When there is no risk of confusion, we simply say that 
$M$ is a sub-Riemannian manifold.)

The precise statement of 
 the H\"ormander condition on $\cD$ at $x \in M$ is as follows:
Define $\cD_0(x)=\{0\}$,
$\cD_1 (x) =\cD (x)$ and
\[
\cD_{k} (x)
=
\mbox{linear span of } 
\Bigl\{  
 \underbrace{
[ [[A_1, A_2], \ldots  ], A_l]
 }_{(l-1) {\rm brackets}}
 (x)
 \, \Big\vert\,
 1 \le l \le k,  \,  A_{1}, \ldots, A_l \in C^\infty(M;\cD)
 \Bigr\}
\]
for $k \ge 2$.
Here, $C^\infty(M;\cD)$ stands for the $C^\infty$-module
 of smooth sections of $\cD$ over $M$.
We say that 
$\cD$ satisfies the H\"ormander condition at $x$ 
if there exists $N =N(x)$ such that 
$\cD_N (x)= T_x M$.

A sub-Riemannian manifold
 $(M,\cD,g)$ is said to be equiregular 
 if $\dim \cD_k(x)$ is constant in $x \in M$ for all $k \ge 1$.
The smallest constant $N_0$ such that 
$\cD_{N_0} (x)= T_x M$ is called the {\it step} of the H\"ormander condition.
In this case, 
$\nu :=\sum_{k=1}^{N_0} k(\dim \cD_k(x)-\dim\cD_{k-1}(x))$,
is also constant in $x$ and equals the Hausdorff dimension 
of $M$ equipped with the usual sub-Riemannian distance.

Now we define a ``div-grad type" sub-Laplacian 
on a  sub-Riemannian manifold $M$.
Let $\mu$ be a smooth volume on $M$, that is, 
$\mu$ is a measure on $M$ whose restriction to every local coordinate chart 
is written as a strictly positive smooth density function 
times the Lebesgue measure on the chart.
In the equiregular case, the most important example of smooth volume
is Popp's measure introduced in Section 10.6, \cite{mo} (see also \cite{br})
since Popp's measure is determined solely
by the equiregular sub-Riemannian  structure.

We study the second-order differential operator of the form
$\Delta = \text{\rm div}_\mu \nabla^{\cD}$,
where $\nabla^{\cD}$ is the horizontal gradient in the direction of $\cD$
and $\text{\rm div}_\mu$ is the divergence with respect to $\mu$.
(In our convention, $\Delta$ is a non-positive operator.)
By the way it is defined, $\Delta$ with its domain being $C_0^{\infty} (M)$
is clearly symmetric on $L^2 (\mu)$.
If $M$ is compact, then $\Delta$ is known to be 
essentially self-adjoint on $C^{\infty} (M)$ and
$e^{t \Delta /2}$ is of trace class for every $t >0$, 
where $(e^{t \Delta /2})_{t \ge 0}$ is the heat semigroup
 associated with $ \Delta /2$.

\medskip

Now we are in a position to state our main result in this paper. 
The proof of this theorem is immediate from Theorem \ref{tm.subrie}.
As we have already mentioned, 
it has a spectral geometric meaning when $\mu$ is  Popp's measure.
\begin{theorem}\label{tm.main}
Let $M$ be a compact equiregular sub-Riemannian manifold
of Hausdorff dimension $\nu$
and let $\mu$ be a smooth volume on $M$.
Then, we have the following  
asymptotic expansion of the heat trace:
\begin{equation}\label{asy.trc}
{\rm Trace} (e^{t \Delta /2})  \sim 
\frac{1}{t^{\nu/2}} 
(c_0+ c_1t + c_2 t^2+\cdots)
\qquad
\mbox{as $t \searrow 0$}
\end{equation}
for certain constants $c_0 >0$ and  $c_1, c_2, \ldots \in {\mathbb R}$.
\end{theorem}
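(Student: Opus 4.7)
The plan is to deduce the trace expansion from a pointwise on-diagonal heat kernel expansion. Let $p_t(x,y)$ denote the heat kernel of $e^{t\triangle/2}$ with respect to $\mu$. Since $e^{t\triangle/2}$ is trace class on the compact $M$, one has
\[
{\rm Trace}(e^{t\triangle/2})=\int_M p_t(x,x)\,\mu(dx),
\]
so it suffices to establish an expansion
\[
p_t(x,x)\sim t^{-\nu/2}\bigl(a_0(x)+a_1(x) t+a_2(x) t^2+\cdots\bigr)
\]
that is uniform in $x\in M$, with smooth coefficients $a_k$ and $a_0>0$; the constants in \eqref{asy.trc} then emerge as $c_k=\int_M a_k(x)\,\mu(dx)$.

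To realize $p_t(x,x)$ probabilistically, I would represent $\triangle/2$ as the generator of the Stratonovich SDE
\[
dX^x_t=\sum_{i=1}^n V_i(X^x_t)\circ dB^i_t+V_0(X^x_t)\,dt,\qquad X^x_0=x,
\]
where $\{V_1,\dots,V_n\}$ is a (local) orthonormal frame of $\cD$ and $V_0$ is the first-order drift coming from the divergence with respect to $\mu$. By the H\"ormander condition, $X^x_t$ admits a smooth density and, in the sense of Watanabe's distributional Malliavin calculus, $p_t(x,x)=\mathbb{E}[\delta_x(X^x_t)]$.

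To extract the $t^{-\nu/2}$ scale, fix $x$ and choose privileged coordinates centered at $x$ in the sense of Bella\"iche. Equiregularity makes the coordinate weights $w_1,\dots,w_d\in\{1,\dots,N_0\}$ constant in $x$ with $\sum_j w_j=\nu$. Let $\delta_\varepsilon$ be the associated anisotropic dilation. Writing $Y^x_s:=\delta_{1/\sqrt{t}}(X^x_{ts})$ and using Brownian scaling, the SDE for $Y^x$ converges as $t\searrow 0$ to the SDE driven by the nilpotent approximation $\widehat V_i^{\,x}$ of $V_i$ at $x$, and the change of variables identity yields
\[
p_t(x,x)=t^{-\nu/2}\,\mathbb{E}[\delta_0(Y^x_1)].
\]
A Taylor expansion of the coefficients in $\sqrt{t}$, combined with an asymptotic expansion of the generalized Wiener functional $\delta_0(Y^x_1)$ in the sense of Watanabe, then produces an expansion of the right-hand side in powers of $\sqrt{t}$. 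The half-integer powers cancel thanks to the $B\mapsto -B$ symmetry of Brownian motion together with the parity properties of the nilpotent limit, leaving only integer powers of $t$.

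The main obstacle is uniformity in $x$. The Watanabe calculus requires uniform non-degeneracy of the Malliavin covariance matrix of $Y^x_1$ together with uniform $L^p$ bounds on its inverse. Equiregularity supplies the structural input—the dimensions $\dim \cD_k(x)$ and the step $N_0$ are constant, so the brackets needed to span $T_xM$ can be chosen smoothly in $x$—but one must still verify that the privileged coordinates and nilpotent approximations depend smoothly on $x$, and that the integration-by-parts bounds of Malliavin calculus hold uniformly on the compact manifold. Once this uniform control is secured, the on-diagonal expansion holds uniformly in $x$ with smooth coefficients $a_k$, termwise integration against $\mu$ yields \eqref{asy.trc}, and positivity of $c_0=\int_M a_0(x)\,\mu(dx)$ follows from the strict positivity at the origin of the heat kernel of the nilpotent tangent cone.
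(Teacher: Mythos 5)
Your proposal is correct in outline and shares the paper's overall philosophy (reduce the trace to $\int_M p_t(x,x)\,\mu(dx)$, prove a uniform on-diagonal expansion by Watanabe's distributional Malliavin calculus, kill the half-integer powers by the $w\mapsto -w$ parity), but it follows a genuinely different technical route. You rescale on the base space: privileged coordinates at $x$, the anisotropic dilation $\delta_{1/\sqrt t}$ on ${\mathbb R}^d$, and convergence of the rescaled SDE to the nilpotent approximation of the $V_i$ at $x$ --- essentially Ben Arous's scheme upgraded to the level of Watanabe distributions, with the drift $V_0$ carried along through the scaling. The paper instead rescales upstairs on the free nilpotent Lie algebra: it lifts Brownian motion to $U^N_t=\log Y^N_t$, uses the stochastic Taylor expansion $X^{\ve}(1,x)=x+F_N(x,\Delta^N_{\ve}U^N_1)+\hat R$, and extracts the factor $\ve^{-\nu}$ by a linear-algebraic change of variables through the matrices $B_{\cH}(x)$, $\tilde\Gamma^{\ve}_N(x)$; the drift is removed beforehand by Girsanov's theorem (Corollary \ref{cor.Gir}), which is possible precisely because $V_0$ is horizontal, and which spares the paper any anisotropic dilation. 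Your route buys geometric transparency (the $t^{-\nu/2}$ appears immediately as the Jacobian of $\delta_{1/\sqrt t}$, since $\sum_j w_j=\nu$), but it concentrates all the difficulty in two places you only name: (i) the uniform-in-$x$ and uniform-in-$t$ $L^p$ bound on $(\det\sigma[Y^x_1])^{-1}$, which is the quantitative heart of the matter and in the paper requires the cutoff $\chi^{\ve}_N$ and a modified pullback (Propositions \ref{pr.tak58}--\ref{pr.mdWat}) because the approximating functional is non-degenerate only on an event of overwhelming probability; and (ii) the passage from the manifold to ${\mathbb R}^d$, where a local orthonormal frame does not by itself give a globally defined diffusion --- the paper constructs the process on the bundle $O(\cD)$ via stochastic parallel transport and then proves a localization estimate \eqref{ineq.lcl} showing that $p_t(x,x)$ agrees with the Euclidean heat kernel up to $O(e^{-c/t})$. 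Neither point invalidates your plan, but both must be supplied for the argument to close.
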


\medskip

Since the asymptotic expansion in Theorem \ref{tm.main} is 
up to an arbitrary order, 
we can prove
 meromorphic prolongation
of the spectral zeta function associated with $\Delta$ by a standard argument.
Denote by 
$0 =
\lambda_0 < \lambda_1 \le \lambda_2 \le \cdots$
be all the eigenvalues of $-\Delta$ in increasing order
with the multiplicities being counted and set 
\[
\zeta_{\Delta}  (s) = 
\sum_{i=1}^{\infty} 
 \lambda_i^{-s}
\qquad
(s \in {\mathbb C}, \,  \Re s > \frac{\nu}{2}).
\]
By the Tauberian theorem,
the series on the right-hand side absolutely converges 
and defines a holomorphic function 
on $\{ s \in {\mathbb C} \mid \Re s > \nu /2 \}$.

\begin{corollary}\label{cor.z}
Let assumptions be the same as in Theorem \ref{tm.main}.
Then, $\zeta_{\Delta}$ admits a meromorphic prolongation
to the whole complex plane ${\mathbb C}$.
\end{corollary}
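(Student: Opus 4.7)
The plan is to apply the Mellin transform argument that converts a heat trace asymptotic expansion of arbitrary order into a meromorphic continuation of the corresponding zeta function; the hypothesis of Theorem \ref{tm.main} is designed precisely for this purpose.

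First, I would express $\zeta_\triangle$ as a Mellin transform of the heat trace. Using the identity $\lambda^{-s} = \Gamma(s)^{-1}\int_0^\infty t^{s-1} e^{-\lambda t}\,dt$ for $\lambda>0$ and $\Re s > 0$, applied to $\lambda = \lambda_i/2$ and summed over $i\ge 1$, one obtains for $\Re s > \nu/2$
\begin{equation*}
\zeta_\triangle(s) = \frac{2^s}{\Gamma(s)} \int_0^\infty t^{s-1}\bigl({\rm Trace}(e^{t\triangle/2}) - 1\bigr)\,dt,
\end{equation*}
where the subtracted $1$ removes the contribution of the zero eigenvalue $\lambda_0=0$. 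The exchange of sum and integral is justified in this half-plane by absolute convergence, which in turn comes from Weyl-type bounds on $\lambda_i$ implied by the asymptotic expansion \eqref{asy.trc}.

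Next, I would split the integral as $\int_0^1 + \int_1^\infty$. The tail integral defines an entire function of $s$, because compactness of $M$ forces $\lambda_1 > 0$, so ${\rm Trace}(e^{t\triangle/2}) - 1 = O(e^{-\lambda_1 t/2})$ as $t\to\infty$, and the integrand decays exponentially uniformly on compact $s$-sets. The small-$t$ integral is handled using Theorem \ref{tm.main}: for each $K\in\N$ we may write
\begin{equation*}
{\rm Trace}(e^{t\triangle/2}) = t^{-\nu/2}\sum_{k=0}^{K} c_k t^{k} + R_K(t), \qquad R_K(t) = O(t^{K+1-\nu/2}) \text{ as } t\searrow 0.
\end{equation*}
Substituting and integrating term by term on $(0,1)$, each principal term contributes
\begin{equation*}
c_k \int_0^1 t^{s-1-\nu/2+k}\,dt = \frac{c_k}{s - \nu/2 + k},
\end{equation*}
valid initially for $\Re s > \nu/2 - k$, while the subtracted constant contributes $-1/s$ and the remainder $\int_0^1 t^{s-1} R_K(t)\,dt$ is holomorphic on $\{\Re s > \nu/2 - K - 1\}$.

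Putting these pieces together, $\Gamma(s) 2^{-s}\zeta_\triangle(s)$ extends meromorphically to $\{\Re s > \nu/2 - K - 1\}$ with at most simple poles at $s = \nu/2 - k$ ($0\le k\le K$) and at $s=0$. Multiplying by $2^s/\Gamma(s)$, which is entire, yields the meromorphic extension of $\zeta_\triangle$ to the same half-plane; note that the simple pole of $-1/s$ at $s=0$ is automatically cancelled by the simple zero of $1/\Gamma(s)$ there. Since $K$ is arbitrary, this produces the meromorphic prolongation to all of $\C$. There is no genuinely hard step here; the only point requiring a modicum of care is bookkeeping of the $\lambda_0=0$ term and verifying that the $O$-bound on $R_K(t)$ is integrable against $t^{s-1}$ uniformly on compact subsets of the appropriate half-plane, both of which are routine.
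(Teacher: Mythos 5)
Your proposal is correct and is exactly the ``standard argument'' (Mellin transform of the heat trace plus term-by-term integration of the small-time expansion) that the paper invokes without writing out after Theorem \ref{tm.main}. The only slip is the prefactor: applying $\lambda^{-s}=\Gamma(s)^{-1}\int_0^\infty t^{s-1}e^{-\lambda t}\,dt$ with $\lambda=\lambda_i/2$ gives $\lambda_i^{-s}=2^{-s}\Gamma(s)^{-1}\int_0^\infty t^{s-1}e^{-\lambda_i t/2}\,dt$, so your $2^{s}$ should be $2^{-s}$ --- an inconsequential constant that does not affect meromorphy, the pole locations, or the cancellation at $s=0$.
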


\medskip

To the best of our knowledge, Theorem \ref{tm.main} and
Corollary \ref{cor.z} seem new for a general compact
equiregular sub-Riemannian manifold.
It should be noted, however, that the leading term 
of the asymptotics \eqref{asy.trc} is already known.
(See \cite{me, hk} for example. 
No explicit value of $c_0$ is known in general.)
For some concrete examples or 
relatively small classes of compact equiregular 
sub-Riemannian manifolds, 
the full asymptotic expansion \eqref{asy.trc} 
or the meromorphic extension 
of the spectral zeta function was proved.
(See \cite{st, bgs, chya, po, bfi1, bfi2} and references therein.)
Most of such classes are subclasses of  
step-two or  corank-one sub-Riemannian manifolds.

Our proof of Theorem \ref{tm.main} is based on Takanobu's result \cite{tak} 
on the short time asymptotic expansion of hypoelliptic heat kernels 
on the diagonal.
A preceding work by Ben Arous \cite{bena} should also be mentioned.
Both of   \cite{bena, tak} are probabilistic
and formulated on ${\mathbb R}^d$. 
Compared to \cite{bena},
 \cite{tak} has the following features:
(i) The SDE has a drift term. 
Unlike most of the problems for SDEs, 
a drift term makes this kind of asymptotic quite complicated.
(ii) The H\"ormander condition is assumed only at the starting point.
(iii) The asymptotics expansion takes place 
at the level of Watanabe distributions, 
which is stronger than an asymptotic expansion of the heat kernel.
On the other hand,  \cite{bena} proves 
a uniform asymptotic expansion of the heat kernel
with respect to the starting point 
as it varies in a compact ``equiregular"  subset of ${\mathbb R}^d$.
(It seems that Takanobu's motivation was to investigate 
what happens when the H\"ormander condition is not so nice.
He discovered that a pathological phenomenon happens
when the condition is weak enough.
Later, this phenomenon was further studied by 
Ben Arous and L\'eandre \cite{bl1, bl2}.)

We first prove a uniform asymptotic expansion
at the level of Watanabe distributions 
under the equiregular H\"ormander condition 
for a driftless SDE on ${\mathbb R}^d$ (Theorem \ref{tm.R^d}).  
Although it is similar to the 
main results in these papers, this theorem,
precisely speaking,  is not included in \cite{bena, tak}.
We basically follow the argument in \cite{tak} to prove this theorem,
but we believe that our proof is simpler and more readable
for reasons that will be specified later (Remark \ref{re.hikaku}).

Thanks to recent developments of the stochastic parallel 
transport on sub-Riemannian manifolds, 
we can construct the $\Delta/2$-diffusion process on $M$
as a strong solution to an SDE of Eells-Elworthy-Malliavin type.
Since the solution is non-degenerate in the sense of Malliavin calculus,
a standard localization procedure for heat kernels works.
Thus, our asymptotic problem on $M$ reduces 
to one on ${\mathbb R^d}$.
(The reason why it suffices to consider the driftless case 
in Theorem \ref{tm.R^d} is as follows.
The SDE corresponding to $\Delta/2$ on $M$
and its localized version
have a drift term, 
but it can be dealt with by Girsanov's theorem
fortunately.
Hence, our asymptotic problem reduces to the driftless case.)

In our view, (possible) advantages of 
the probabilistic approach to analytic problems 
on sub-Riemannian manifolds are as follows.
(For more information on this approach,
see Thalmaier's recent survey \cite{thal}.)
Unlike in the elliptic (i.e., Riemannian) case, 
analytic methods (in particular, the theory of 
pseudo differential operators)
does not work perfectly under a general 
bracket-generating condition
(except for the corank-one or the step-two case).
On the other hand, 
Malliavin calculus works under a general 
bracket-generating condition and 
the step of the condition does not really matter. 
Therefore, there seems to be a good chance 
that probability theory turns out to be more powerful 
than analysis
at least for certain problems in sub-Riemannian geometry.

Merits of using Watanabe's version of Malliavin calculus in 
sub-Riemannian geometry could be as follows.
First, it is probably the most powerful among a few versions
of Malliavin calculus.
In particular, it has a nice asymptotic theory.
Second, it is highly self-contained.
(For example, existence of the heat kernel can be shown
within this theory and the heat kernel is expressed  
by a generalized Feynman-Kac formula. See Section 6.)
This aspect of Watanabe's theory has not been paid 
much attention in the Riemannian case, 
probably because properties of many important objects on Riemannian manifolds were already obtained
by analytic methods and one could just borrow them.
On sub-Riemannian manifolds, however, 
analysis has not been fully developed.
Hence, there is a possibility that the self-containedness 
will turn out to be of great advantage in the future development of this research topic.

The organization of this paper is as follows.
In Section 2, a very brief review of 
Watanabe's distributional Malliavin calculus is given.
In Section 3, the free nilpotent groups/algebras 
and canonical diffusion processes on them are introduced.
These processes  approximate the diffusion process we 
actually investigate. 
In Section 4, we summarize many non-trivial
 properties of 
vector fields on ${\mathbb R}^d$ that satisfy the (equiregular)
H\"ormander condition.
The main purpose 
of Section 5 is to present and prove our key theorem on ${\mathbb R}^d$
(Theorem \ref{tm.R^d})
by using Malliavin calculus.
This theorem is a ``uniform version" of the main result in \cite{tak}
and can also be considered as a 
``Watanabe distribution version" of the main result in \cite{bena}.
In Section 6, we prove our main theorem (Theorem \ref{tm.main})
by showing a uniform asymptotic expansion 
of the heat kernel on a compact 
equiregular sub-Riemannian manifold $M$.
By localization and Girsanov's theorem, the proof of this fact
is reduced to that of the Euclidean case (Theorem \ref{tm.R^d}).
In Section 7, we give explicit expressions of 
the leading constants of the asymptotic expansions 
for some special examples of sub-Riemannian manifold.

In a paper of this kind, the term {\it distribution} may have 
three different meanings:
{\rm (i)}~A subbundle of the tangent bundle of a manifold 
(e.g. Martinet distribution, contact distribution).
{\rm (ii)}~A generalized function 
(e.g. Schwartz distribution, Watanabe distribution).
{\rm (iii)}~
A probability measure, in particular, the law of a random variable
(e.g. normal distribution,  chi-squared distribution).
We use this term only for {\rm (i)} and {\rm (ii)} in this paper.
Since {\rm (i)} and {\rm (ii)} are very different, we believe 
there is no risk of confusion.


\section{Preliminaries from Malliavin calculus}
Let $W =C_0 ([0,1], {\mathbb R}^n)$ be the set
of continuous functions from $[0,1]$ to ${\mathbb R}^n$
which start at $0$. This is equipped with the usual sup-norm.
The $n$-dimensional Wiener measure on $W$ is denoted by ${\mathbb P}$.
We denote by 
\[
H
=
\Bigl\{  h \in W
~\Big\vert~
 \mbox{absolutely continuous and } 
\|h\|^2_{H} :=\int_0^1 |h^{\prime}_s|^2 ds <\infty
\Bigr\}
\]
the Cameron-Martin subspace of $W$.
The triple $(W, H, {\mathbb P})$ is called the classical Wiener space.
The canonical realization on $W$
of $n$-dimensional
Brownian motion is denoted by 
$(w_t)_{0 \le t \le 1}=(w_t^1, \ldots, w_t^n)_{0 \le t \le 1}$.

We recall Watanabe's theory of 
generalized Wiener functionals (i.e., Watanabe distributions) in Malliavin calculus.
Most of the contents and the notations
in this section are contained in Sections V.8--V.10, Ikeda and Watanabe \cite{iwbk}
with trivial modifications.
We also refer to
Shigekawa \cite{sh}, Nualart \cite{nu},  Hu \cite{hu}
and Matsumoto and Taniguchi \cite{mt}.

The following are of particular importance in this paper:

\medskip

{\bf (a)}~ Basics of Sobolev spaces:
We denote by ${\mathbb D}_{p,r} ({\mathcal X})$ 
 the Sobolev space of ${\mathcal X}$-valued 
(generalized) Wiener functionals, 
where $p \in (1, \infty)$, $r \in {\mathbb R}$, and ${\mathcal X}$ is a real separable Hilbert space.
As usual, we will use the spaces 
${\mathbb D}_{\infty} ({\mathcal X})= \cap_{k=1 }^{\infty} \cap_{1<p<\infty} {\mathbb D}_{p,k} ({\mathcal X})$, 
$\tilde{{\mathbb D}}_{\infty} ({\mathcal X}) 
= \cap_{k=1 }^{\infty} \cup_{1<p<\infty}  {\mathbb D}_{p,k} ({\mathcal X})$ of test functionals 
and  the spaces ${\mathbb D}_{-\infty} ({\mathcal X}) = \cup_{k=1 }^{\infty} \cup_{1<p<\infty} {\mathbb D}_{p,-k} ({\mathcal X})$, 
$\tilde{{\mathbb D}}_{-\infty} ({\mathcal X}) = \cup_{k=1 }^{\infty} \cap_{1<p<\infty} {\mathbb D}_{p,-k} ({\mathcal X})$ of 
 Watanabe distributions as in \cite{iwbk}.
When ${\mathcal X} ={\mathbb R}$, we simply write ${\mathbb D}_{p, r}$, etc.
The ${\mathbb D}_{p, r} ({\mathcal X})$-norm is denoted by $\| \,\cdot\, \|_{p,r}$.
The precise definition of an asymptotic expansion 
up to any order in these spaces can be 
found in \cite[Section V-9]{iwbk}.
We denote by $D$  the gradient operator ($H$-derivative) 
and by $L = -D^* D$ the Ornstein-Uhlenbeck operator.

\medskip

{\bf (b)}~ Meyer's equivalence of Sobolev norms:
See \cite[Theorem 8.4]{iwbk}. 
A stronger version can be found in \cite[Theorem 4.6]{sh},  
\cite[Theorem 1.5.1]{nu} or Bogachev \cite[Theorem 5.7.1]{bog}.
It states that the Sobolev norms 
$\| F \|_{p,k} = \| (I -L)^{k/2} F \|_{L^p}$ and 
$\|  F \|_{L^p} + \| D^k F \|_{L^p}$ are equivalent for every $k \in {\mathbb N}$
and $1< p <\infty$.

\medskip

{\bf (c)}~Watanabe's pullback: 
For 
$F =(F^1, \ldots, F^m) \in {\mathbb D}_{\infty} ({\mathbb R}^m)$, 
we denote by $\sigma [F ](w) = \sigma_F (w) $
the Malliavin covariance matrix of $F$,
whose $(i,j)$-component is given by 
$\sigma_F^{ij} (w) =  \la DF^i (w),DF^j (w)\ra_{H^*}$.
Now we assume that $F$ is non-degenerate in the sense of Malliavin,
that is, 
$(\det \sigma [F])^{-1} \in L^p$ for every $1 <p<\infty$.

Then,  pullback 
$T \circ F =T(F)\in \tilde{\mathbb D}_{-\infty}$ 
of a tempered Schwartz distribution $T \in {\mathcal S}^{\prime}({\mathbb R}^m)$
on ${\mathbb R}^m$
by a non-degenerate Wiener functional $F \in {\mathbb D}_{\infty} ({\mathbb R}^m)$
is well-defined and has nice properties. 
(See \cite[Section V-9]{iwbk}.)
The key to justify this pullback is an integration by parts formula 
in the sense of Mallavin calculus.
(Its generalization is given in Item {\bf (d)} below.)

\medskip

{\bf (d)}~A generalized version of the integration by parts formula in the sense 
of Malliavin calculus
 for Watanabe distribution,
which is given as follows (see \cite[p. 377]{iwbk}):

For a non-degenerate Wiener functional
$F =(F^1, \ldots, F^m) \in {\mathbb D}_{\infty} ({\mathbb R}^m)$, 
we denote by $\gamma^{ij}_F (w)$ the $(i,j)$-component of the inverse matrix $\sigma^{-1}_F$.
Note that $\sigma^{ij}_F \in {\mathbb D}_{\infty} $ and
$D \gamma^{ij}_F =- \sum_{k,l} \gamma^{ik}_F ( D\sigma^{kl}_F ) \gamma^{lj}_F $.
Hence, derivatives of $\gamma^{ij}_F$ can be written in terms of
$\gamma^{ij}_F$'s and the derivatives of $\sigma^{ij}_F$'s.
Suppose 
$G \in {\mathbb D}_{\infty}$ and $T \in {\mathcal S}^{\prime} ({\mathbb R}^m)$.
Then, the following integration by parts holds:
\begin{align}
{\mathbb E} \bigl[
(\partial_i T \circ F )  G 
\bigr]
=
{\mathbb E} \bigl[
(T \circ F )  \Phi_i (\, \cdot\, ;G)
\bigr]
\qquad
(1 \le i \le m),
\label{ipb1.eq}
\end{align}
where ${\mathbb E}$ stands for the generalized expectation 
and $\Phi_i (w ;G) \in  {\mathbb D}_{\infty}$ is given by 
\begin{equation}\label{eq.ibpPHI}
\Phi_i (w ;G) =
\sum_{j=1}^m  D^* \Bigl(   \gamma^{ij }_F(w)  G (w) DF^j(w)
\Bigr).
\end{equation}

\medskip

{\bf (e)}~
Watanabe's asymptotic expansion theorem is a key theorem in his distributional Malliavin calculus, which can be found
 in \cite[Theorem 9.4, pp. 387-388]{iwbk}.
It can be summarized as follows:

Let $F(\ve , \,\cdot\,) \in {\mathbb D}_{\infty} ({\mathbb R}^m)$ for $0<\ve \le 1$.
We say $F(\ve , \,\cdot\,)$ is uniformly non-degenerate in the sense of Malliavin if 
\[
\sup_{0< \ve \le 1}  \|  
 \bigl( \det \sigma [DF (\ve, \,\cdot\,) ]
\bigr)^{-1}
 \|_{L^p}  <\infty
\qquad
\mbox{ for every $1<p<\infty$.}
\]

Let us assume that 
$F(\ve, \,\cdot\,) \in {\mathbb D}_{\infty} ({\mathbb R}^m) ~(0 < \ve \le 1)$ 
 is uniformly non-degenerate in the sense of Malliavin
 and 
admits the following asymptotic expansion:
\[
F(\ve, \,\cdot\,) \sim  f_0 + \ve^1  f_1 +\ve^2 f_2    + \cdots   
\qquad
\mbox{in ${\mathbb D}_{\infty} ({\mathbb R}^m)$ as $\ve \searrow 0$}
\]
with $f_j \in {\mathbb D}_{\infty} ({\mathbb R}^m)$ for all $j \in {\mathbb N}$.  
Then, for any $T \in {\mathcal S}' ({\mathbb R}^m)$,
$T \circ F(\ve , w)$ admits the following asymptotic expansion:
\begin{equation}\label{asymp.WAT}
T \circ F(\ve, \,\cdot\,) \sim  \psi_0 + \ve^1 \psi_1 +\ve^2 \psi_2 + \cdots   
\qquad
\mbox{in $\tilde{{\mathbb D}}_{-\infty}$ as $\ve \searrow 0$,}
\end{equation}
where $\psi_j \in \tilde{{\mathbb D}}_{- \infty}$ 
is given by the formal Taylor expansion.
(For example, $\psi_0 = T(f_0)$ and $\psi_{1} =\sum_{i=1}^m f_1^i 
\cdot (\partial T / \partial x^i )(f_0)$, etc.)


\section{Free nilpotent Lie group and lift of Brownian motion}
In this section we introduce 
the free nilpotent Lie groups 
and algebras, following Friz-Victoir \cite[Chapter 7]{fv}. 
The set of iterated integrals (i.e., multiple Wiener integrals)
of Brownian motion becomes
a left-invariant hypoelliptic diffusion process on this Lie group.
See \cite{ya} for example.
(According to \cite{tak2},
a similar fact also holds for the iterated integrals
of a continuous local semimartingale.)
The logarithm of this process will play a major role 
since it approximates the diffusion process under investigation
 in short times.

Let $N \ge 1$, which  is the step of nilpotency. 
We denote by $T^N ({\mathbb R}^n) =\oplus_{i=0}^N ( {\mathbb R}^n)^{\otimes i}$ 
the truncated tensor algebra of step $N$,
where $( {\mathbb R}^n )^{\otimes 0} ={\mathbb R}$ by convention.
The dilation by $c \in {\mathbb R}$ is denoted by $\Delta_c^N$,
that is, $\Delta_c^N (1, a_1, \ldots, a_N) = (1, c^1 a_1, \ldots, c^N a_N)$.
For $N \le M$, 
$\Pi_{N}^M$ denotes the canonical projection from
$T^M ({\mathbb R}^n) $ onto $T^N ({\mathbb R}^n)$.

We write ${\frak t}^N ({\mathbb R}^n)=\{ 0+ A 
\mid  0 \in {\mathbb R}, \,
A \in  \oplus_{i=1}^N ( {\mathbb R}^n)^{\otimes i} \}$.
This is a Lie algebra under the bracket $[A, B] := A \otimes B - B\otimes A$.
Then, 
$1+{\frak t}^N ({\mathbb R}^n)
=\{ 1+ A 
\mid 1 \in {\mathbb R}, \, A \in  \oplus_{i=1}^N ( {\mathbb R}^n)^{\otimes i} \} 
= \exp ({\frak t}^N ({\mathbb R}^n))$
is a Lie group. 
The unit element is denoted by ${\bf 1}$. 
Here, $\exp =\exp_N:{\frak t}^N ({\mathbb R}^n) \to 1+{\frak t}^N ({\mathbb R}^n)$ 
is the exponential map defined in the usual way.
Its inverse is the logarithm map $\log =\log_N$.
By the correspondence 
$1+A \mapsto A \in \oplus_{i=1}^N ( {\mathbb R}^n )^{\otimes i}$,
$1+{\frak t}^N ({\mathbb R}^n)$ is  diffeomorphic to the linear space 
$\oplus_{i=1}^N ( {\mathbb R}^n)^{\otimes i} \cong {\frak t}^N ({\mathbb R}^n)$.
This map gives a (global) chart on this group.

The free nilpotent Lie algebra of step $N$
is denoted by ${\frak g}^N ({\mathbb R}^n)$, which is a 
sub-Lie algebra of ${\frak t}^N ({\mathbb R}^n)$ 
generated by the elements of ${\mathbb R}^n$.
More precisely, 
\[
{\frak g}^N ({\mathbb R}^n) 
:= {\mathbb R}^n \oplus [{\mathbb R}^n, {\mathbb R}^n] \oplus 
\cdots 
 \oplus
 \underbrace{
[[ [{\mathbb R}^n, {\mathbb R}^n], \ldots], 
{\mathbb R}^n
]
 }_{(N-1) {\rm brackets}}.
 \]
The set
$G^N ({\mathbb R}^n) 
   = \exp ({\frak g}^N ({\mathbb R}^d))$ is called the free nilpotent 
   Lie group of step $N$.
   It is a sub-Lie group of  $1+{\frak t}^N ({\mathbb R}^n)$.
Note that  $\log: G^N ({\mathbb R}^n) \to {\frak g}^N ({\mathbb R}^n)$
is a diffeomorphism and its inverse is the exponential map 
$\exp: {\frak g}^N ({\mathbb R}^n) \to G^N ({\mathbb R}^n)$.
Using this diffoemorphism, we can define a new
group product on ${\frak g}^N ({\mathbb R}^n)$ by 
\[
A \times B := \log ( \exp(A) \exp (B) )
\qquad
(A, B \in {\frak g}^N ({\mathbb R}^n)).
\]
Thanks to the Baker-Campbell-Hausdorff formula,
the right-hand side has an explicit expression:
\begin{align*}
 \log ( \exp(A) \exp (B) ) =& A +B +\frac12 [A,B] 
 \\
& \qquad +
 \frac{1}{12} ( [A, [A,B]] +  [B, [B,A]]  ) 
-
 \frac{1}{24}  [B,  [A, [A,B]]]
 +\cdots 
  \end{align*}
Here, terms of degree greater than $N$ should be neglected.
This is in fact a finite sum due to nilpotency
and hence is a well-defined Lie polynomial in $A$ and $B$.

Now we fix some symbols for linear basis on free nilpotent
 Lie algebra and words.
The canonical basis of ${\mathbb R}^n$ is denoted by 
$\{{\bf e}_i \mid 1 \le i \le n\}$.
Set ${\cI} (N) =
\cup_{k=1}^N  \{ (i_1, \ldots, i_k) \mid  1 \le  i_1, \ldots, i_k \le n\} $
for 
$1 \le N \le \infty$.
This is the set of words of $n$ letters with length at most $N$, 
where the length of a word is defined by
$| (i_1, \ldots, i_k) | =:k$.
For $I =   (i_1, \ldots, i_k) \in {\cI} (\infty)$,
we set 
${\bf e}_I = {\bf e}_{i_1} \otimes \cdots \otimes   {\bf e}_{i_k}$.
When $k=1$, we will often write $i_1$ for $(i_1)$.
For $I =   (i_1, \ldots, i_k) \in {\cI} (\infty)$,
we define ${\bf e}_{[I]}$ as follows:
\[
{\bf e}_{ [i_1]} :={\bf e}_{ i_1}, \quad 
{\bf e}_{ [i_1, \ldots, i_k] }:= [ {\bf e}_{ [i_1, \ldots, i_{k-1}]} ,  {\bf e}_{ i_k}] \quad (k \ge 2).
\]
Here and in what follows, we write 
$[i_1, \ldots, i_k]$ for $[(i_1, \ldots, i_k)]$ for simplicity of notations.

Let $\cG (N) \subset {\cI} (N) ~(N=1,2,\ldots)$ be such that 
$\cG (N) \subset \cG (N+1)$ for all $N \ge 1$
and
$\{{\bf e}_{[I]}  \mid  I \in  \cG (N) \}$ forms a linear basis of 
${\frak g}^N ({\mathbb R}^n)$.  
The choice of such
$\cG (N) ~(N=1,2,\ldots)$ is not unique.
We write $\cG (\infty) =\cup_{k=1}^{\infty} \cG (k)$.
(For example, we can take
$\cG (1) =\{ (i) \mid 1 \le i \le n\}$
and 
$\cG (2) = \cG (1)  \cup  \{ (i,j) \mid 1 \le i <j \le n\}$.)

Now we introduce vector fields on the Lie group and the Lie algebra.
Note that 
${\bf e}_i \in {\mathbb R}^n \subset {\frak t}^N ({\mathbb R}^n) 
\cong T_{{\bf 0}} {\frak t}^N ({\mathbb R}^n)$.
Here, since ${\frak t}^N ({\mathbb R}^n)$ is a linear space,
it is identifed with its tangent space at the origin.
Since 
$1+{\frak t}^N ({\mathbb R}^n)$ and $G^N ({\mathbb R}^n)$
are submanifolds of a linear space 
$T^N ({\mathbb R}^n)$,
their tangent space can naturally be identified 
with a linear subspace of $T^N ({\mathbb R}^n)$.
By straightforward computation, 
\[
\exp_* {\bf e}_i :=
\left. \frac{d}{ds}\right|_{s=0} \exp (s {\bf e}_i) 
= {\bf e}_i \in  T_{{\bf 1}} ({\frak g}^N ({\mathbb R}^n) ) 
\cong {\frak g}^N ({\mathbb R}^n).
\]

Let $Q_i^N$ be the unique left-invariant vector field on 
$1+{\frak t}^N ({\mathbb R}^n)$ or on $G^N ({\mathbb R}^n)$ 
such that
$Q_i^N ( {\bf 1})= {\bf e}_i$  ($1 \le i \le n$).
More concretely, 
\[
Q_i^N (A) = \left. \frac{d}{ds}\right|_{s=0} A \otimes \exp (s {\bf e}_i)
\qquad
(A \in 1+{\frak t}^N ({\mathbb R}^n)).
\]
The above limit is taken in $T^N ({\mathbb R}^n)$.
If we choose $\{ {\bf e}_I \mid  I \in {\cI}(N) \}$
as a basis of ${\frak t}^N ({\mathbb R}^n)$,
an element of this linear space can be expressed as
$(y^I)_{I \in {\cI}(N)} \in {\mathbb R}^{{\cI}(N)}$.
In this coordinate we have 
\[
Q_i^N (A)= \frac{\partial }{\partial y^i} + 
\sum_{(j_1, \ldots, j_k) \in {\cI}(N-1)}
y^{(j_1, \ldots, j_k)} 
\frac{\partial }{\partial y^{(j_1, \ldots, j_k, i)} }
\qquad
(A = 1 + \sum_{I \in {\cI}(N)} y^I {\bf e}_I  \in 1+{\frak t}^N ({\mathbb R}^n))
\]
for $N \ge 2$. See \cite[p. 174]{tak}.
As vector fields on $G^N ({\mathbb R}^n)$,
$\{ Q_i^N\}_{1 \le i \le n}$ satisfy H\"ormander's bracket-generating condition 
at ${\bf 1}$ and hence at
every point in $G^N ({\mathbb R}^n)$ by the left-invariance.

Define $\hat{Q}_i^N = \log_* Q_i^N$. 
Then, 
$\{ \hat{Q}_i^N\}_{1 \le i \le n}$ are smooth vector fields on 
${\frak t}^N ({\mathbb R}^n)$
and 
satisfy the H\"ormander condition  as vector fields on ${\frak g}^N ({\mathbb R}^n)$ at
every point in ${\frak g}^N ({\mathbb R}^n)$. 
By way of construction, 
these are left-invariant with respect to the product $\times$.
The Baker-Campbell-Hausdorff formula implies that, if we write 
\[
 \hat{Q}_i^N (A) = \sum_{I \in {\cG}(N)}   
 (\hat{Q}_i^N)^I (A)  \frac{\p}{\p y^I}
 \qquad
(A =  \sum_{I \in {\cG}(N)} y^I {\bf e}_{[I]}  \in {\frak g}^N ({\mathbb R}^n)),
  \]
then the coefficient 
$(\hat{Q}_i^N)^I$ is actually a real-valued polynomial in $(y^I)_{I \in {\cG}(N)}$.

If $N=3$ for example,  we have for 
$A = \sum_{I \in {\cG}(3)}  y^I {\bf e}_{[I]}   \in {\frak g}^3 ({\mathbb R}^n)$ that
\[
\hat{Q}_i^3 (A) = \left. \frac{d}{ds}\right|_{s=0} A \times  (s {\bf e}_i)
=
 {\bf e}_i +\frac12 [A, {\bf e}_i]  + \frac{1}{12} [A, [A, {\bf e}_i] ],
 \]
which is a second order polynomial in  $(y^I)_{I \in {\cG}(3)}$.
Here, the linear space ${\frak g}^3 ({\mathbb R}^n)$
and its tangent space are identified in the usual way.

Consider the following  ODE on $G^N ({\mathbb R}^n)$
driven by an ${\mathbb R}^n$-valued Cameron-Martin path $h \in H$:
\begin{equation}\label{eq.odeG^N}
dy_t^N  = \sum_{i=1}^n Q_i^N ( y^N_t) dh_t^i
\qquad
\mbox{with $y_0^N ={\bf 1}$.}
\end{equation}
It is well-known that
a unique solution of (\ref{eq.odeG^N}) has the following explicit expression 
in the form of iterated path integrals (e.g., \cite[Chapter 7]{fv}): 
\[
y_t^N  = y_t^N (h) =\sum_{I \in {\cI}(N)}  h^I_t {\bf e}_I,
\]
where we set
\[
h_t^{(i_1)} := h_t^{i_1}, \qquad
h_t^{(i_1, \ldots, i_k)} := \int_0^t  h_s^{(i_1, \ldots, i_{k-1})}  dh_s^{i_k} 
\qquad
(k \ge 2).
\]
In rough path theory, 
$y^N$ is called the level $N$ rough path lift of $h$.
By (a trivial modification of) \cite[Theorem 7.30]{fv},
we have $G^N ({\mathbb R}^n) = \{ y^N_T (h) \mid h \in H\}$ for every $T>0$.

The corresponding Stratonovich-type SDE on $G^N ({\mathbb R}^n)$
driven by a $n$-dimensional Brownian motion $w$ is as follows:
\begin{equation}\label{eq.sdeG^N}
dY_t^N  = \sum_{i=1}^n Q_i^N ( Y_t^N) \circ dw_t^i
\qquad
\mbox{with $Y^N_0 ={\bf 1}$.}
\end{equation}
A unique solution of (\ref{eq.sdeG^N}) has the following explicit expression 
in the form of iterated Stratonovich integrals: 
\[
Y_t^N = Y_t^N (w) =\sum_{I \in {\cI}(N)}  w^I_t {\bf e}_I, \qquad 
\mbox{a.s.,}
\]
where we set
\[
w_t^{(i_1)} := w_t^{i_1}, \qquad
w_t^{(i_1, \ldots, i_k)} := \int_0^t  w_s^{(i_1, \ldots, i_{k-1})}  \circ dw_s^{i_k} 
\qquad
(k \ge 2).
\]
In rough path theory, $Y^N$ is called the level $N$ rough path lift of $w$
or Brownian rough path of level $N$.
(In most of the cases in rough path theory, $N=2$.)

Set $U_t^N = \log Y_t^N$.
We can easily see that
 $(U_t^N)$ is a diffusion process on ${\frak g}^N ({\mathbb R}^n)$
which satisfies the following Stratonovich-type SDE:
\begin{equation}\label{eq.sdeg^N}
dU_t^N  = \sum_{i=1}^n \hat{Q}_i^N ( U_t^N) \circ dw_t^i
\qquad
\mbox{with $U_0^N ={\bf 0}$.}
\end{equation}
Note that ({\rm i}) the processes $(\Delta_c^N  U_t^N)$
and 
$(U_{c^2 t}^N)$ have the same law for every $c \in {\mathbb R}$
(i.e., the scaling property)
and
 ({\rm ii}) $U_t^{N, I} (-w) = (-1)^{|I|} U_t^{N, I} (w)$ a.s.
for every $I \in \cG(N)$.
One can show these facts 
by first showing the counterparts for $(Y_t^N)$
and then taking the logarithm. 

Since $\{( U_t^N)_{t \ge 0}\mid N \ge 1\}$ are consistent with the projection system, that is,
$\Pi_{N}^{M} ( U_t^M)= U_t^N$ for $M \ge N$,
we have
$U_t^{N,I} =U_t^{M,I}$ if $|I| \le N \le M$.
Therefore, we may and will simply write $U_t^{I}$ for this object.

\begin{remark}\label{re.pol}
Before we apply Malliavin calculus to (\ref{eq.sdeg^N}),
we make a comment on the regularity of smooth coefficient  vector fields.
A standard assumption requires that 
all the derivatives of the coefficients of 
$(\hat{Q}_i^N)^I$ of order $\ge 1$ be bounded.
(However, this is not satisfied in our case). 

The main reason why this cannot be relaxed so easily 
is  
because a  solution of the SDE may explode in finite time 
without this kind of assumption.
However, if existence of a time-global solution is known for some reason,
then it is enough to assume that 
all the derivatives of the coefficients of  
$(\hat{Q}_i^N)^I$ are of at most polynomial growth.
Then, most of  standard results in Malliavin calculus for SDEs still hold.
(In our present case,  the coefficients of 
$(\hat{Q}_i^N)^I$ are literally  polynomials, as we have seen).

Precisely,  it suffices to check that 
\begin{equation}\label{ineq.ks1}
\sup_{0 \le t \le 1} 
\Bigl( \| U^N_t \|_{L^p} + \| \partial U^N_t \|_{L^p} 
+  \| (\partial U^N_t)^{-1} \|_{L^p}\Bigr) 
< \infty
\qquad
(1<p<\infty).
\end{equation}
Here, $\partial U^N$ is the Jacobian process (at ${\bf 0}$) associated 
with SDE (\ref{eq.sdeg^N}) and 
takes values in ${\rm GL} ({\frak g}^N ({\mathbb R}^n))$.
More explicitly,  if we denote by $U^N (t, A)$ 
the solution of SDE (\ref{eq.sdeg^N})  which starts 
at $A\in {\frak g}^N ({\mathbb R}^n)$,
then $\partial U^N_t := \nabla U^N (t, A) \vert_{A = {\bf 0} }$,
where $\nabla$ is the gradient operator with respect to $A$-variable
on ${\frak g}^N ({\mathbb R}^n)$.

The reason why this is sufficient is as follows:
The higher order $H$-derivatives $D^k U^N~(k=1,2,\ldots)$ 
can be written as a stochastic integral
which only involves $w, U^N, \partial U^N, (\partial U^N)^{-1}$
and 
$D U^N, D^2 U^N,  \ldots, D^{k-1} U^N$.
(See \cite[Section 6.1]{sh} for example.)
Due to this ``triangular structure" of  the integral expression,
verifying (\ref{ineq.ks1}) is enough.

Since $U^N_t = \log Y_t^N$, every component of $U^N_t$
is a polynomial in $w_t^{(i_1, \ldots, i_k)}~(1 \le k \le N)$,
$L^p$-norm of  $U^N_t$ clearly satisfies (\ref{ineq.ks1}).
By the left-invariance, 
we have $U^N (t, A)  =A \times U^N_t$.
Let $\{ {\bf e}_{[I]}  \mid  I \in \cG (N) \}$ be a basis of 
${\frak g}^N ({\mathbb R}^n)$
and arrange them in increasing order of the step number.
From the Baker-Campbell-Haudorff formula and straightforward computation,  
we can see that
$\partial U^N_t$ is represented 
with respect to this basis by an lower triangular matrix 
with all the diagonal entries being $1$.
Other non-zero entries of this matrix  are 
polynomials in $w_t^{(i_1, \ldots, i_k)}~(1 \le k \le N-1)$.
Therefore, $L^p$-norms of 
$\partial U^N_t$ and its inverse satisfy (\ref{ineq.ks1}).
\end{remark}

Let $\sigma [U_1^N] = (\la DU_1^{N,I},  DU_1^{N,J} \ra_H)_{I,J \in \cG (N)}$ 
be the Malliavin covariance matrix of $U_1$
and $\lambda  [U_1^N]$ its smallest eigenvalue.
(This means that ${\frak g}^N ({\mathbb R}^n)$ 
is implicitly equipped with an inner product with respect to which 
$\{ {\bf e}_{[I]} \mid I\in \cG (N) \}$ forms an orthonormal basis.)

\begin{proposition}\label{pr.MalU}
Let the notations be as above.
Then, $\lambda  [U_1^N] >0$  a.s. and 
$\lambda  [U_1^N]^{-1} \in \cap_{1<p <\infty} L^p$.
In particular, $U_1^N$ is non-degenerate in the sense of Malliavin.
\end{proposition}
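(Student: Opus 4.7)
The plan is to invoke the classical Malliavin--Kusuoka--Stroock non-degeneracy theorem for a hypoelliptic diffusion. In its textbook form, this theorem asserts that the solution $X_t$ of an SDE driven by smooth vector fields $V_0,V_1,\ldots,V_n$ (with bounded derivatives of all orders) has a non-degenerate Malliavin covariance with $\lambda[X_t]^{-1}\in\cap_{1<p<\infty} L^p$ for every $t>0$, provided $\{V_1,\ldots,V_n\}$ satisfies H\"ormander's condition at the starting point; see e.g.\ Section V-10 of \cite{iwbk} or Section 2.3 of \cite{nu}.

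In our setting, SDE \eqref{eq.sdeg^N} has no drift and is driven by $\{\hat Q_i^N\}_{i=1}^n$, which was already observed in the discussion preceding the proposition to satisfy H\"ormander's condition at $\mathbf 0\in\mathfrak g^N(\mathbb R^n)$ (this is immediate from the fact that $\log\colon G^N(\mathbb R^n)\to\mathfrak g^N(\mathbb R^n)$ is a diffeomorphism whose pushforward $\log_*$ is a Lie-algebra homomorphism on vector fields, so the iterated brackets of $\{\hat Q_i^N\}$ at $\mathbf 0$ are the images under $\log_*$ of the iterated brackets of $\{Q_i^N\}$ at $\mathbf 1$, which span $T_{\mathbf 1}G^N(\mathbb R^n)\cong\mathfrak g^N(\mathbb R^n)$ by the very definition of the free nilpotent Lie algebra). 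Consequently, the only issue in transferring the classical theorem to our situation is the unboundedness of the coefficients $(\hat Q_i^N)^I$, which are polynomials in $(y^I)_{I\in\cG(N)}$ with derivatives that are not uniformly bounded.

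Remark \ref{re.pol} records the standard fact that, for SDEs with polynomially growing coefficients admitting a global solution, the Malliavin-calculus machinery still operates provided the moment bound \eqref{ineq.ks1} holds. Moreover, that remark already verifies \eqref{ineq.ks1} in the present case, using the explicit representation of $U^N_t$ and $\partial U^N_t$ as polynomials in the iterated Stratonovich integrals $w^{(i_1,\ldots,i_k)}$ together with the lower-triangular-with-unit-diagonal structure of $\partial U^N_t$ with respect to a basis of $\mathfrak g^N(\mathbb R^n)$ arranged by increasing step. With \eqref{ineq.ks1} in hand, the standard proof of non-degeneracy (via Norris's lemma or the Kusuoka--Stroock integration by parts) carries over verbatim to SDE \eqref{eq.sdeg^N} and yields $\lambda[U_1^N]^{-1}\in\cap_{1<p<\infty} L^p$.

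The main obstacle is precisely bridging the gap between the textbook hypothesis (bounded coefficients with bounded derivatives of all orders) and the polynomial coefficients here. Once \eqref{ineq.ks1} is granted, the standard argument runs without modification; supplying \eqref{ineq.ks1} is exactly what Remark \ref{re.pol} accomplishes, so no additional non-trivial step is required.
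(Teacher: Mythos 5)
Your proposal is correct, but it bridges the gap between the textbook hypotheses and the polynomial coefficients of \eqref{eq.sdeg^N} by a different mechanism than the paper does. You work \emph{globally}: you take the moment bounds \eqref{ineq.ks1} established in Remark \ref{re.pol} for $U^N$, $\partial U^N$ and $(\partial U^N)^{-1}$ as a license to rerun the whole H\"ormander-condition non-degeneracy argument (Norris's lemma / Kusuoka--Stroock) for the polynomially-growing vector fields $\hat Q_i^N$, noting that every quantity appearing in that argument (iterated brackets evaluated along the path, their derivatives, the inverse Jacobian) then has all moments. The paper instead localizes \emph{first}: a standard stopping-time argument shows that the lower bound on $\lambda[U_1^N]$ only requires knowledge of the coefficient vector fields in a neighborhood of the starting point $\mathbf 0$, after which one may modify them outside a ball and land literally in the bounded-coefficient setting of Kusuoka--Stroock \cite{ks2}, with Remark \ref{re.pol} invoked only to guarantee that the reduction is legitimate. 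Your route avoids the cutoff but asks the reader to accept that the Norris-lemma estimates really do only consume the moments listed in \eqref{ineq.ks1} — a true but not entirely ``verbatim'' check, since Remark \ref{re.pol} is written to justify $U^N_t\in{\mathbb D}_\infty$ rather than the covariance lower bound; the paper's route keeps the citation to \cite{ks2} literally applicable at the cost of the (equally standard, equally unproved-in-detail) stopping-time reduction. Your identification of the H\"ormander condition at $\mathbf 0$ via $\log_*$ is correct and matches what the paper records in Section 3. Either argument is acceptable at the level of rigor the paper itself adopts.
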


\begin{proof}
By using a standard stopping time argument, 
we only need information of the coefficient vector fields 
near the starting point ${\bf 0}$.
Then, this problem reduces to the one under the standard 
regularity assumption in 
Malliavin calculus presented in Remark \ref{re.pol} above.
See Kusuoka-Stroock \cite{ks2} for example.
\end{proof}


We need the following estimate of
the exit probability: 
For every $\kappa >0$, there exists positive constants 
$C_{N, \kappa}, \hat{C}_{N, \kappa}$
such that
\begin{equation}\label{ineq.exitU}
{\mathbb P} \bigl(  \sup_{0 \le s \le t} | \Delta^N_\ve U^N_s | >\kappa  \bigr) 
 \le \hat{C}_{N, \kappa} \exp ( - C_{N, \kappa} / 4t\ve^2)
 \qquad
 \mbox{(if $0 < t\ve^2< C_{N, \kappa}$)}
\end{equation}
(\cite[p. 181]{tak}).
This follows form the scaling property for $U^N$ and 
a standard argument for the exit probablity for (local) semimartingales.

\begin{remark}\label{re.pr_consist}
Although $N$ is an arbitrarily fixed number in this section,
all the objects are in fact consistent 
with the system of projections $\{ \Pi_{N}^{M} \}_{M \ge N}$.
For example, 
$\Pi_{N}^{M} (G^M ({\mathbb R}^n)) =G^N ({\mathbb R}^n)$,
$\Pi_{N}^{M} ({\frak g}^M ({\mathbb R}^n)) 
={\frak g}^N ({\mathbb R}^n)$,
$\Pi_{N}^{M} \circ \exp_M = \exp_N \circ \Pi_{N}^{M}$,
$\Pi_{N}^{M} \circ \log_M = \log_N \circ \Pi_{N}^{M}$,
$(\Pi_{N}^{M})_* Q_i^M = Q_i^N$,
$\Pi_{N}^{M} ( U_t^M)= U_t^N$, etc.
The projections of course commute with 
the dilations, too.
This consistency indicates that 
these objects actually live in the projective limit spaces,
but we do not take this viewpoint in this paper.
 \end{remark}

\begin{remark}\label{re.hjk}
The Lie group product $A \times B$ on 
${\frak g}^N ({\mathbb R}^n)$ in \cite{tak}
equals
$B \times A$ in the present paper.
Concerning this, the vector field $\hat{Q}_i^N$ is 
{\it left-invariant} here, not right-invariant as in  \cite{tak}.
This modification 
is only for the aesthetic reason and
of no mathematical importance.
 \end{remark}

\begin{remark}\label{re.abc}
In many literatures $({\frak g}^N ({\mathbb R}^n), \times)$,
instead of $G^N ({\mathbb R}^n)$,
is called the free nilpotent {\it group} of step $N$.
See Cygan \cite{cy} for example.
 \end{remark}

%

\section{Vector fields on ${\mathbb R}^d$}
\label{sec:v.f.on.Rd}
In this section 
we discuss vector fields on  ${\mathbb R}^d$.
We fix notations and recall some basic facts for later use.
There are no new results in this section.
The set of all the vector fields on ${\mathbb R}^d$
is denoted by ${\frak X} ({\mathbb R}^d)$.
An element of ${\mathbb R}^d$
is denoted by $x= (x^1, \ldots, x^d)$ as usual.
The set of all linear mappings from a vector space ${\mathcal X}$ to 
another vector space ${\mathcal Y}$ is denoted by $\cL({\mathcal X}, 
{\mathcal Y})$.

For $V \in {\frak X} ({\mathbb R}^d)$, 
we write $V^i (x) = \la dx^i, V (x)\ra$ and hence
$V (x) =\sum_{i=1}^d V^i (x) \frac{\p}{\p x^i}$.
Note that 
a vector field is always regarded as a first order 
differential operator.
The ${\mathbb R}^d$-valued function 
$(V^1 (x), \ldots,V^d (x))^*$ is denoted by
$(V  {\rm Id})(x)$, where ${\rm Id}$ stands for the identity map of 
${\mathbb R}^d$.
Here and in what follows, the superscript $*$ stands 
for the transpose of a matrix.

For the rest of this section, let $n \ge 1$ and
$V_1, \ldots, V_n \in {\frak X}  ({\mathbb R}^d)$.
For $I =   (i_1, \ldots, i_k) \in {\cI} (\infty)$,
we set 
$V_I = V_{i_1}  V_{i_2}  \cdots   V_{i_k}$,
which is a differential operator of order $n$. 
We also set $V_{[I]} \in {\frak X}  ({\mathbb R}^d)$ as follows:
\[
V_{ [i_1]} =V_{ i_1}, \qquad 
V_{ [i_1, \ldots, i_k] }= 
[ V_{ [i_1, \ldots, i_{k-1}]} ,  V_{ i_k}] \qquad (k \ge 2).
\]
The correspondence ${\bf e}_{[I]} \mapsto V_{[I]}$
naturally extends to a Lie algebra homomorphism
from the free Lie algebra generated by ${\mathbb R}^d$
to $ {\frak X}  ({\mathbb R}^d)$.
In particular, every linear relation for
$\{ {\bf e}_{[I]} \mid  I \in  \cI (N) \}$
still holds for $\{ V_{[I]} \mid  I \in  \cI (N) \}$.

Now we give a simple lemma for later use.
This lemma is essentially implied by 
\cite[Corollary 2.3, Propositions 3.9 and 4.4]{tak}. 
Our proof below is, however, different from the one in  \cite{tak} 
and more straightforward and algebraic.
\begin{lemma}\label{lm.YtoU}
Let $N \ge 1$.
Then, for every $x \in {\mathbb R}^d$ and 
$u = \sum_{ J \in \cG (N)} u^J {\bf e}_{[J]} \in {\frak g}^N ({\mathbb R}^n)$,
\begin{eqnarray}
\label{eq.YtoU}
\sum_{I \in \cI (N)}  (V_{I} {\rm Id}) (x) 
 \pi_I \bigl(
 \exp (u)
\bigr)
=
\sum_{k=1}^N \frac{1}{k!}
\sum_{ |J_1| + \cdots + |J_k| \le N}  
( V_{[J_1]} \cdots  V_{[J_k]}  {\rm Id} )(x) u^{J_1} \cdots u^{J_k}
\end{eqnarray}
(the  summation  runs over 
 all $(J_1, \ldots, J_k) \in \cG (N)^k$ such that $|J_1| + \cdots + |J_k| \le N$).
Here, $\pi_{I}$ is the linear functional on $T^N ({\mathbb R}^n)$ 
that picks up the coefficient of ${\bf e}_I$ and 
$\exp$ is the exponential map 
from ${\frak g}^N ({\mathbb R}^n)$ to $G^N ({\mathbb R}^n)$.
\end{lemma}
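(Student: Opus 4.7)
The identity is essentially a PBW/Chen-type bookkeeping statement, asserting that the correspondence $\mathbf{e}_I \mapsto V_I$ extends to an associative algebra homomorphism from the tensor algebra into the algebra of differential operators on $\mathbb{R}^d$ (or rather its image after applying to $\operatorname{Id}$ and evaluating at $x$), and that this homomorphism sends the Lie expression $\mathbf{e}_{[J]}$ to $V_{[J]}$ by the very definition of the latter. My plan is to make this concrete in three steps.

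First, I would expand the left-hand side directly. In $T^N(\mathbb{R}^n)$ one has $\exp(u) = \sum_{k=0}^N u^k/k!$, and with $u = \sum_{J \in \cG(N)} u^J \mathbf{e}_{[J]}$,
\begin{equation*}
u^k = \sum_{(J_1,\dots,J_k) \in \cG(N)^k} u^{J_1}\cdots u^{J_k}\, \Pi_0^N\bigl(\mathbf{e}_{[J_1]}\otimes \cdots \otimes \mathbf{e}_{[J_k]}\bigr).
\end{equation*}
Substituting this and interchanging sums, the LHS of \eqref{eq.YtoU} becomes
\begin{equation*}
\sum_{k=1}^N \frac{1}{k!} \sum_{(J_1,\dots,J_k) \in \cG(N)^k} u^{J_1}\cdots u^{J_k} \sum_{I \in \cI(N)} (V_I \operatorname{Id})(x)\, \pi_I\bigl(\mathbf{e}_{[J_1]}\otimes \cdots \otimes \mathbf{e}_{[J_k]}\bigr).
\end{equation*}

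Second, I would analyze the innermost sum. Observe that $\mathbf{e}_{[J_1]}\otimes\cdots\otimes\mathbf{e}_{[J_k]}$, when fully expanded in the basis $\{\mathbf{e}_I\}$, is homogeneous of tensor degree $|J_1|+\cdots+|J_k|$. Hence if $|J_1|+\cdots+|J_k| > N$ the truncation kills it and the inner sum is $0$; if $|J_1|+\cdots+|J_k| \le N$, only $I$'s with $|I| = |J_1|+\cdots+|J_k|$ contribute and no truncation occurs. In this latter case, the inner sum is exactly the image of $\mathbf{e}_{[J_1]}\otimes\cdots\otimes\mathbf{e}_{[J_k]}$ under the linear map $\mathbf{e}_I \mapsto (V_I \operatorname{Id})(x)$, which is the specialization at $x$ of the associative algebra homomorphism $\varphi: T(\mathbb{R}^n) \to $ (differential operators on $\mathbb{R}^d)$ defined by $\varphi(\mathbf{e}_I) = V_I$.

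Third, since $\varphi$ is a homomorphism of associative algebras, it sends the commutator $[\,\cdot\,,\,\cdot\,]_\otimes$ to the commutator of differential operators; and the operator iterated bracket applied to $V_{j_1},\dots,V_{j_k}$ in the same pattern as $\mathbf{e}_{[J]}$ was defined from $\mathbf{e}_{j_1},\dots,\mathbf{e}_{j_k}$ is, by definition, $V_{[J]}$. Consequently $\varphi(\mathbf{e}_{[J]}) = V_{[J]}$ and, by multiplicativity, $\varphi(\mathbf{e}_{[J_1]}\otimes\cdots\otimes\mathbf{e}_{[J_k]}) = V_{[J_1]}\cdots V_{[J_k]}$. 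Applying this to $\operatorname{Id}$ and evaluating at $x$, the inner sum equals $(V_{[J_1]}\cdots V_{[J_k]}\operatorname{Id})(x)$, which matches the right-hand side of \eqref{eq.YtoU}. The only subtle point is keeping track of the truncation at level $N$ on the tensor side, and matching it with the length restriction $|J_1|+\cdots+|J_k| \le N$ on the right-hand side; this is the one place where bookkeeping matters, but it is forced by homogeneity of $\mathbf{e}_{[J_l]}$ in tensor degree.
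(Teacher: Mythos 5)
Your proof is correct and follows essentially the same route as the paper's: expand $\exp(u)$, interchange sums, use homogeneity of $\mathbf{e}_{[J]}$ in tensor degree to reconcile the truncation at level $N$ with the constraint $|J_1|+\cdots+|J_k|\le N$, and transfer relations among the $\mathbf{e}$'s to the $V$'s. The only difference is presentational: the paper carries out the $k=1$ and $k=2$ cases explicitly via the change-of-basis coefficients $\alpha_J^K$ with ${\bf e}_{[J]}=\sum_K \alpha_J^K {\bf e}_K$ and $V_{[J]}=\sum_K \alpha_J^K V_K$, whereas your appeal to the associative algebra homomorphism $\mathbf{e}_i\mapsto V_i$ from the (truncated) tensor algebra handles all $k$ uniformly and is a clean packaging of the same computation.
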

\begin{proof}
Let $\alpha_J^K \in {\mathbb R}$ be such that
\[
{\bf e}_{[J]} = \sum_{K\in \cI (N)} \alpha_J^K {\bf e}_{K}
\qquad
(J \in \cG (N)).
\]
Note that $\alpha_J^K =0$ if $|J| \neq |K|$.
Then, it holds that
\begin{equation}\label{eq.V[]}
V_{[J]} = \sum_{K \in \cI (N)} \alpha_J^K   V_{K}
\qquad
(J \in \cG (N)).
\end{equation}
The left-hand side of (\ref{eq.YtoU}) is equal to 
\[
\sum_{I \in \cI (N)}  (V_{I} {\rm Id}) (x) 
 \pi_I \Bigl(
 \sum_{k=0}^N \frac{1}{k!}
\bigl( 
\sum_{ J \in \cG (N)} u^J   
\sum_{K \in \cI (N)} \alpha_J^K {\bf e}_{K}
\bigr)^{\otimes k}
\Bigr),
\]
which is a polynomial in $u^I$'s.
Let us compute its $k$th order term.
For $k=0$, it vanishes since $|I| \ge 1$.
For $k=1$, we see from \eqref{eq.V[]} that
\begin{align*}
\sum_{I \in \cI (N)}  (V_{I} {\rm Id}) (x) 
 \pi_I \Bigl(
\sum_{ J \in \cG (N)} u^J   
\sum_{K \in \cI (N)} \alpha_J^K {\bf e}_{K}
\Bigr)
&=
\sum_{ J \in \cG (N)} u^J  
\sum_{K \in \cI (N)}    \alpha_J^K  (V_{K} {\rm Id}) (x)  
\\
&=
\sum_{ J \in \cG (N)} u^J   (V_{[J]} {\rm Id}) (x) .
\end{align*}
For $k \ge 2$, the computation gets a little bit complicated.
Let us consider the case $k=2$.
The concatenation of two words, $K_1$ and $K_2$, 
is denoted by $(K_1, K_2)$.
By summing over $I$ first, we see that
\begin{eqnarray}
\lefteqn{
\sum_{I \in \cI (N)}  (V_{I} {\rm Id}) (x) 
 \pi_I \Bigl(
\sum_{ J_1, J_2 \in \cG (N)} u^{J_1}   u^{J_2}
\sum_{K_1, K_2 \in \cI (N)} \alpha_{J_1}^{K_1}\alpha_{J_2}^{K_2} {\bf e}_{(K_1, K_2)}
\Bigr)
}
\label{eq.souji}\\
&=&
\sum_{J_1,  J_2 \in \cG (N)}  u^{J_1} u^{J_2}
\sum_{K_1, K_2 \in \cI (N), |K_1|+ |K_2| \le N}   \alpha_{J_1}^{K_1}  \alpha_{J_2}^{K_2} 
  (V_{K_1}V_{K_2} {\rm Id}) (x).
  \nn
  \end{eqnarray}
For any $l, m \ge 1$ with $l+m \le N$, we have by \eqref{eq.V[]} that
\[
\sum_{|K_1| =l}   \sum_{ |K_2| =m}   
\alpha_{J_1}^{K_1}  \alpha_{J_2}^{K_2} 
  (V_{K_1}V_{K_2} {\rm Id}) (x)
  =  (V_{[J_1]}V_{[J_2]} {\rm Id}) (x) \delta_{|J_1|,l}\delta_{|J_2|,m}.
  \]
 Hence, the left-hand side of (\ref{eq.souji}) is equal to
 \[
 \sum_{J_1, J_2 \in \cG (N), |J_1|+ |J_2| \le N}  
 u^{J_1} u^{J_2}
  (V_{[J_1]}V_{[J_2]} {\rm Id}) (x).
    \] 
 This proves the case for $k=2$. 
We can prove the case $k \ge 3$ essentially in the same way.
Thus, we have shown (\ref{eq.YtoU}). 
\end{proof}

Next we give two types of bracket-generating condition for the vector fields.
For $x \in {\mathbb R}^d$ and $k \ge 1$,
define $\cA_{k} (x)$ to be the linear span of
$\{ V_{[I]} (x) \mid  I \in \cI (k) \}$
in $T_x {\mathbb R}^d \cong {\mathbb R}^d$.
Note that it equals the linear span of
$\{ V_{[I]} (x) \mid  I \in \cG (k) \}$.

\vspace{4mm}
\noindent 
${\bf (HC)}_x$:~
We say that $\{V_1, \ldots, V_n\}$ satisfies the H\"ormander
condition at $x$ if there exists $N \ge 1$ such that
$\cA_{N} (x) ={\mathbb R}^d$.

\vspace{4mm}

The smallest number $N$ with this property is called 
the step of the H\"ormander condition at $x$
and denoted by $N_0 (x)$.
We set $\nu (x)= \sum_{k=1}^{N_0 (x)} k (\dim {\cA}_k (x) 
-  \dim {\cA}_{k-1} (x))$ with
${\cA}_{0} (x) :=\{0\}$ by convention.

\vspace{4mm}

\noindent 
${\bf (ER)}_x$:~
We say that $\{V_1, \ldots, V_n\}$ satisfies the 
equiregular H\"ormander
condition on $O \subset {\mathbb R}^d$ 
if {\rm (i)} it satisfies ${\bf (HC)}_x$ at every $x \in O$
and
{\rm (ii)} for all $k$, 
$\dim \cA_{k} (x)$ is constant in $x \in O$.
If the equiregular H\"ormander condition holds 
on some neighborhood of $x$, we simply say 
$\{V_1, \ldots, V_n\}$ satisfies the 
equiregular H\"ormander condition {\it near} $x$ 
and denote it by ${\bf (ER)}_x$.

\vspace{4mm}

Assume ${\bf (HC)}_x$ at some 
$x \in {\mathbb R}^d$. 
Then, we can find $\cH (x) \subset \cG (N_0 (x)) $ 
such that $\# \cH (x) =d$ and 
$\cA_{k} (x)$ equals the linear span of
$\{ V_{[I]} (x) \mid  I \in \cG (k) \cap \cH (x) \}$ for all 
$k =1,\ldots, N_0 (x)$.
Take $J \in \cG (N_0 (x))$
and write $V_{[J]} (x)$ as a unique linear combination
of $\{ V_{[I]} (x) \}_{I \in \cH (x)}$: 
\[
V_{[J]} (x) = \sum_{I \in \cH (x)}  c_J^I (x)  V_{[I]} (x).
\]
Then, we can immediately see from the definition 
of $\cH (x)$ that $c_J^I (x) =0$ is $|I| > |J|$.

Now we assume ${\bf (ER)}_{x_0}$ for $x_0  \in {\mathbb R}^d$.
Then, on a certain neighborhood $O$ of $x_0$,
we can choose $\cH (x)$ independently from $x$
and in that case we simply write $\cH$.
The linear subspace of ${\frak g}^{N_0} ({\mathbb R}^n)$
generated by $\{ {\bf e}_{[I]} \mid I \in \cH\}$
is denoted by ${\mathbb R} \la \cH\ra$.
Likewise, $N_0 (x)$ and $\nu (x)$ are independent of $x \in O$ 
and denoted by $N_0$ and $\nu$, respectively.
We will fix such $O$ for a while.

We introduce some linear maps 
for  each $x \in O$.
First, set $B_{\cH} (x) \in 
\cL ({\mathbb R} \la \cH \ra , {\mathbb R}^d)$
by
\begin{equation}\label{def.BcH}
B_{\cH}  (x) = \bigl(   V_{[I]}^i (x) \bigr)_{1 \le i \le d, I \in \cH},
\end{equation}
which is clearly invertible.
Next, set 
$\Gamma (x) 
=( \gamma^I_J (x))_{I \in \cH, J \in \cG (\infty) }
\in \cL ( {\frak g}^{\infty} ({\mathbb R}^n),
{\mathbb R} \la \cH\ra)$
by
\[
\Gamma (x) =B_{\cH}  (x)^{-1} 
\cdot
\bigl[
\bigl(   V_{[I]}^i (x) \bigr)_{1 \le i \le d, I \in \cG (\infty)} \bigr].
\]
Here, ${\frak g}^{\infty} ({\mathbb R}^n) 
(\cong {\mathbb R}^{\cG (\infty)})$ is the free Lie algebra
generated by ${\mathbb R}^n$.
Then, from  Lemma \ref{lm.la} below
and the fact that
$c_J^I (x) =0$ is $|I| > |J|$ for $I \in \cH$ and $J \in \cG (N_0)$, 
it follows that
\begin{equation}\label{eq.gam}
 \gamma^I_J (x)
 =
 \begin{cases} 
 \delta^I_J  & \mbox{if $J \in \cH$,} \\
 0 & \mbox{if $J \in \cG (N_0)$ and $|I| > |J|$.}
  \end{cases}
 \end{equation}
For $N \ge N_0$, we set
$\Gamma_N (x) 
=( \gamma^I_J (x))_{I \in \cH, J \in \cG (N) }
\in \cL ( {\frak g}^{N} ({\mathbb R}^n) ,  {\mathbb R}\la \cH \ra)$.
It immediately follows from  (\ref{eq.gam}) that
$\Gamma_N (x)\Gamma_N (x)^* \ge {\rm Id}_{\cH}$.


Here, we give a simple lemma on linear algebra in a general setting.
\begin{lemma} \label{lm.la}
Suppose that $\{ {\bf b}_1, \ldots, {\bf b}_d\}$ is a linear basis
of ${\mathbb R}^d$. 
Let ${\bf a}_1, \ldots, {\bf a}_m~(m \ge 1)$ be given by 
linear combinations of ${\bf b}_j$'s as follows:
\[
{\bf a}_k = \sum_{j=1}^d   c_k^j {\bf b}_j
\qquad
(1 \le k \le m).
\] 
Set an invertible matrix
$B=[{\bf b}_1, \ldots, {\bf b}_d]$
and a $d \times m$ matrix 
$C= (c_k^j)_{1 \le j \le d, 1 \le k \le m}$.
Then, we have
$$
B^{-1} [ {\bf b}_1, \ldots, {\bf b}_d, {\bf a}_1, \ldots, {\bf a}_m]
=
[ {\rm Id}_d | C]
$$
as a $d \times (d+m)$ matrix.
Here, ${\rm Id}_d$ stands for the identity matrix of size $d$.
\end{lemma}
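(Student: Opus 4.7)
The plan is to observe that the claimed identity is a direct book-keeping consequence of interpreting matrix multiplication column by column; no deeper structure is involved. First, since $\{{\bf b}_1,\ldots,{\bf b}_d\}$ is a basis of ${\mathbb R}^d$, the $d\times d$ matrix $B=[{\bf b}_1,\ldots,{\bf b}_d]$ is invertible. Second, the hypothesis ${\bf a}_k=\sum_{j=1}^d c_k^j {\bf b}_j$ says exactly that ${\bf a}_k$ equals $B$ applied to the vector $(c_k^1,\ldots,c_k^d)^*$, which is the $k$th column of $C$. Therefore, column-by-column, one has the matrix identities $[{\bf a}_1,\ldots,{\bf a}_m]=BC$ and $[{\bf b}_1,\ldots,{\bf b}_d]=B\cdot {\rm Id}_d$.

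Concatenating these two equalities horizontally yields $[{\bf b}_1,\ldots,{\bf b}_d,{\bf a}_1,\ldots,{\bf a}_m] = B\cdot [{\rm Id}_d\,|\,C]$ as a $d\times(d+m)$ matrix, and multiplying on the left by $B^{-1}$ gives the assertion. There is no genuine obstacle: the only thing to watch is the indexing convention, namely that the $k$th column of $C$ must list the coordinates of ${\bf a}_k$ in the ${\bf b}$-basis (so rows are indexed by $j$ and columns by $k$), which is precisely how $C=(c_k^j)$ is defined in the statement. With this convention in place, the lemma is immediate from the definitions.
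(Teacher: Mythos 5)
Your proof is correct and is essentially the same elementary computation as the paper's: the paper notes $B^{-1}{\bf b}_i={\bf e}_i$ and reads off the columns, while you equivalently factor the concatenated matrix as $B\,[{\rm Id}_d\,|\,C]$ before applying $B^{-1}$. No issues.
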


\begin{proof}
The proof is immediate if we note that 
$B^{-1} {\bf b}_i = {\bf e}_i$ for all $i$,
where $\{ {\bf e}_1, \ldots, {\bf e}_d\}$ is the canonical 
linear basis of ${\mathbb R}^d$. 
\end{proof}

We fix a few more notations for $N \ge 1$.
In this paragraph we do not assume {\bf (HC)}, {\bf (ER)}
nor $N \ge N_0 (x)$.
Set $B_{N}  \in 
C^{\infty}( {\mathbb R}^d,\cL ({\frak g}^{N} ({\mathbb R}^n) , {\mathbb R}^d))$
by
\begin{equation}\label{def.B}
B_{N}  (x) = \bigl(   V_{[I]}^i (x) \bigr)_{1 \le i \le d, I \in \cG (N)}.
\end{equation}
Next, define 
${\bf V}_{I_1, \ldots, I_N} 
=({\bf V}_{I_1, \ldots, I_N}^{ij})_{1 \le i,j \le d}
\in C^{\infty}( {\mathbb R}^d,
\cL ({\mathbb R}^{d},{\mathbb R}^d) )$
for 
$I_1, \ldots, I_N \in \cI (\infty)$ by
\begin{equation}\label{def.bfV}
{\bf V}_{I_1, \ldots, I_N}^{ij}
=
\frac{\p}{\p x^j} ( V_{[I_1]} \cdots V_{[I_N]}   x^i),
\end{equation}
where $x^i$ stands for the $i$th coordinate function 
$x \mapsto x^i$ on ${\mathbb R}^d$.
By convention we set ${\bf V}_{\emptyset} = {\rm Id}_d$.
It is obvious that
\begin{equation}\label{eq.VbfV}
V_{[I_1]} \cdots V_{[I_N]}   x^i =\sum_{j=1}^d 
V_{[I_1]}^j
{\bf V}_{I_2, \ldots, I_N}^{ij}
\end{equation}
for $N \ge 2$.
We also define 
$M_N
=(M_N^{ij})_{1 \le i,j \le d}
\in C^{\infty}( {\mathbb R}^d \times {\frak g}^N ({\mathbb R}^d),
\cL ({\mathbb R}^d, {\mathbb R}^d) )$
by
\begin{equation}\label{def.M}
M_N^{ij} (x,u) = \delta^i_j  +\sum_{k=1}^{N-1} \frac{1}{(k+1)!}
\sum_{I_1, \ldots, I_{k} \in \cG (N)}
{\bf V}_{I_1, \ldots, I_{k}}^{ij} (x)
u^{I_1} \cdots u^{I_{k}}
\qquad
(u = \sum_{I \in \cG (N)} u^I {\bf e}_{[I]}).
\end{equation}
Finally, set 
$F_N \in C^{\infty}( {\mathbb R}^d \times {\frak g}^N ({\mathbb R}^d),
{\mathbb R}^d )$ by
\begin{equation}\label{def.F}
F_N (x,u) =M_N (x,u) B_N (x) u.
\end{equation}

Let us assume ${\bf (ER)}_{x_0}$ again
and that $x$ is sufficiently close to $x_0$ and $N \ge N_0$.
It immediately follows that $M_N (x,0) = {\rm Id}_d$ and 
$(\p_I F_N^i (x, 0))_{1 \le i \le d, I \in \cG (N_0)}
 =B_{N_0}  (x)$.
Here, $\p_I$ is a shorthand for $\p /\p u^I$. 
Therefore, there exist a neighborhood 
$O_N$
of $x_0$
and positive constants 
$\kappa_N, r$ such that
if $|u| \le \kappa_N$ and $x \in O_N$,
then
\begin{equation}
\label{ineq.detM}
\det M_N (x,u) \ge \frac12,
\qquad
M_N (x,u)^*  M_N (x,u) \ge \frac12 {\rm Id}_d
\end{equation}
and
\begin{align}\label{ineq.qdrt}
\lefteqn{
(\p_I F_N^i (x, u))_{1 \le i \le d, I \in \cG (N_0)}
[ (\p_I F_N^i (x, u))_{1 \le i \le d, I \in \cG (N_0)}]^*
}
\\
\nonumber
&\qquad \ge
\frac12 
B_{N_0}  (x_0)  B_{N_0}  (x_0)^*
\ge
\frac12 
B_{\cH}  (x_0)  B_{\cH}  (x_0)^*
\ge
r {\rm Id}_d.
\end{align}

We continue to  assume ${\bf (ER)}_{x_0}$ and let $O_N$ be as above.
We define four linear maps 
for $N \ge N_0$, $0<\ve \le 1$ and $x \in O_N$ as follows:
\begin{align}
\label{def.gntn1}
\tilde{\Gamma}_N^{\ve} (x)
&= \bigl( 
\ve^{|J| -|I|}  \gamma^I_J (x)
\bigr)_{I \in \cH, J \in \cG (N)}
\quad \in \cL ({\frak g}^N ({\mathbb R}^n),  {\mathbb R} \la \cH\ra),
\\
\label{def.gntn2}
\tilde{\Gamma}_N^{0} (x)
&= \bigl( 
\delta_{|J|}^{|I|}  \gamma^I_J (x)
\bigr)_{I \in \cH, J \in \cG (N)}
\quad \in \cL ({\frak g}^N ({\mathbb R}^n),  {\mathbb R} \la \cH\ra),
\\
P_N &= 
\bigl( 
\delta_{J}^{I}  
\bigr)_{I \in  \cG (N) \setminus \cH, J \in \cG (N)}
\quad \in 
 \cL ( {\frak g}^N ({\mathbb R}^n),
  {\frak g}^N ({\mathbb R}^n)/{\mathbb R} \la \cH\ra),
\\
\Delta_{\ve}^{\cH} 
&= 
\bigl( 
\ve^{|J|}  \delta_{J}^{I}  
\bigr)_{I \in \cH, J \in \cH}
\quad \in 
\cL ( {\mathbb R} \la \cH\ra ,  {\mathbb R} \la \cH\ra).
\end{align}
Note that $\Delta_{\ve}^{\cH}$ is the dilation by $\ve$
restricted to ${\mathbb R} \la \cH\ra$
and that $P_N$ is just the canonical projection.
Via the inner product on $ {\frak g}^N ({\mathbb R}^n)$,
${\frak g}^N ({\mathbb R}^n)/{\mathbb R} \la \cH\ra$
is canonically identified with the orthogonal complement 
of ${\mathbb R} \la \cH\ra$ in ${\frak g}^N ({\mathbb R}^n)$.
In this way $P_N$ can be regarded as the orthogonal projection.
In fact, no negative power of $\ve$ is involved in 
the components of $\tilde{\Gamma}_N^{\ve} (x)$,
thanks to (\ref{eq.gam}).
By definition, we have 
$\tilde{\Gamma}_N^{0} (x) u
=\tilde{\Gamma}_{N_0}^{0} (x) \Pi_{N}^{N_0}  u$
for all $u \in {\frak g}^N ({\mathbb R}^n)$ and $N \ge N_0$.
The linear mapping 
\begin{equation}\label{def.clvec}
\begin{pmatrix}
\tilde{\Gamma}_N^{\ve} (x)  \\
P_N
\end{pmatrix}
\in 
 \cL ( {\frak g}^N ({\mathbb R}^d),
  {\frak g}^N ({\mathbb R}^d))
\end{equation}
will play an important role.

Now we give two simple lemmas for later use.
\begin{lemma}\label{lm.veLA}
Let the notations be as above and 
let $N \ge N_0$, $0<\ve \le 1$. 
Then, if we take $O_N$ small enough, we have the following:
\begin{enumerate}
\item
$\lim_{\ve\searrow 0} \tilde{\Gamma}_N^{\ve} (x)
= \tilde{\Gamma}_N^{0} (x)$
uniformly in $x \in O_N$.
\item
$\Gamma_N (x)\Delta_{\ve}^{N}
= \Delta_{\ve}^{\cH}  \tilde{\Gamma}_N^{\ve} (x)$ for all $x \in O_N$.
\item
$\det  \Delta_{\ve}^{\cH} = \ve^{\nu}$. In particular, 
$\Delta_{\ve}^{\cH}$ is invertible.
\item
$\tilde{\Gamma}_N^{0} (x) \tilde{\Gamma}_N^{0} (x)^*
\ge {\rm Id}_{{\mathbb R} \la \cH\ra}$ for all $x \in O_N$.
\item The linear mapping defined in (\ref{def.clvec}) 
is invertible and there exists a positive constant $r_N$ 
such that, for all $x \in O_N$,
\[
\begin{pmatrix}
\tilde{\Gamma}_N^{0} (x)  \\
P_N
\end{pmatrix}
\begin{pmatrix}
\tilde{\Gamma}_N^{0} (x)  \\
P_N
\end{pmatrix}^*
\ge 
r_N {\rm Id}_{ {\frak g}^N ({\mathbb R}^d) }.
\]
\item
There exists $\ve_0 =\ve_0 (N) \in (0,1]$ such that,
for all $\ve \in (0,\ve_0]$ and $x \in O_N$, 
\[
\tilde{\Gamma}_N^{\ve} (x) \tilde{\Gamma}_N^{\ve} (x)^*
\ge
\frac12 {\rm Id}_{{\mathbb R} \la \cH\ra},
\qquad
\begin{pmatrix}
\tilde{\Gamma}_N^{\ve} (x)  \\
P_N
\end{pmatrix}
\begin{pmatrix}
\tilde{\Gamma}_N^{\ve} (x)  \\
P_N
\end{pmatrix}^*
\ge 
\frac{r_N}{2}
 {\rm Id}_{ {\frak g}^N ({\mathbb R}^d) }.
\]
\end{enumerate}
\end{lemma}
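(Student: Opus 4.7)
The plan is to verify the six assertions in order, with (\ref{eq.gam}) and the defining property of $\cH$ as the central tools throughout. For item (1), the difference of the $(I,J)$ entries of $\tilde{\Gamma}_N^\ve(x)$ and $\tilde{\Gamma}_N^0(x)$ splits by comparison of $|I|$ and $|J|$: when $|J|=|I|$ the entries coincide; when $|J|>|I|$ the difference is $\ve^{|J|-|I|}\gamma^I_J(x)$, which tends to $0$ as $\ve\searrow0$; and when $|J|<|I|$ one has $|J|<|I|\le N_0$, so $J\in\cG(|J|)\subset \cG(N_0)$ and (\ref{eq.gam}) forces $\gamma^I_J(x)=0$, making the entries vanish in both matrices. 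Since the finitely many smooth functions $\gamma^I_J$ are bounded on the relatively compact $O_N$ (shrunk if necessary), the uniform convergence follows.

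Item (2) is a direct check at the level of matrix entries: $(\Gamma_N(x)\Delta_\ve^N)^I_J=\ve^{|J|}\gamma^I_J(x)$ on one side and $(\Delta_\ve^\cH\tilde{\Gamma}_N^\ve(x))^I_J = \ve^{|I|}\cdot\ve^{|J|-|I|}\gamma^I_J(x)=\ve^{|J|}\gamma^I_J(x)$ on the other, with the convention $\ve^{|J|-|I|}\gamma^I_J\equiv 0$ on the set $|I|>|J|$ where $\gamma^I_J$ vanishes by (\ref{eq.gam}). Item (3) is immediate from $\det\Delta_\ve^\cH=\prod_{I\in\cH}\ve^{|I|}=\ve^{\sum_{I\in\cH}|I|}$, combined with the identity $\#\{I\in\cH:|I|=k\}=\dim\cA_k-\dim\cA_{k-1}$ (a consequence of the defining property of $\cH$), which gives $\sum_{I\in\cH}|I|=\sum_{k=1}^{N_0}k(\dim\cA_k-\dim\cA_{k-1})=\nu$.

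For items (4) and (5), the key structural consequence of (\ref{eq.gam}) is that $\gamma^I_J=\delta^I_J$ for $J\in\cH$. Reordering the column index $J\in\cG(N)$ so that the $\cH$-indices come first, $\tilde{\Gamma}_N^0(x)$ takes the form $[\mathrm{Id}_\cH\mid R(x)]$ for some smooth matrix-valued function $R(x)$. Consequently $\tilde{\Gamma}_N^0(x)\,\tilde{\Gamma}_N^0(x)^* = \mathrm{Id}_\cH + R(x)R(x)^*\ge \mathrm{Id}_\cH$, establishing (4). In the same ordering,
\[
\begin{pmatrix}
\tilde{\Gamma}_N^{0}(x)\\
P_N
\end{pmatrix}
=
\begin{pmatrix}
\mathrm{Id}_\cH & R(x)\\
0 & \mathrm{Id}
\end{pmatrix}
\]
is upper triangular with unit diagonal, hence invertible with smooth inverse. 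Shrinking $O_N$ so that its closure is compact, this inverse is uniformly bounded in operator norm on $O_N$, which is equivalent to a uniform positive lower bound $r_N>0$ on the smallest eigenvalue of the product of the above matrix with its transpose; this yields (5).

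Finally, item (6) is a perturbation argument. By item (1), both $\tilde{\Gamma}_N^\ve(x)$ and the augmented matrix appearing in (5) converge uniformly on $O_N$ to their $\ve=0$ counterparts. Since $A\mapsto AA^*$ and the smallest eigenvalue of a symmetric matrix are continuous, the strict lower bounds obtained in (4) and (5) persist with constants halved for $\ve\in(0,\ve_0]$ once $\ve_0(N)$ is chosen small enough. No deep idea is involved; the entire proof is a careful unwinding of definitions, and the main obstacle is merely keeping the grading conventions and the definitions of $P_N$, $\Delta_\ve^\cH$, and $\cH$ straight while writing the formulas down.
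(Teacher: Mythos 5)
Your proof is correct and follows essentially the same route as the paper's (which simply declares (2), (3) obvious, derives (1) from the absence of negative powers of $\ve$ via (\ref{eq.gam}), writes $\tilde{\Gamma}_N^{0}(x)=[{\rm Id}_{{\mathbb R}\la\cH\ra}\,|\,*]$ for (4)--(5) with a compactness argument for $r_N$, and gets (6) by perturbation). You have merely filled in the details the paper leaves implicit, such as the case analysis on $|I|$ versus $|J|$ and the counting identity $\sum_{I\in\cH}|I|=\nu$, all of which are correct.
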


\begin{proof}
(2) and (3) are obvious.
By (\ref{eq.gam}) no component has a negative power of $\ve$,
from which (1) immediately follows.
Noting that $\tilde{\Gamma}_N^{0} (x) 
= [{\rm Id}_{{\mathbb R} \la \cH\ra} |\, *\,]$,
where 
$*$ is a certain smooth function in $x$,
we have (4).
In a similar way, we show (5). Obviously, 
\[
\begin{pmatrix}
\tilde{\Gamma}_N^{0} (x)  \\
P_N
\end{pmatrix}
=
\begin{pmatrix}
   {\rm Id}_{{\mathbb R} \la \cH\ra}   & * \\
{\bf 0} & 
{\rm Id}_{  {\frak g}^N ({\mathbb R}^d)  /{\mathbb R} \la \cH\ra} 
\end{pmatrix}
\]
is invertible for all $x \in O_N$.
A standard compactness argument implies existence 
of such a positive constant $r_N$. 
(6) is immediate from (1), (4), (5).
\end{proof}

\begin{lemma}\label{lm.nondegG}
Assume ${\bf (ER)}_{x_0}$ and $N \ge N_0$.
For convenience 
we set $Z^{\ve}_N (x)$ to be either  
$\tilde{\Gamma}_N^{\ve} (x) U_1^N$ or 
\[
\begin{pmatrix}
\tilde{\Gamma}_N^{\ve} (x)  \\
P_N
\end{pmatrix}
U_1^N.
\]
Then, there exist
a neighborhood $O_N$ of $x_0$ and 
$\ve_0 =\ve_0 (N) \in (0,1]$ such that $Z^{\ve}_N (x)$ is
non-degenerate in the sense of Malliavin
uniformly in $x \in  O_N$ and $\ve \in (0,\ve_0]$,
that is,
\[
\sup_{ x \in  O_N} \sup_{ 0 < \ve \le \ve_0}  
\| \sigma [ Z^{\ve}_N (x) ]^{-1} \|_{L^p}
<\infty
\qquad
(\mbox{for every $1 <p < \infty$}).
\]
\end{lemma}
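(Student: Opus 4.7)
The plan is to reduce the non-degeneracy of $Z_N^\ve(x)$ to the non-degeneracy of $U_1^N$ (Proposition \ref{pr.MalU}) together with the uniform linear-algebraic lower bounds already collected in Lemma \ref{lm.veLA}. The key observation is that in both of the two possible definitions of $Z_N^\ve(x)$, the random vector has the form $Z_N^\ve(x) = A(x,\ve)\, U_1^N$, where $A(x,\ve)$ is a \emph{deterministic} linear map (smooth in $x$, with explicit $\ve$-dependence through powers of $\ve$). Consequently the Malliavin covariance factorizes cleanly as
\begin{equation*}
\sigma[Z_N^\ve(x)] \;=\; A(x,\ve)\,\sigma[U_1^N]\,A(x,\ve)^*.
\end{equation*}

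First I would choose the neighborhood $O_N$ of $x_0$ and the threshold $\ve_0=\ve_0(N)\in(0,1]$ supplied by Lemma \ref{lm.veLA}(6). In the first case, where $A(x,\ve)=\tilde{\Gamma}_N^\ve(x)$, that lemma yields $A(x,\ve)A(x,\ve)^*\ge \tfrac12\,{\rm Id}_{{\mathbb R}\langle\cH\rangle}$ uniformly in $x\in O_N$ and $\ve\in(0,\ve_0]$. In the second case, where $A(x,\ve)$ is the block matrix in \eqref{def.clvec}, the same lemma gives the uniform bound $A(x,\ve)A(x,\ve)^*\ge \tfrac{r_N}{2}\,{\rm Id}_{{\frak g}^N({\mathbb R}^n)}$. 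So in either case there is a constant $c_N>0$ with $A(x,\ve)A(x,\ve)^*\ge c_N\,{\rm Id}$ throughout $O_N\times(0,\ve_0]$.

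Combining this with the pointwise a.s. bound $\sigma[U_1^N]\ge \lambda[U_1^N]\,{\rm Id}$ from Proposition \ref{pr.MalU}, I obtain
\begin{equation*}
\sigma[Z_N^\ve(x)] \;\ge\; \lambda[U_1^N]\, A(x,\ve)A(x,\ve)^*
\;\ge\; c_N\,\lambda[U_1^N]\,{\rm Id}.
\end{equation*}
Taking determinants (the target spaces have fixed dimension $m_N$, namely $\#\cH=d$ or $\dim{\frak g}^N({\mathbb R}^n)$), this yields
\begin{equation*}
\bigl(\det\sigma[Z_N^\ve(x)]\bigr)^{-1} \;\le\; c_N^{-m_N}\,\lambda[U_1^N]^{-m_N}
\end{equation*}
uniformly in $x\in O_N$ and $\ve\in(0,\ve_0]$. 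Proposition \ref{pr.MalU} gives $\lambda[U_1^N]^{-1}\in\bigcap_{1<p<\infty}L^p$, so the right-hand side belongs to every $L^p$ with a norm that is independent of $x$ and $\ve$. The same estimate bounds $\|\sigma[Z_N^\ve(x)]^{-1}\|_{\rm op}$ by $(c_N\lambda[U_1^N])^{-1}$, which again sits in every $L^p$ uniformly. This is exactly the uniform Malliavin non-degeneracy required.

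I do not anticipate a serious obstacle: the proof is essentially a ``sandwich'' of the a.s.\ positivity of $\sigma[U_1^N]$ between two uniformly positive definite deterministic matrices. The only mild subtlety is making sure that the neighborhood $O_N$ and $\ve_0$ chosen here are the same ones appearing in Lemma \ref{lm.veLA}(6), which is just bookkeeping. No new analytical input beyond Proposition \ref{pr.MalU} and Lemma \ref{lm.veLA} is needed.
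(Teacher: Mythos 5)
Your proposal is correct and follows essentially the same route as the paper: both factor the covariance as $\sigma[Z_N^\ve(x)]=A(x,\ve)\,\sigma[U_1^N]\,A(x,\ve)^*\ge \lambda[U_1^N]\,A(x,\ve)A(x,\ve)^*$ and then combine the uniform lower bound of Lemma \ref{lm.veLA}(6) with the integrability of $\lambda[U_1^N]^{-1}$ from Proposition \ref{pr.MalU}. Your write-up merely spells out the determinant step that the paper leaves implicit.
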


\begin{proof}
It is easy to see that
\[
\sigma [ \tilde{\Gamma}_N^{\ve} (x) U_1^N]
\ge 
 \tilde{\Gamma}_N^{\ve} (x) 
 \sigma [  U_1^N]
   \tilde{\Gamma}_N^{\ve} (x)^* 
   \ge 
    \lambda [  U_1^N] \cdot
    \tilde{\Gamma}_N^{\ve} (x)    \tilde{\Gamma}_N^{\ve} (x)^*.
      \]
     The other Wiener functional 
     also satisfies a similar estimate.
Then, this lemma easily follows from 
Lemma \ref{lm.veLA} (6), and Proposition \ref{pr.MalU}.
\end{proof}

\section{Stochastic differential equation on ${\mathbb R}^d$}

For $V_1, \ldots, V_n \in {\frak X}  ({\mathbb R}^d)$
and $0 < \ve \le 1$, we consider 
the following Stratonovich-type SDE on ${\mathbb R}^d$:
\begin{equation}\label{def.sde}
dX^{\ve} (t, x) = \ve \sum_{i=1}^n V_i (X^{\ve} (t, x) ) \circ dw^i_t
\qquad
\mbox{with $X^{\ve} (0, x)=x$.}
\end{equation}
When $\ve =1$, we simply write $X (t, x)$ for $X^{\ve} (t, x)$.
By the well-known scaling property, 
$(X^{\ve} (t, x))_{t \ge 0}$ and $(X(\ve^2 t, x))_{t \ge 0}$
have the same law.

For SDE (\ref{def.sde}),
we always assume that  $V_i^j := \la dx^j, V_i\ra$
has bounded derivatives of all order $\ge 1$ 
($1 \le i \le n, 1 \le j \le d$).
This is a standard assumption in Malliavin calculus.

The aim of this section is to prove that 
$\delta_x ( X^{\ve} (1, x))$ admits an asymptotic expansion 
as $\ve \searrow 0$ in the space of Watanabe distributions 
uniformly in $x$ under the equiregular H\"ormander condition 
on the coefficient vector fields (Theorem \ref{tm.R^d}). 
The expansion for each fixed $x$ 
under the usual H\"ormander condition was already proved in \cite{tak}.
We carefully follow the argument  in \cite{tak}
and show the uniformity of the expansion under the equiregular condition.
At the end of the section, 
we discuss the case of SDE with a nice drift term 
(Corollary \ref{cor.Gir}).

Now we recall the stochastic Taylor expansion in $\ve$.
Note that (\ref{eq.te})--(\ref{def.R}) is an asymptotic expansion
in ${\mathbb D}_{\infty}$-topology for each fixed $x$ and $t$.
The aim of the next proposition is to make sure the uniformity 
of the expansion in $x$ as $x$ varies in a compact subset.

\begin{proposition}\label{pr.ste}
Let the notations be as above and let $N \ge 1$. Then, 
we have 
\begin{eqnarray}\label{eq.te}
X^{\ve} (t, x) = x + E_N^{\ve} (t,x) + R_{N+1}^{\ve} (t,x),
\end{eqnarray}
where we  set
\begin{align}
E_N^{\ve} (t,x) 
&=
\sum_{I \in \cI (N)}  \ve^{|I|} ( V_I {\rm Id} )(x) w_t^I,
\label{def.E}
\\
\label{def.R}
 R_{N+1}^{\ve} (t,x) &=
\ve^{N+1}
\sum_{I \in \cI (N+1) \setminus  \cI (N)}
\\
& \qquad
\int_0^{t} 
\cdots
\int_0^{t_3} 
\int_0^{t_2}
 ( V_I {\rm Id} )( X^{\ve} (t_1, x)) \circ dw_{t_1}^{i_1}
 \circ dw_{t_2}^{i_2} 
 \cdots  \circ dw_{t_{N+1}}^{i_{N+1}}.
 \nonumber
 \end{align}
Moreover, for each compact set $K \subset {\mathbb R}^d$, 
the asymptotic expansion (\ref{eq.te}) is uniform in 
$(t, x) \in [0,1] \times K$, that is,
\begin{equation}\label{ineq.Runi}
\sup_{0 \le t \le 1} \sup_{x \in K} 
\|  R_{N+1}^{\ve} (t,x)\|_{p,k} \le C \ve^{N+1}
\qquad
(N \ge 1, 1 <p<\infty, k \in {\mathbb N})
\end{equation}
holds for all $\ve \in (0,1]$.
Here,  $C =C(N, p, k)$ is a certain 
positive constant independent of  $\ve$.
\end{proposition}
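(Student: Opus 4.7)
My plan is to obtain the identity \eqref{eq.te} by iterating the Stratonovich version of It\^o's formula and then to bound the remainder uniformly in $(t,x)\in[0,1]\times K$ via a combination of Burkholder--Davis--Gundy estimates, moment estimates for $X^{\ve}$, and Meyer's equivalence of Sobolev norms.

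For the algebraic identity, I would start from the integrated form $X^{\ve}(t,x)=x+\ve\sum_i \int_0^t V_i(X^{\ve}(s,x))\circ dw^i_s$ applied to the function $\mathrm{Id}\colon{\mathbb R}^d\to{\mathbb R}^d$, and then repeatedly substitute, for each smooth $\psi$, the identity
\[
\psi(X^{\ve}(s,x)) = \psi(x)+\ve\sum_i\int_0^s (V_i\psi)(X^{\ve}(r,x))\circ dw^i_r,
\]
with $\psi=V_I\mathrm{Id}$ for words $I$ of successively increasing length. After $N$ iterations one reads off the constant iterated-integral terms $\ve^{|I|}(V_I\mathrm{Id})(x)w^I_t$ with $|I|\le N$, which make up $E_N^{\ve}(t,x)$, while what is left unintegrated is precisely the expression $R_{N+1}^{\ve}(t,x)$ written in \eqref{def.R}. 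So \eqref{eq.te} is obtained by a purely formal iteration, and there is nothing random to check at this stage.

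For the uniform bound \eqref{ineq.Runi} on $R_{N+1}^{\ve}(t,x)$ in $L^p$, I would convert each iterated Stratonovich integral into an iterated It\^o integral plus lower-order drift terms using the standard conversion formula; the whole object becomes a finite sum of iterated It\^o integrals of $(V_J\mathrm{Id})(X^{\ve}(\cdot,x))$ with $|J|\ge N+1$ (after absorbing the drifts correctly, each produces the required power of $\ve$). Iterated Burkholder--Davis--Gundy combined with H\"older gives an estimate of the form
\[
\sup_{0\le t\le 1}\|R_{N+1}^{\ve}(t,x)\|_{L^p}\le C\ve^{N+1}\sum_{|J|\ge N+1}\sup_{0\le s\le 1}\|(V_J\mathrm{Id})(X^{\ve}(s,x))\|_{L^{p'}}
\]
for some $p'>p$. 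Since $V_i^j$ has bounded derivatives of every order $\ge 1$, each $V_J\mathrm{Id}$ has polynomial growth in $x$, and from the linear-growth estimate on the coefficients one gets uniform moment bounds $\sup_{x\in K}\sup_{0\le s\le 1}\|X^{\ve}(s,x)\|_{L^q}<\infty$ for all $q$ and all $\ve\in(0,1]$; combining the two yields the desired uniform $L^p$ bound.

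To upgrade to the Sobolev norm $\|\cdot\|_{p,k}$, I would invoke Meyer's equivalence (item (b) in Section~2), which reduces matters to estimating $\|D^k R_{N+1}^{\ve}(t,x)\|_{L^p(W;H^{\otimes k})}$. The $H$-derivatives $D^j X^{\ve}(t,x)$ solve the usual linearised SDEs driven by the Jacobian process $\partial X^{\ve}$ and its inverse; by the same boundedness assumption on the derivatives of $V_i^j$, these Jacobian processes and their inverses have moments of all orders that are bounded uniformly in $t\in[0,1]$, $x\in K$ and $\ve\in(0,1]$. Applying $D^k$ inside the iterated Stratonovich integrals in \eqref{def.R} (differentiation passes through the stochastic integrals in the standard way, producing additional factors of $DX^{\ve}$ and at most finitely many extra boundary-type Lebesgue integrals), one obtains again a finite sum of iterated It\^o--Lebesgue integrals whose integrands have polynomial growth in $X^{\ve}$, $\partial X^{\ve}$, $(\partial X^{\ve})^{-1}$ and their higher derivatives. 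Repeating the BDG--H\"older argument with the uniform moment bounds from the previous paragraph yields the claimed inequality \eqref{ineq.Runi}.

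The main technical obstacle is the last step: one must verify that the estimates on the iterated-integral remainder, together with all the Malliavin derivatives that appear after applying $D^k$, are genuinely uniform in $x\in K$ and $t\in[0,1]$ (rather than just pointwise in $x$ as in Takanobu's original argument). This is essentially bookkeeping, but it requires being careful that every constant produced by BDG and H\"older depends only on $K$, $N$, $p$, $k$ and not on $(t,x,\ve)$; the linear/polynomial growth of the coefficients $V_i^j$ and their derivatives is what makes this bookkeeping go through.
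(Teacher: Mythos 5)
Your derivation of the identity \eqref{eq.te} by iterating the Stratonovich chain rule is exactly what underlies the paper's (unproved, ``well-known'') algebraic statement, and your $L^p$-estimate of the remainder via Stratonovich-to-It\^o conversion, BDG, and uniform moment bounds on compacts matches the paper's second step. Where you diverge is in the upgrade from $L^p$ to $\|\cdot\|_{p,k}$: you propose to apply $D^k$ directly to the iterated stochastic integrals in \eqref{def.R} and control all the resulting terms involving $DX^{\ve}$, the Jacobian and its inverse --- the heavy bookkeeping you yourself flag as the main obstacle. The paper sidesteps this entirely with one observation: $E_N^{\ve}(t,x)$ is a deterministic linear combination of iterated integrals of order at most $N$, hence lies in the sum of the first $N$ Wiener chaoses, so $D^{N+1}E_N^{\ve}(t,x)=0$ and therefore $D^{k}R_{N+1}^{\ve}(t,x)=D^{k}X^{\ve}(t,x)$ for every $k\ge N+1$. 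Combining Meyer's equivalence $\|R\|_{p,k}\le C(\|R\|_{L^p}+\|D^kR\|_{L^p})$ with the single standard input $\sup_{t,x\in[0,1]\times K}\|D^kX^{\ve}(t,x)\|_{L^p}\le C\ve^{k}$ then gives \eqref{ineq.Runi} for $k\ge N+1$, and monotonicity of the Sobolev norms in $k$ handles the remaining $k$. Your route is viable and, if carried out, would prove the same uniform bound (the constants from BDG and H\"older do depend only on $K,N,p,k$, as you note), but it requires differentiating the remainder term by term, which the paper's chaos-degree trick makes unnecessary; you may want to adopt that shortcut, since it reduces the whole Sobolev upgrade to a two-line computation.
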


\begin{proof}
This is well-known at least when $x$ is fixed.  
So we only give a sketch of proof so that
one can see the uniformity in $x$.
In this proof the positive constant $C =C(N, p, k)$
 may change from line to line.

Firstly, it is well-known that
\[
\sup_{0 \le t \le 1} \sup_{x \in K} 
\|  D^k X^{\ve} (t,x)\|_{L^p} \le C \ve^k
\qquad
(1 <p<\infty, k \in {\mathbb N}).
\]
The proof is standard, although it is long and may not be so easy.
Secondly, we can see from (\ref{def.R}) that
\[
\sup_{0 \le t \le 1} \sup_{x \in K} 
\|  R_{N+1}^{\ve} (t,x)\|_{L^p} \le C \ve^{N+1}
\qquad
(N \ge 1, 1 <p<\infty, k \in {\mathbb N}).
\]
Thirdly, $D^{N+1} E_N^{\ve}(t,x) =0$.

Now we use the stronger form of Meyer's inequality.
If $k \ge N+1$, then 
\begin{eqnarray*}
\|  R_{N+1}^{\ve} (t,x)\|_{p,k} &\le&
C ( \|  R_{N+1}^{\ve} X^{\ve} (t,x)\|_{L^p} 
  + \| D^{k} R_{N+1}^{\ve}  (t,x)\|_{L^p})
  \\
  &\le&
C(  \|  R_{N+1}^{\ve} X^{\ve} (t,x)\|_{L^p} 
  + \| D^{k} X^{\ve}  (t,x)\|_{L^p})
  \le
C  \ve^{N+1}.
  \end{eqnarray*}
  Since the Sobolev norm is increasing in $k$, we are done.
\end{proof}

\vspace{10mm}

We modify the stochastic Taylor expansion 
(\ref{eq.te})--(\ref{def.R}) for later use.
The definition of $F_N$ was given in (\ref{def.F}).

\begin{proposition}\label{pr.ste2}
Let $N \ge 1$. Then, we have 
\begin{eqnarray}\label{eq.te2}
X^{\ve} (t, x) = x + F_N (x, \Delta^N_{\ve} U^N_t)
+ \hat{R}_{N+1}^{\ve} (t,x).
\end{eqnarray}
Here we  set
\begin{align}
\label{def.hatR}
\hat{R}_{N+1}^{\ve} (t,x) &=
R_{N+1}^{\ve} (t,x) 
\\
& \quad
-
\sum_{k=1}^N \frac{1}{k!}
\sum_{ |I_1| + \cdots + |I_k| >N}  
\ve^{|I_1| + \cdots + |I_k|}   
( V_{[I_1]} \cdots  V_{[I_k]}  {\rm Id} )(x)  U_t^{I_1} \cdots U_t^{I_k},
\nonumber
 \end{align}
 where the second summation  runs over 
 all $(I_1, \ldots, I_k) \in \cG (N)^k$ such that
 $|I_1| + \cdots + |I_k| >N$.
Moreover, for each compact set $K \subset {\mathbb R}^d$, 
$\hat{R}_{N+1}^{\ve} (t,x)$ satisfies the same estimate 
as in (\ref{ineq.Runi}) for a different constant $C >0$.
\end{proposition}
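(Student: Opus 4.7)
The plan is to carry out a direct algebraic computation identifying $F_N(x,\Delta^N_\ve U^N_t)$ with a truncation of $E^\ve_N(t,x)$, and then use the already-established bound on $R^\ve_{N+1}$ in Proposition \ref{pr.ste} together with the uniform Sobolev bounds on the iterated integrals $U^I_t$.

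First, I would rewrite $E^\ve_N(t,x)$ in terms of $U^N_t$. The key observation is that the dilation $\Delta^N_\ve$ is an algebra automorphism of $T^N({\mathbb R}^n)$, so it commutes with $\exp_N$; hence
\[
\ve^{|I|} w^I_t \;=\; \pi_I(\Delta^N_\ve Y^N_t) \;=\; \pi_I\bigl(\exp(\Delta^N_\ve U^N_t)\bigr).
\]
Applying Lemma \ref{lm.YtoU} with $u=\Delta^N_\ve U^N_t$ (so $u^J=\ve^{|J|} U^J_t$) gives
\[
E^\ve_N(t,x) \;=\; \sum_{m=1}^{N} \frac{1}{m!}\sum_{|J_1|+\cdots+|J_m|\le N}
\ve^{|J_1|+\cdots+|J_m|}(V_{[J_1]}\cdots V_{[J_m]}\,\mathrm{Id})(x)\, U^{J_1}_t\cdots U^{J_m}_t.
\]

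Next I would expand $F_N(x,\Delta^N_\ve U^N_t)=M_N(x,u)B_N(x)u$ at $u=\Delta^N_\ve U^N_t$. Using (\ref{eq.VbfV}) in the form $\sum_j V_{[J]}^j {\bf V}^{ij}_{I_1,\ldots,I_k}=V_{[J]}V_{[I_1]}\cdots V_{[I_k]}x^i$, the contribution of $M_N^{ij}(x,u)(B_N(x)u)^j$ summed over $j$ becomes a single ordered product $V_{[J_1]}V_{[J_2]}\cdots V_{[J_m]}x^i$ (with $J_1$ coming from the $B_N u$ factor and $J_2,\ldots,J_m$ from $M_N$, and the combinatorial prefactor $1/(k+1)!=1/m!$ in $M_N$ matching up with the reindexing $m=k+1$). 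This yields the clean identity
\[
F_N(x,\Delta^N_\ve U^N_t)^i \;=\; \sum_{m=1}^{N} \frac{1}{m!}\sum_{J_1,\ldots,J_m\in\cG(N)}
\ve^{|J_1|+\cdots+|J_m|}(V_{[J_1]}\cdots V_{[J_m]}x^i)\, U^{J_1}_t\cdots U^{J_m}_t,
\]
with \emph{no} constraint on $|J_1|+\cdots+|J_m|$. Subtracting the expression for $E^\ve_N(t,x)$ above (noting that for $m=1$ the condition $|J_1|>N$ is empty) gives exactly the correction term appearing in the definition (\ref{def.hatR}) of $\hat R^\ve_{N+1}$. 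Combining this with the stochastic Taylor expansion (\ref{eq.te}) proves (\ref{eq.te2}).

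For part (ii), I would bound each correction term: for $(J_1,\ldots,J_k)$ with $|J_1|+\cdots+|J_k|>N$ we have $\ve^{|J_1|+\cdots+|J_k|}\le \ve^{N+1}$; the smooth coefficient $(V_{[J_1]}\cdots V_{[J_k]}\,\mathrm{Id})(x)$ is bounded uniformly on the compact $K$; and by Remark \ref{re.pol} the products $U^{J_1}_t\cdots U^{J_k}_t$ lie in ${\mathbb D}_\infty$ with Sobolev norms bounded uniformly in $t\in[0,1]$ (using the chain rule and $\|U^N_t\|_{p,k}<\infty$). Combined with the bound (\ref{ineq.Runi}) on $R^\ve_{N+1}(t,x)$, this yields $\sup_{t,x}\|\hat R^\ve_{N+1}(t,x)\|_{p,k}\le C\ve^{N+1}$.

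The only point requiring care is the bookkeeping in step two: verifying that the prefactor $1/(k+1)!$ in $M_N$ combined with the asymmetric placement of the $B_N u$ index indeed produces the symmetric-looking sum $\sum_{J_1,\ldots,J_m}\frac{1}{m!}(V_{[J_1]}\cdots V_{[J_m]}x^i)\,u^{J_1}\cdots u^{J_m}$ — this is pure algebra but must be done with the correct ordering since $V_{[J_1]}\cdots V_{[J_m]}$ is \emph{not} symmetric in its indices. Everything else is either cited from Lemma \ref{lm.YtoU}, Remark \ref{re.pol}, or Proposition \ref{pr.ste}.
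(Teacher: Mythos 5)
Your proposal is correct and follows essentially the same route as the paper: the paper likewise identifies $F_N(x,\Delta^N_\ve U^N_t)$ with the unconstrained sum $\sum_{m}\frac{1}{m!}\sum_{J_1,\ldots,J_m}\ve^{|J_1|+\cdots+|J_m|}(V_{[J_1]}\cdots V_{[J_m]}\mathrm{Id})(x)U^{J_1}_t\cdots U^{J_m}_t$ directly from the definitions \eqref{def.bfV}--\eqref{def.F}, proves via Lemma \ref{lm.YtoU} that $E^\ve_N(t,x)$ is the same sum restricted to $|J_1|+\cdots+|J_m|\le N$, and obtains \eqref{eq.te2} by subtraction, with (ii) dismissed as almost obvious. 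Your write-up merely makes explicit the two points the paper leaves implicit (the commutation of $\Delta^N_\ve$ with $\exp$, and the $1/(k+1)!$ reindexing), and both are handled correctly.
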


\begin{proof}
The second assertion is almost obvious. 
Since it is immediate from (\ref{def.bfV})--(\ref{def.F}) that
\[
F_N (x, \Delta^N_{\ve} U^N_t)
=
\sum_{k=1}^N \frac{1}{k!}
\sum_{I_1, \ldots, I_k \in \cG (N)}  
\ve^{|I_1| + \cdots + |I_k|}   
( V_{[I_1]} \cdots  V_{[I_k]}  {\rm Id} )(x) U_t^{I_1} \cdots U_t^{I_k},
\]
it is enough to see that
\begin{equation}\label{eq.Eu}
E_N^{\ve} (t,x) 
=
\sum_{k=1}^N \frac{1}{k!}
\sum_{ |I_1| + \cdots + |I_k| \le N}  
\ve^{|I_1| + \cdots + |I_k|}   
( V_{[I_1]} \cdots  V_{[I_k]}  {\rm Id} )(x) U_t^{I_1} \cdots U_t^{I_k}.
\end{equation}
Here, the second summation  runs over 
 all $(I_1, \ldots, I_k) \in \cG (N)^k$ such that
 $|I_1| + \cdots + |I_k| \le N$.
Equality (\ref{eq.Eu}) immediately follows from Lemma \ref{lm.YtoU}
 and the definition of $U_t^N$.
\end{proof}

Recall Kusuoka-Stroock's estimate 
for Malliavin covariance matrix of $X^{\ve}(1,x)$ under
the H\"ormander condition at $x_0$.
Our aim here is to make sure the estimate is uniform in $x$
as it varies in a small neighborhood of $x_0$.
Note that the equiregular condition is not needed here.
\begin{proposition}\label{pr.KS}
Assume ${\bf (HC)}_{x_0}$ at $x_0 \in {\mathbb R}^d$.
Then, there exist a neighborhood $O$ of $x_0$
and a positive constants  $M$ independent of $p$, $x$ and $\ve$ 
such that 
\[
\sup_{x \in O} \sup_{0 < \ve \le 1}
\ve^M
\| \det \sigma [X^{\ve}(1,x)]^{-1} \|_{L^p} <\infty
\qquad
\mbox{for every $p \in (1,\infty)$.}
\]
In particular, 
$X^{\ve}(1,x)$ is non-degenerate in the sense of Malliavin 
for every $\ve \in (0,1]$ and $x \in O$.
\end{proposition}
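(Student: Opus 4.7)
The plan is to adapt the classical small-time Malliavin non-degeneracy estimate of Kusuoka--Stroock so as to make the bound uniform in $x$ on a neighborhood of $x_0$. Via the well-known scaling identity $(X^\ve(t,x))_{t\ge 0}\stackrel{d}{=}(X(\ve^2 t,x))_{t\ge 0}$, together with the corresponding scaling of the Malliavin covariance by an overall factor of $\ve^2$, the assertion is equivalent to a small-time estimate
\[
\sup_{x\in O}\,t^{M}\,\|\det\sigma[X(t,x)]^{-1}\|_{L^p}<\infty
\qquad(0<t\le 1)
\]
for some exponent $M=M(N_0,d)$ and a constant depending only on $p$, which is the standard form of the Kusuoka--Stroock result.

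First I would use ${\bf (HC)}_{x_0}$ to choose $\cH\subset\cG(N_0)$ with $\#\cH=d$ such that $\{V_{[I]}(x_0)\mid I\in\cH\}$ is a basis of $\mathbb{R}^d$. Equivalently $B_\cH(x_0)$ from \eqref{def.BcH} is invertible, and by continuity of $x\mapsto B_\cH(x)$ this invertibility persists on a small open neighborhood $O$ of $x_0$, with a uniform lower bound $\det\bigl(B_\cH(x)B_\cH(x)^*\bigr)\ge c_0>0$. Note that equiregularity is not assumed: we only need a single $\cH$ witnessing ${\bf (HC)}_{x_0}$, and the same $\cH$ continues to witness it everywhere in $O$ by continuity.

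The next step is the Kusuoka--Stroock argument itself. Writing $\sigma[X(t,x)]=J_tK_tJ_t^{*}$, where $J_t$ is the Jacobian flow and
\[
K_t=\sum_{i=1}^n\int_0^t\bigl(J_s^{-1}V_i(X(s,x))\bigr)\bigl(J_s^{-1}V_i(X(s,x))\bigr)^{*}\,ds,
\]
one reduces the problem to bounding the smallest eigenvalue of $K_t$ from below in every $L^p$. Norris's lemma, applied iteratively to the semimartingales $s\mapsto\la\xi,J_s^{-1}V_{[I]}(X(s,x))\ra$ for $I\in\cG(N_0)$, converts smallness of $\la\xi,K_t\xi\ra$ into smallness of $\sum_{I\in\cH}\la\xi,V_{[I]}(x)\ra^{2}$, modulo events of super-polynomially small probability in $t$. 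The uniform invertibility of $B_\cH(x)$ on $O$ then forces $|\xi|$ to be small, yielding the desired polynomial bound in $t^{-1}$ on the reciprocal of the smallest eigenvalue.

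The one point requiring real care---and the main obstacle---is to verify that every constant appearing in the above argument is uniform in $x\in O$. Our standing assumption that the components $V_i^j$ have bounded derivatives of all orders $\ge 1$ provides global bounds for all iterated brackets $V_{[I]}$ and their derivatives when restricted to a bounded set, as well as $L^p$-bounds for $J$ and $J^{-1}$ on $[0,1]$ independent of $x\in O$ (the process can be localized to remain close to $O$, the exit probability being exponentially small, cf.\ \eqref{ineq.exitU}). Combined with the uniform lower bound on $\det B_\cH(x)$, this lets one follow the proof in \cite{ks2} with all constants depending only on these uniform quantities; this is the only new bookkeeping needed beyond the classical pointwise statement.
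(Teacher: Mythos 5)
Your argument is correct and is essentially the content of the result the paper invokes: the paper's proof of Proposition \ref{pr.KS} is a one-line citation of Theorem (2.17) in Kusuoka--Stroock \cite{ks2}, whose proof is exactly the scaling-plus-Norris-lemma scheme you outline, with the uniformity in $x\in O$ coming from the continuity of $x \mapsto B_{\cH}(x)$ (equivalently, a uniform lower bound on $\inf_{|\xi|=1}\sum_{I}\la V_{[I]}(x),\xi\ra^2$) together with the global bounds on the coefficient vector fields. Nothing beyond that bookkeeping is needed, so your reconstruction matches the paper's (cited) argument.
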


\begin{proof}
This is proved in \cite[Theorem (2.17)]{ks2}.
\end{proof}

Due to the non-degeneracy of $X^{\ve}(1,x)$ in the sense of Malliavin,
Watanabe's pullback of the delta function 
$\delta_x (X^{\ve}(1,x)) \in \tilde{\mathbb D}_{- \infty}$ 
is well-defined.
Takanobu \cite{tak} showed that
$\delta_x (X^{\ve}(1,x)) = \delta_0 (X^{\ve}(1,x) -x)$ admits an asymptotic expansion 
up to any order as $\ve\searrow 0$ in 
$\tilde{\mathbb D}_{- \infty}$-topology.

Now we  present our main result in this section.
This is a uniform version of Takanobu's main theorem in \cite{tak}.
To prove the uniformity in the starting point $x$,
we need to assume the equiregular H\"ormander condition
near $x_0$.
(Note that the SDE in \cite{tak} has a drift term. 
On the other hand, the dependency on $x$ is not studied in \cite{tak}.)

\begin{theorem}\label{tm.R^d}
Assume ${\bf (ER)}_{x_0}$. 
Then,  
there exists a decreasing sequence $\{ O_j \}_{j \ge 0}$ of
neighborhoods of $x_0$ 
such that the asymptotic expansion 
\begin{equation}\label{asy.dX}
\delta_x (X^{\ve}(1,x))
\sim
\ve^{-\nu} \bigl( \Theta_0 (x)+ \ve  \Theta_1 (x)
+ \ve^2  \Theta_2 (x)+\cdots
\bigr)
\qquad
\mbox{in $\tilde{\mathbb D}_{- \infty}$ as $\ve\searrow 0$.}
\end{equation}
holds for every $x \in O_0$ with the following properties:
 {\rm (i)}~ $\inf_{x \in O_0} {\mathbb E} [\Theta_0] >0$,
{\rm (ii)}~for every $j \ge 0$
there exists $k=k(j)>0$ such that
\[
\sup_{x \in O_j} \{  \|\Theta_j (x)\|_{p, -k} 
+\sup_{0 < \ve \le1} \|
\ve^{-(j+1-\nu)} 
r^{\ve}_{j+1} (x)\|_{p,-k}  \}<\infty
\]
for all $p \in (1,\infty)$.
Here, we set
\[
r^{\ve}_{j+1} (x)=\delta_x (X^{\ve}(1,x))
-
\ve^{-\nu} \bigl( \Theta_0 (x)
+\cdots
+ \ve^{j}  \Theta_{j} (x)
\bigr).
\]
Moreover, $\Theta_{2j-1}(x; \,\cdot\, )$ is odd as a Wiener functional
for every $j \ge 1$ and $x \in O_0$,
that is,  $\Theta_{2j-1} (x; -w) = - \Theta_{2j-1} (x; w)$.
\end{theorem}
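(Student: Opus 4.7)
The plan is to follow Takanobu's strategy in \cite{tak}, isolating at each step the uniformity in the starting point $x$ that is guaranteed by equiregularity. Fix $x_0$ satisfying $\mathbf{(ER)}_{x_0}$, choose $N \geq N_0$ large enough to capture the desired order of expansion, and work on a neighborhood $O \subset O_N$ where the structures of Section~\ref{sec:v.f.on.Rd} are available. By Proposition~\ref{pr.ste2}, $X^\varepsilon(1,x) - x = F_N(x,\Delta_\varepsilon^N U_1^N) + \hat{R}_{N+1}^\varepsilon(1,x)$ with remainder $O(\varepsilon^{N+1})$ in every $\mathbb{D}_{p,k}$-norm uniformly in $x$. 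Using $B_N = B_{\mathcal{H}}\,\Gamma_N$ together with Lemma~\ref{lm.veLA}(2), I factor
\[
F_N(x,\Delta_\varepsilon^N u) = M_N(x,\Delta_\varepsilon^N u)\,B_{\mathcal{H}}(x)\,\Delta_\varepsilon^{\mathcal{H}}\,\tilde{\Gamma}_N^{\varepsilon}(x)\,u,
\]
pulling out the deterministic invertible scaling $B_{\mathcal{H}}(x)\Delta_\varepsilon^{\mathcal{H}}$ whose determinant equals $\varepsilon^{\nu}\det B_{\mathcal{H}}(x)$. Setting
\[
\tilde{X}^\varepsilon(x) := [B_{\mathcal{H}}(x)\Delta_\varepsilon^{\mathcal{H}}]^{-1}(X^\varepsilon(1,x) - x),
\]
and localizing on $\{|\Delta_\varepsilon^N U_1^N| \leq \kappa_N\}$ via \eqref{ineq.exitU} (so that \eqref{ineq.detM}--\eqref{ineq.qdrt} apply and the complementary event contributes only exponentially small terms in every Sobolev norm), one verifies, using crucially \eqref{eq.gam} which prevents negative powers of $\varepsilon$ in the entries of $\tilde{\Gamma}_N^{\varepsilon}(x)$, that $\tilde{X}^\varepsilon(x)$ lies in $\mathbb{D}_\infty(\mathbb{R}^d)$ and admits an asymptotic expansion $\tilde{X}^\varepsilon(x) \sim f_0(x) + \varepsilon f_1(x) + \varepsilon^2 f_2(x) + \cdots$ in $\mathbb{D}_\infty(\mathbb{R}^d)$ uniformly in $x \in O$, with leading term $f_0(x) = \tilde{\Gamma}_N^{0}(x)\,U_1^N$.

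Next, the formal identity
\[
\delta_x(X^\varepsilon(1,x)) = \varepsilon^{-\nu}\,|\det B_{\mathcal{H}}(x)|^{-1}\,\delta_0(\tilde{X}^\varepsilon(x))
\]
is justified by Watanabe's pullback once $\tilde{X}^\varepsilon(x)$ is shown to be uniformly Malliavin non-degenerate in $(x,\varepsilon) \in O \times (0,\varepsilon_0]$. This non-degeneracy is obtained by combining Lemma~\ref{lm.nondegG} applied to $\tilde{\Gamma}_N^{\varepsilon}(x)U_1^N$ (and, if needed, to the enlarged pair with $P_N U_1^N$ to absorb the dependence on non-$\mathcal{H}$ components of $U_1^N$ coming from $M_N$) with a standard perturbation argument to handle the $O(\varepsilon)$ corrections. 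Watanabe's asymptotic expansion theorem (item~\textbf{(e)} of Section~2) then applies to the family $\tilde{X}^\varepsilon(x)$ and the tempered distribution $T=\delta_0$, giving
\[
\delta_0(\tilde{X}^\varepsilon(x)) \sim \sum_{j \geq 0} \varepsilon^{j}\,\tilde{\Theta}_j(x) \qquad \text{in } \tilde{\mathbb{D}}_{-\infty},
\]
where the $\tilde{\Theta}_j(x)$ are the formal Taylor coefficients; in particular $\tilde{\Theta}_0(x) = \delta_0(\tilde{\Gamma}_N^{0}(x)U_1^N)$. Setting $\Theta_j(x) := |\det B_{\mathcal{H}}(x)|^{-1}\tilde{\Theta}_j(x)$ yields \eqref{asy.dX}. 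The positivity $\inf_{x \in O_0}\mathbb{E}[\Theta_0(x)] > 0$ holds because $\mathbb{E}[\tilde{\Theta}_0(x)]$ is the value at $0$ of the smooth density of the non-degenerate Wiener functional $\tilde{\Gamma}_N^{0}(x)U_1^N$, which depends continuously on $x$ and is strictly positive at $x_0$. The uniform Sobolev-norm bounds on $\Theta_j(x)$ and on the remainders $r_{j+1}^{\varepsilon}(x)$ follow from the quantitative (finite-order) form of Watanabe's theorem together with the uniform-in-$x$ bounds of Proposition~\ref{pr.ste}; the decreasing sequence $\{O_j\}$ arises because reaching expansion order $j$ requires $N = N(j)$ large and the admissible $O_N$ may shrink with $N$.

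Finally, the oddness $\Theta_{2j-1}(x;-w) = -\Theta_{2j-1}(x;w)$ is obtained by tracking $w \mapsto -w$ through the construction. From $U_1^I(-w) = (-1)^{|I|}U_1^I(w)$ and the relation $X^\varepsilon(1,x)|_{-w} = X^{-\varepsilon}(1,x)|_{w}$, one deduces $f_k^I(x;-w) = (-1)^{|I|+k}f_k^I(x;w)$ for every term in the expansion of $\tilde{X}^\varepsilon(x)$. Substituting this into the schematic Fa\`a di Bruno formula $\tilde{\Theta}_j(x) = \sum c_{\alpha,(j_s,i_s)}(\partial^{\alpha}\delta_0)(f_0)\prod_s f_{j_s}^{i_s}$ (with $|\alpha| = k$, $\sum_s j_s = j$, and $\alpha_I = \#\{s : i_s = I\}$) and using the identity $\sum_I|I|\alpha_I = \sum_s|i_s|$, the $|I|$-dependent signs from $\partial^{\alpha}\delta_0$ cancel pairwise against those from $\prod_s f_{j_s}^{i_s}$, leaving only the total sign $(-1)^{\sum_s j_s} = (-1)^j$. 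The main obstacle throughout is establishing uniform Malliavin non-degeneracy and uniform Sobolev-norm control simultaneously in $(x,\varepsilon)$; this is precisely where equiregularity is indispensable, since it allows $\mathcal{H}$ to be chosen independently of $x$ and makes the constants in Lemmas~\ref{lm.veLA} and~\ref{lm.nondegG} uniform over $O$.
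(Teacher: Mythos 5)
Your overall architecture (stochastic Taylor expansion, the factorization $F_N(x,\Delta^N_\ve u)=M_N(x,\Delta^N_\ve u)B_{\cH}(x)\Delta^{\cH}_\ve\tilde{\Gamma}^\ve_N(x)u$, uniform non-degeneracy of $\tilde{\Gamma}^\ve_N(x)U^N_1$ via Lemma~\ref{lm.nondegG}, then Watanabe's expansion theorem) matches the paper's, but the central step where you deviate contains a genuine error. You define $\tilde{X}^\ve(x)=[B_{\cH}(x)\Delta^{\cH}_\ve]^{-1}(X^\ve(1,x)-x)$ and claim, ``using crucially \eqref{eq.gam},'' that it lies in ${\mathbb D}_\infty({\mathbb R}^d)$ uniformly and expands as $f_0+\ve f_1+\cdots$ with $f_0(x)=\tilde{\Gamma}^0_N(x)U^N_1$. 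This fails in general once $N_0\ge 3$. Writing $B_{\cH}(x)^{-1}F_N(x,\Delta^N_\ve U^N_1)=\sum_k\frac1{k!}\sum\ve^{|J_1|+\cdots+|J_k|}B_{\cH}(x)^{-1}(V_{[J_1]}\cdots V_{[J_k]}\mathrm{Id})(x)\,U^{J_1}_1\cdots U^{J_k}_1$, the identity \eqref{eq.gam} kills only the $k=1$ (pure Lie bracket) contributions with $|J|<|I|$ in the $I$-th component; it says nothing about the $k\ge2$ terms $(V_{[J_1]}\cdots V_{[J_k]}\mathrm{Id})(x)$, which are compositions of vector fields applied to $\mathrm{Id}$, not brackets, and need not lie in $\cA_{|J_1|+\cdots+|J_k|}(x)$. (Example: on ${\mathbb R}^4$ take $V_1=\p_1$, $V_2=\p_2+u^1\p_3+(\tfrac{(u^1)^2}{2}+u^2)\p_4$, so $N_0=3$ at the origin; then $(V_2V_2\mathrm{Id})(0)=\p_4\notin\cA_2(0)$.) Consequently the $|I|=N_0$ component of $(\Delta^{\cH}_\ve)^{-1}B_{\cH}(x)^{-1}(X^\ve(1,x)-x)$ contains terms of order $\ve^{2-N_0}$, and $\tilde{X}^\ve(x)$ diverges in $L^p$ as $\ve\searrow0$. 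A purely linear anisotropic rescaling is not enough here; one would need genuinely nonlinear privileged coordinates for the rescaled process to converge.

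The paper circumvents exactly this obstruction in Lemma~\ref{lm.hnki}: rather than dividing the Wiener functional by $B_{\cH}(x)\Delta^{\cH}_\ve$ before pulling back $\delta_0$, it keeps $\delta_0(F_N(x,\Delta^N_\ve U^N_1))$ and performs the change of variables inside the generalized expectation, so that the full random matrix $M_N(x,\Delta^N_\ve U^N_1)$ enters only through the scalar Jacobian $1/\det M_N(x,\Delta^N_\ve U^N_1)$ (a polynomial in $\ve$ with non-negative powers, bounded below by \eqref{ineq.detM}), while the argument of the delta function is $\tilde{\Gamma}^\ve_N(x)U^N_1$, which \eqref{eq.gam} does legitimately control because only single brackets $V_{[J]}$ appear there. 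To repair your proof you should replace the rescaled-functional step by this distributional identity (or an equivalent one). Two smaller points: you never justify that the density of $\tilde{\Gamma}^0_N(x)U^N_1$ is strictly positive at $0$ (the paper derives this from the everywhere-positivity of the density of $U^{N_0}_1$, due to Kunita/Takanobu), and the localization by $\chi^\ve_N$ together with the modified pullback of Proposition~\ref{pr.mdWat} is needed before Watanabe's theorem can be invoked, since $F_N(x,\Delta^N_\ve U^N_1)$ is only non-degenerate on $\{|\Delta^N_\ve U^N_1|<\kappa_N\}$.
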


\begin{remark}\label{re.cnt}
In fact,
$O_j \ni x \mapsto \Theta_j (x) 
\in \tilde{\mathbb D}_{- \infty}$ is continuous for every $j$.
This follows from the uniformity 
of the asymptotic expansion (\ref{asy.dX}) and
continuity of 
$x \mapsto  \delta_x (X^{\ve}(1,x)) 
=\delta_0 (X^{\ve}(1,x)-x) \in \tilde{\mathbb D}_{- \infty}$.
The latter, in turn, follows from Proposition \ref{pr.KS}
and continuity of 
$x \mapsto   X^{\ve}(1,x)-x \in {\mathbb D}_{\infty}$.
\end{remark}

The rest of this section is devoted to proving 
the above theorem.
The neighborhoods $O$ and  $O_j$, $j \ge 0$, may change from line to line.

We introduce a few functions for technical purposes.
Take $\psi \in C^{\infty} ({\mathbb R}, [0,1])$ such that
$\psi (s) = 0$ if $|s| \ge 1$ and $\psi (s) = 1$ if $|s| \le 1/2$.
Take any $\kappa >0$
and set $\psi_N (x)= \psi (x/ (\kappa /2)^2)$ for $N \ge N_0$.
Define a smooth Wiener function 
$\chi^{\ve}_N \in {\mathbb D}_{\infty}$ by 
$\chi^{\ve}_N=\psi_N ( |\Delta^N_{\ve} U^N_1 |^2)$.

\begin{lemma}\label{lm.Xloc}
Assume ${\bf (HC)}_{x_0}$ and $N \ge N_0$. 
Then, there exist a positive constant $k$ independent of $N$ and
a neighborhood $O_N$ of $x_0$ 
such that the following property holds:
For every  $p \in (1, \infty)$, 
there exist positive constants $c_1$ and $c_2$ independent 
of $\ve$ and $x \in O_N$ such that
\[
\sup_{x \in O_N}  
\| \delta_x (X^{\ve}(1,x)) 
- \chi^{\ve}_N\cdot \delta_x (X^{\ve}(1,x))\|_{p, -k} 
\le c_1 e^{-c_2/\ve^2}
\qquad
\mbox{as $\ve\searrow 0$.}
\]
\end{lemma}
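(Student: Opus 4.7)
The strategy is to pair $(1-\chi^\ve_N)\delta_x(X^\ve(1,x))$ with an arbitrary test functional $G \in \tilde{{\mathbb D}}_\infty$ via the duality that defines $\|\cdot\|_{p,-k}$, and to show that the pairing is $O(e^{-c/\ve^2})$ uniformly in $x$. This is a standard Malliavin-calculus localization: integration by parts moves the distributional derivatives hidden inside $\delta_x$ onto smooth factors, one of which will always be a derivative of $1-\chi^\ve_N$, supported on an event of probability $O(e^{-c/\ve^2})$ by the exit estimate (\ref{ineq.exitU}).

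Concretely, write $\delta_0 = \partial_1 \cdots \partial_d H \in \cS'({\mathbb R}^d)$ with $H(y) = \prod_{j=1}^d \mathbf{1}_{\{y_j > 0\}}$ a bounded tempered distribution. Setting $F^\ve_x = X^\ve(1,x) - x$, which by Proposition \ref{pr.KS} is uniformly non-degenerate in the sense of Malliavin on some neighborhood $O_N$ of $x_0$ for $\ve \in (0,1]$, a $d$-fold iteration of the integration-by-parts formula (\ref{ipb1.eq})--(\ref{eq.ibpPHI}) applied to the test functional $(1-\chi^\ve_N) G$ gives
\begin{equation*}
{\mathbb E}\bigl[ (1-\chi^\ve_N)\,\delta_x(X^\ve(1,x))\, G \bigr]
= {\mathbb E}\bigl[ H(F^\ve_x)\, \Psi^\ve\bigl(x; (1-\chi^\ve_N) G\bigr) \bigr],
\end{equation*}
where $\Psi^\ve(x;\cdot)$ is obtained by composing the operator of (\ref{eq.ibpPHI}) $d$ times using $DF^\ve_x$ and $\gamma^{ij}_{F^\ve_x}$. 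Expanding $\Psi^\ve$ by the Leibniz rule in the adjoints $D^*$ produces a finite sum of products whose factors fall into four types: (a) $(1-\chi^\ve_N)$ or an $H$-derivative of $\chi^\ve_N$ of order $\le d$; (b) $G$ or an $H$-derivative of $G$ of order $\le d$; (c) $H$-derivatives of $X^\ve(1,x)$ of order $\le d+1$; (d) entries of $\gamma_{F^\ve_x}$ and their $H$-derivatives.

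Every term of this expansion contains at least one factor of type (a), and by construction of $\psi_N$ such a factor vanishes off the event
\begin{equation*}
A^\ve := \bigl\{ |\Delta^N_\ve U^N_1| \ge \kappa/(2\sqrt{2}) \bigr\}.
\end{equation*}
The exit bound (\ref{ineq.exitU}) with $t=1$ yields ${\mathbb P}(A^\ve) \le \hat{C}_{N,\kappa} \exp(-C_{N,\kappa}/(4\ve^2))$ for small $\ve$. Since $|H| \le 1$, H\"older's inequality with conjugate exponents $p,q$ gives
\begin{equation*}
\bigl| {\mathbb E}[H(F^\ve_x) \Psi^\ve] \bigr|
\le \|\mathbf{1}_{A^\ve}\|_{L^q} \|\Psi^\ve\|_{L^p}
\le \hat{C}_{N,\kappa}^{1/q} e^{-C_{N,\kappa}/(4q\ve^2)} \|\Psi^\ve\|_{L^p},
\end{equation*}
so it remains only to show $\|\Psi^\ve\|_{L^p} \le C\ve^{-M'}\|G\|_{q,k}$ for some $k, M' > 0$ uniform in $x \in O_N$.

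For that bound: building block (c) carries positive powers of $\ve$ since $\sup_{x \in K}\|D^m X^\ve(1,x)\|_{L^r} \le C\ve^m$ as used in the proof of Proposition \ref{pr.ste}; for (d), Proposition \ref{pr.KS} combined with Cramer's rule and the recursion $D\gamma^{ij} = -\sum\gamma^{ik}(D\sigma^{kl})\gamma^{lj}$ of item (d) of Section 2 gives $\|\gamma^{ij}\|_{L^r} \le C\ve^{-M'_1}$ and similar polynomial bounds on its derivatives; for (a), differentiating $\chi^\ve_N = \psi_N(|\Delta^N_\ve U^N_1|^2)$ in the $H$-direction produces only factors of the form $\Delta^N_\ve D^j U^N_1$, which are polynomial in $\ve$ with \emph{nonnegative} powers on each tensor level and therefore bounded in every $L^r$ uniformly in $\ve$. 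Plugging these into the Leibniz expansion and using Meyer's inequality (item (b)) to absorb the derivatives of $G$ into $\|G\|_{q,k}$ delivers the desired polynomial bound; a slight decrease of the exponential rate absorbs the $\ve^{-M'}$ prefactor and yields $\|(1-\chi^\ve_N)\delta_x(X^\ve(1,x))\|_{p,-k} \le c_1 e^{-c_2/\ve^2}$ uniformly in $x \in O_N$. The main obstacle is the bookkeeping in the Leibniz expansion of $\Psi^\ve$: one must verify that no $H$-derivative of $\chi^\ve_N$ hides a negative power of $\ve$, and that the inverse Malliavin covariance, although blowing up as $\ve \searrow 0$, does so only polynomially, so that the Gaussian factor $e^{-C/\ve^2}$ from (\ref{ineq.exitU}) dominates.
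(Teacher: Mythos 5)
Your argument is correct, but it takes a more hands-on route than the paper. The paper's proof is essentially two lines given two black boxes: it applies the bilinear estimate $\| FG \|_{p,-k} \le C \| F\|_{q,k}\|G\|_{r,-k}$ (Formula (8.47) of Ikeda--Watanabe) with $F = 1-\chi^{\ve}_N$ and $G = \delta_x(X^{\ve}(1,x))$, then quotes the quantitative norm bound $\sup_x\sup_\ve \ve^{M'}\|\delta_x(X^{\ve}(1,x))\|_{r,-k}<\infty$ coming from Proposition \ref{pr.KS} together with Watanabe's pullback theorem, and finally the bound $\|1-\chi^{\ve}_N\|_{q,k}=O(e^{-C_{N,\kappa}/4\ve^2})$ from \eqref{ineq.exitU}. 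What you have done is unpack both black boxes into a single explicit integration-by-parts computation: writing $\delta_0=\p_1\cdots\p_d H$, iterating \eqref{ipb1.eq}--\eqref{eq.ibpPHI}, and checking by Leibniz bookkeeping that (i) every term carries a factor of $1-\chi^{\ve}_N$ or a derivative of $\chi^{\ve}_N$, hence is supported on the exit event, and (ii) all other factors are at worst polynomially large in $\ve^{-1}$. The underlying mechanism (integration by parts plus the exponential exit estimate, with the Gaussian factor beating the polynomial blow-up of the inverse covariance) is the same in both proofs; your version is longer but self-contained and makes visible exactly where the exponential decay enters, while the paper's version delegates the bookkeeping to the cited product estimate, where the observation that the $H$-derivatives of $\chi^{\ve}_N$ carry no negative powers of $\ve$ is absorbed into the Sobolev norm $\|1-\chi^{\ve}_N\|_{q,k}$. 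Your localization of the support of $1-\chi^{\ve}_N$ and of $D^j\chi^{\ve}_N$ on $\{|\Delta^N_\ve U^N_1|\ge \kappa/(2\sqrt 2)\}$ is consistent with the choice $\psi_N(s)=\psi(s/(\kappa/2)^2)$ with $\psi\equiv 1$ on $[-1/2,1/2]$, and your use of Proposition \ref{pr.KS} only requires ${\bf (HC)}_{x_0}$, matching the hypothesis; the index $k$ you produce depends only on $d$, as required.
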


\begin{proof}
We use \cite[p. 374, Formula (8.47)]{iwbk}:
For every $q, r \in (1, \infty)$ such that
$1/p:= 1/q+1/r <1$ and every $k \in {\mathbb N}$,
there exists a positive constant $C_{q,r,k}$ such that
\[
\| FG \|_{p, -k} \le C_{q,r,k}  \| F \|_{q, k} \| G \|_{r, -k}
\qquad
(F \in {\mathbb D}_{q,k}, G  \in {\mathbb D}_{r, -k}).
\]

We use this formula with $F= 1 - \chi^{\ve}_N$
and $G= \delta_x (X^{\ve}(1,x))$.
By Proposition \ref{pr.KS} and Watanabe's pullback theorem, 
we can find $k$ and $M' >0$ such that
\[
\sup_{x \in O} \sup_{0 < \ve \le 1}
\ve^{M'}
 \|  \delta_x (X^{\ve}(1,x)) \|_{r, -k}  <\infty
 \]
for any $r \in (1,\infty)$.
On the other hand, we can easily see from (\ref{ineq.exitU}) that 
\[
  \| 1 - \chi^{\ve}_N \|_{q, k}  = O (e^{-  C_{N,\kappa}/4 \ve^2 }) 
  \qquad
  \mbox{as $\ve \searrow 0$}
  \]
 for every $q \in (1,\infty)$ and $k \in {\mathbb N}$. 
This completes the proof.
\end{proof}

The following is a slight extension of \cite[Lemma 5.8]{tak}.
Under the equiregular H\"ormander condition near $x_0$, 
we prove a uniform version of the lemma.
Recall that, for a Wiener functional $G$,
 $\lambda [G]$ stands for the lowest eigenvalue of 
 Malliavin covariance matrix $\sigma [G]$.
\begin{proposition}\label{pr.tak58}
Assume ${\bf (ER)}_{x_0}$ and let 
$r >0$ be the constant in (\ref{ineq.qdrt})
 and $N \ge N_0$.
Then, there exist a neighborhood $O_N$ of $x_0$ 
and $\kappa_N >0$ such that, 
for all $\ve \in (0,1]$ and $x \in O_N$,  it holds that
\begin{equation}\label{ineq.key00}
\lambda [F_N (x, \Delta^N_{\ve} U^N_1)] 
\ge
r \ve^{2N_0}  \lambda [U^N_1] 
\qquad
\mbox{on $\{ |\Delta^N_{\ve} U^N_1|<\kappa_N \}$.}
\end{equation}
In particular, for $\ve \in (0,1]$ and $x \in O_N$, we have
$\lambda [F_N (x, \Delta^N_{\ve} U^N_1)] >0$ almost surely
on $\{ |\Delta^N_{\ve} U^N_1|<\kappa_N \}$ 
and
\begin{equation}\label{ineq.keyyos}
{\mathbb E} [\lambda [F_N (x, \Delta^N_{\ve} U^N_1)]^{-p} 
~;~
|\Delta^N_{\ve} U^N_1|<\kappa_N]^{1/p}
\le
r^{-1}\ve^{-2N_0}  \| \lambda [U^N_1]^{-1}\|_{L^p} <\infty
\end{equation}
for every $1<p<\infty$. 
\end{proposition}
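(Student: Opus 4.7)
The plan is to use the chain rule for the Malliavin covariance of $Y := F_N(x, \Delta^N_\ve U^N_1)$ and then extract a lower bound by restricting attention to the $\cG(N_0)$-block, where the quadratic estimate \eqref{ineq.qdrt} is already available. First, since $F_N$ is smooth in $(x,u)$ and $U_1^N \in {\mathbb D}_\infty$, the chain rule gives
\[
DY \;=\; (\partial_u F_N)(x, \Delta^N_\ve U^N_1)\cdot \Delta^N_\ve \cdot DU_1^N,
\]
so that for every $\xi \in {\mathbb R}^d$,
\[
\xi^* \sigma[Y]\,\xi \;=\; \bigl\|\, DU_1^N\cdot (\Delta^N_\ve)^* (\partial_u F_N)^* \xi\,\bigr\|_H^2 \;\ge\; \lambda[U_1^N]\cdot \bigl\|(\Delta^N_\ve)^* (\partial_u F_N)^* \xi\bigr\|^2_{{\frak g}^N({\mathbb R}^n)}.
\]

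Next, since $\Delta^N_\ve$ acts diagonally in the basis $\{{\bf e}_{[I]} : I \in \cG(N)\}$ with eigenvalue $\ve^{|I|}$, and $|I| \le N_0$ for $I \in \cG(N_0)$, I would bound
\[
\bigl\|(\Delta^N_\ve)^* v\bigr\|^2 \;=\; \sum_{I \in \cG(N)} \ve^{2|I|}(v^I)^2 \;\ge\; \ve^{2N_0} \sum_{I \in \cG(N_0)} (v^I)^2
\]
and apply it to $v = (\partial_u F_N)^* \xi$. The point is that $(v^I)_{I \in \cG(N_0)}$ is precisely the image of $\xi$ under the transpose of the $\cG(N_0)$-block $(\partial_I F_N^i(x,\Delta^N_\ve U_1^N))_{1\le i \le d,\,I\in \cG(N_0)}$, and the estimate \eqref{ineq.qdrt} asserts exactly that this matrix, applied to its own transpose, dominates $r\,{\rm Id}_d$ whenever $|\Delta^N_\ve U^N_1|\le \kappa_N$ and $x \in O_N$ (with $O_N$ and $\kappa_N$ taken from the paragraph preceding \eqref{ineq.detM}). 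Combining these three inputs gives $\lambda[Y] \ge r\,\ve^{2N_0}\,\lambda[U_1^N]$ on $\{|\Delta^N_\ve U^N_1|<\kappa_N\}$, which is \eqref{ineq.key00}.

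For the second conclusion, observe that on the same event $\lambda[U_1^N]>0$ almost surely by Proposition \ref{pr.MalU}, so the inequality \eqref{ineq.key00} forces $\lambda[Y]>0$ there and yields the pointwise bound $\lambda[Y]^{-1} \le r^{-1}\ve^{-2N_0}\lambda[U_1^N]^{-1}$; taking the $L^p$-norm restricted to the event and invoking Proposition \ref{pr.MalU} (which gives $\lambda[U_1^N]^{-1} \in \bigcap_{1<p<\infty} L^p$) delivers \eqref{ineq.keyyos}.

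The only real obstacle is bookkeeping rather than mathematical: one must shrink the neighborhood $O_N$ and the threshold $\kappa_N$ simultaneously so that the event $\{|\Delta^N_\ve U^N_1|<\kappa_N\}$ falls inside the region where \eqref{ineq.qdrt} was established. Since $M_N(x,0)={\rm Id}_d$ and the $\cG(N_0)$-block of $\partial_u F_N(x,0)$ equals $B_{N_0}(x)$ which is invertible at $x_0$ under ${\bf (ER)}_{x_0}$, a standard continuity/compactness argument — already carried out in the discussion leading to \eqref{ineq.detM}--\eqref{ineq.qdrt} — supplies the required uniform choice of $(O_N,\kappa_N)$.
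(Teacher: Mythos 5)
Your proposal is correct and follows essentially the same route as the paper: the chain rule for the $H$-derivative gives $\la\sigma[F_N(x,\Delta^N_\ve U^N_1)]z,z\ra \ge \lambda[U^N_1]\sum_{I\in\cG(N)}\ve^{2|I|}\bigl(\sum_i z^i\p_I F^i_N\bigr)^2$, one then discards the terms with $I\notin\cG(N_0)$, pulls out $\ve^{2N_0}$ using $\ve\le 1$, and invokes \eqref{ineq.qdrt}; the $L^p$ bound \eqref{ineq.keyyos} then follows by pointwise inversion and Proposition \ref{pr.MalU}. The only difference is cosmetic (you write the covariance bound in matrix form rather than componentwise), so there is nothing to add.
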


\begin{proof}
Inequality (\ref{ineq.keyyos}) is  immediate from 
(\ref{ineq.key00}) and (\ref{ineq.qdrt}).
We give a quick proof of (\ref{ineq.key00}).
In the same way as in the proof of \cite[Lemma 5.8]{tak},
it holds that, for every $z \in {\mathbb R}^d$,
\begin{align*}
\lefteqn{
\la  \sigma [F_N (x, \Delta^N_{\ve} U^N_1)]  z, z\ra
}
\\
&=
\sum_{I, J \in  \cG (N) }  \sigma [U^N_1]^{IJ}
\Bigl( \sum_{i=1}^d
z^i \ve^{|I|} \p_I F_N^i (x,  \Delta^N_{\ve} U^N_1 )
\Bigr)
\Bigl( \sum_{j=1}^d
z^j \ve^{|J|} \p_J F_N^j (x,  \Delta^N_{\ve} U^N_1 )
\Bigr)
\\
&\ge
 \lambda [U^N_1] \sum_{I \in  \cG (N) }\ve^{2 |I|} 
  \Bigl( \sum_{i=1}^d
z^i \p_I F_N^i (x,  \Delta^N_{\ve} U^N_1 )
\Bigr)^2
\\
&\ge
 \lambda [U^N_1]
  \sum_{I \in  \cG (N_0) } \ve^{2 |I|} 
    \Bigl( \sum_{i=1}^d
z^i\p_I F_N^i (x,  \Delta^N_{\ve} U^N_1 )
\Bigr)^2.
\end{align*}
The first equality is immediate from the chain rule for the $H$-derivative $D$. 
By (\ref{ineq.qdrt}), the right-hand side
is bounded from below by 
\[
\ve^{2N_0}  \lambda [U^N_1] 
\cdot
\bigl|
[ (\p_I F_N^i (x,  \Delta^N_{\ve} U^N_1 ))_{1 \le i \le d, I \in \cG (N_0)}]^*  z
\bigr|^2
\ge
r \ve^{2N_0}  \lambda [U^N_1]  |z|^2
\]
uniformly in $x \in O_N$.
\end{proof}

In what follows, 
we choose $\kappa_N >0$ as  in Proposition \ref{pr.tak58} 
and set $\psi_N (x)= \psi (x/ (\kappa_N /2)^2)$ 
and $\chi^{\ve}_N=\psi_N ( |\Delta^N_{\ve} U^N_1 |^2)$ for $N \ge N_0$.

Since non-degeneracy of 
$F_N (x, \Delta^N_{\ve} U^N_1)$ is not known, 
we cannot use the standard version of 
Watanabe's pullback (see Item {\bf (c)}, Section 2) to justify
$\delta_0 ( F_N (x, \Delta^N_{\ve} U^N_1))$.
However, 
thanks to Proposition \ref{pr.tak58} above,
a modified version of  Watanabe's pullback is available.
\begin{proposition}\label{pr.mdWat}
Assume ${\bf (ER)}_{x_0}$,
$N \ge N_0$ and let $O_N$ as in Proposition \ref{pr.tak58}.
Fix any $\ve$ and $x \in O_N$.
Then, the mapping $\cS ({\mathbb R}^d) \ni \phi
\mapsto  \chi^{\ve}_N\cdot \phi
 ( F_N (x, \Delta^N_{\ve} U^N_1) )   \in {\mathbb D}_{\infty}$
uniquely extends to a continuous linear mapping
\[
\cS^{\prime} ({\mathbb R}^d) \ni \Phi
\mapsto  \chi^{\ve}_N\cdot \Phi
 ( F_N (x, \Delta^N_{\ve} U^N_1) )   \in \tilde{\mathbb D}_{-\infty}.
 \]
\end{proposition}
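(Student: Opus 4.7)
The plan is to mimic the construction of Watanabe's pullback (Section 2, Items {\bf (c)}--{\bf (d)}), with the global non-degeneracy of $F := F_N(x, \Delta^N_\ve U^N_1)$ replaced by the localized non-degeneracy provided by Proposition \ref{pr.tak58}. Throughout, I fix $\ve \in (0,1]$ and $x \in O_N$ and abbreviate $\chi := \chi^\ve_N$. The strategy has three parts: (i) build a global smooth surrogate for $\sigma_F^{-1}$ that agrees with the true inverse on $\operatorname{supp}\chi$; (ii) plug it into the standard Malliavin integration-by-parts formula to bound $\chi\cdot\phi(F)$ in Sobolev norms by Schwartz seminorms of $\phi$; and (iii) extend to $\cS'(\mathbb R^d)$ by the structure theorem for tempered distributions.

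For step (i), I would choose an auxiliary cutoff $\tilde\psi_N \in C_c^\infty(\mathbb R)$ with $\tilde\psi_N \equiv 1$ on a neighbourhood of $\operatorname{supp}\psi_N$ and $\operatorname{supp}\tilde\psi_N \subset (-\kappa_N^2,\kappa_N^2)$, and put $\tilde\chi := \tilde\psi_N(|\Delta^N_\ve U^N_1|^2) \in \mathbb{D}_\infty$, so that $\chi\tilde\chi = \chi$. Then I would define the regularized covariance
\[ \hat\sigma^{ij} := \tilde\chi\,\sigma_F^{ij} + (1-\tilde\chi)\,\delta^{ij} \qquad (1 \le i,j \le d). \]
Inequality (\ref{ineq.key00}) gives $\lambda[F] \ge r\ve^{2N_0}\lambda[U^N_1]$ on $\operatorname{supp}\tilde\chi$, whereas $\hat\sigma$ reduces to the identity elsewhere; hence $\lambda[\hat\sigma] \ge \min(1, r\ve^{2N_0}\lambda[U^N_1])$ almost surely, and its reciprocal lies in $\bigcap_p L^p$ by Proposition \ref{pr.MalU}. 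Applying $D\hat\sigma^{-1} = -\hat\sigma^{-1}(D\hat\sigma)\hat\sigma^{-1}$ inductively, combined with the polynomial-growth bounds of Remark \ref{re.pol}, shows that $\hat\gamma^{ij} := (\hat\sigma^{-1})^{ij} \in \mathbb{D}_\infty$ and coincides with $\gamma_F^{ij}$ on $\operatorname{supp}\tilde\chi$.

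For step (ii), I would set $\hat\Phi_i(G) := \sum_{j=1}^d D^*(\hat\gamma^{ij}\, G\, DF^j) \in \mathbb{D}_\infty$ and iterate $\hat\Phi^\alpha := \hat\Phi_{\alpha_1}\cdots\hat\Phi_{\alpha_m}$ for multi-indices $\alpha$. Because $\chi G$ is supported where $\hat\gamma = \gamma_F$, the ordinary Malliavin integration-by-parts formula (\ref{ipb1.eq})--(\ref{eq.ibpPHI}) applies unchanged and gives, for every $\phi \in \cS(\mathbb R^d)$ and $G \in \mathbb{D}_\infty$,
\[ \mathbb E\bigl[\chi\,(\partial^\alpha\phi)(F)\, G\bigr] = (-1)^{|\alpha|}\,\mathbb E\bigl[\phi(F)\,\hat\Phi^\alpha(\chi G)\bigr], \]
and Meyer's equivalence produces indices $p,k$ depending only on $|\alpha|$ with $\|\hat\Phi^\alpha(\chi G)\|_{L^{p'}} \le C_\alpha\|G\|_{p,k}$. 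For step (iii), I would invoke the structure theorem to write any $\Phi \in \cS'(\mathbb R^d)$ as a finite sum $\Phi = \sum_{|\alpha| \le m}(-1)^{|\alpha|}\partial^\alpha f_\alpha$ with polynomially bounded continuous $f_\alpha$, and then \emph{define}
\[ \langle \chi\cdot \Phi(F),\, G\rangle := \sum_{|\alpha|\le m} \mathbb E\bigl[f_\alpha(F)\,\hat\Phi^\alpha(\chi G)\bigr]. \]
This pairing agrees with $\mathbb E[\chi\phi(F) G]$ when $\Phi = \phi \in \cS(\mathbb R^d)$ and, by the preceding bound, extends to a continuous linear functional on $\mathbb{D}_\infty$, hence to an element of $\tilde{\mathbb D}_{-\infty}$. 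Independence of the decomposition and uniqueness of the extension both follow from the density of $\cS(\mathbb R^d)$ in $\cS'(\mathbb R^d)$.

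The main obstacle is step (i): producing $\hat\sigma^{-1}$ that genuinely lies in $\mathbb{D}_\infty$ rather than being merely pointwise defined on the random set $\{\tilde\chi>0\}$. The additive regularization above handles this because $\hat\sigma$ is almost surely uniformly positive definite with an $L^p$ reciprocal eigenvalue, but one must carefully check that every Sobolev norm of $\hat\sigma^{-1}$ is finite, and this relies on the triangular structure of Remark \ref{re.pol} controlling the higher Malliavin derivatives of $U^N_1$.
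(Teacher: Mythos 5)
Your overall strategy is sound and, in substance, it is the standard localized-pullback construction: the paper itself does not prove this proposition but simply points to the integrability \eqref{ineq.keyyos} as the key and refers to Yoshida \cite{yo} for details, so your proposal is an honest attempt to supply what the paper outsources. Steps (i) and (iii) are fine (one small imprecision: $\hat\sigma=\sigma_F$ only on $\{\tilde\chi=1\}$, not on all of $\operatorname{supp}\tilde\chi$; what saves you is that $\{\chi\neq0\}\subset\{\tilde\chi=1\}$, which is the statement you actually use).

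The genuine gap is in step (ii), in the assertion that ``the ordinary Malliavin integration-by-parts formula applies unchanged'' for the \emph{iterated} operators $\hat\Phi^\alpha=\hat\Phi_{\alpha_1}\cdots\hat\Phi_{\alpha_m}$. The identity underlying \eqref{ipb1.eq} requires $\hat\gamma\,\sigma_F=\mathrm{Id}$ on the support of the test functional, and this holds only on $\{\tilde\chi=1\}$. The first application is fine because $\chi G$ vanishes off that set; but $\hat\Phi_{\alpha_m}(\chi G)=\sum_j D^*(\hat\gamma^{\alpha_m j}\chi G\,DF^j)$ is no longer manifestly of the form $\chi\cdot(\cdots)$, so the second integration by parts does not ``apply unchanged''---off $\{\tilde\chi=1\}$ one has $\sum_k\sigma_F^{jk}\hat\gamma^{ki}\neq\delta^{ji}$ and the identity you need fails there. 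You must either (a) prove that each iterate still vanishes a.s.\ outside $\{|\Delta^N_\ve U^N_1|\le\kappa_N/2\}\subset\{\tilde\chi=1\}$, which is true but requires invoking the locality of $D$ and of the divergence $D^*$ on $\mathbb{D}^{1,2}$ (the term $\langle D\chi,\cdot\rangle_H$ is supported where $\psi'\neq0$, and $D^*$ of an $H$-valued functional vanishing a.s.\ on a set vanishes a.s.\ on that set); or (b) insert a fresh cutoff at each stage, taken from a nested family $\chi^\ve_{N,1},\chi^\ve_{N,2},\dots$ with $\psi_i\equiv1$ on $\operatorname{supp}\psi_{i-1}$, which is exactly the device the authors use in the proof of Lemma \ref{lm.Xaprx} (the definition of $\Phi_{(\alpha)}$ there). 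Either repair closes the gap; without one of them the chain of integrations by parts is not justified.
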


\begin{proof}
This fact is actually well-known to experts of Malliavin calculus.
The key point is the integrability (\ref{ineq.keyyos}) in Proposition \ref{pr.tak58}.
For a detailed proof, see Yoshida \cite{yo}.
\end{proof}

The next lemma states that $\delta_x (X^{\ve}(1,x))$
can be approximated by 
$\delta_0 ( F_N (x, \Delta^N_{\ve} U^N_1))$
uniformly in $x$ if $N$ is large enough. 
Therefore, the problem reduces 
to the expansion of
 the latter Watanabe distribution.

\begin{lemma}\label{lm.Xaprx}
Assume ${\bf (ER)}_{x_0}$. 
Then, there exist $k>0$,
a sequence of 
$\{ O_{N} \}_{N \ge N_0}$ neighborhoods of $x_0$
and a  sequence $\{ l_{N} \}_{N \ge N_0}$ of real numbers  diverging to $+\infty$ such that, for every $p \in (1,\infty)$ and $N \ge N_0$, 
 \[
\sup_{x \in O_N}  
\| \delta_x (X^{\ve}(1,x)) 
- \chi^{\ve}_N\cdot \delta_0
 ( F_N (x, \Delta^N_{\ve} U^N_1) )\|_{p, -k} 
=
O (\ve^{l_N})
\qquad
\mbox{as $\ve\searrow 0$.}
\]
\end{lemma}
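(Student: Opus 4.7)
The plan is to reduce, via Lemma \ref{lm.Xloc}, to estimating the Watanabe--Sobolev norm of
$$
\chi^{\ve}_N \bigl[\,\delta_0(X^{\ve}(1,x)-x)\;-\;\delta_0(F_N(x,\Delta^N_{\ve}U^N_1))\,\bigr]
$$
uniformly in $x\in O_N$. Setting $A=F_N(x,\Delta^N_{\ve}U^N_1)$ and $B=\hat{R}^{\ve}_{N+1}(1,x)$, Proposition \ref{pr.ste2} gives $X^{\ve}(1,x)-x=A+B$ with $\sup_{x\in K}\|B\|_{p,k}\le C\ve^{N+1}$ for every $p,k$. The functional $A+B$ is non-degenerate by Proposition \ref{pr.KS}, while $A$ is non-degenerate on the support of $\chi^{\ve}_N$ in the sense of Proposition \ref{pr.tak58}; hence $\chi^{\ve}_N\delta_0(A+B)$ and $\chi^{\ve}_N\delta_0(A)$ are both well-defined Watanabe distributions, the latter via Proposition \ref{pr.mdWat}.

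Next I would apply a distributional Taylor expansion: for any order $M\ge 1$,
$$
\delta_0(A+B) \;=\; \sum_{|\alpha|\le M}\frac{B^{\alpha}}{\alpha!}(\p^{\alpha}\delta_0)(A) \;+\; R_M(A,B),
$$
where the integral remainder is
$$
R_M(A,B)=\sum_{|\alpha|=M+1}\frac{M+1}{\alpha!}\int_0^1(1-s)^M B^{\alpha}(\p^{\alpha}\delta_0)(A+sB)\,ds.
$$
After multiplying through by $\chi^{\ve}_N$, the $|\alpha|=0$ term on the right cancels $\chi^{\ve}_N\delta_0(A)$, so it remains to estimate (i) each term with $1\le|\alpha|\le M$ and (ii) the remainder $\chi^{\ve}_N R_M$.

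For (i), I would apply the integration-by-parts formula \eqref{ipb1.eq}--\eqref{eq.ibpPHI} to trade each derivative $\p_{i}$ in $\p^{\alpha}$ against the Malliavin covariance $\gamma[A]=\sigma[A]^{-1}$ and $H$-derivatives of $A$. By Proposition \ref{pr.tak58}, on the support of $\chi^{\ve}_N$ the smallest eigenvalue of $\sigma[A]$ is bounded below by $r\ve^{2N_0}\lambda[U^N_1]$, so \eqref{ineq.keyyos} yields a uniform $L^p$-bound of order $\ve^{-2N_0}$ on each entry of $\gamma[A]$, and the $H$-derivatives of $A$ are controlled polynomially in $\ve$ via the scaling of $\Delta^N_{\ve}U^N_1$. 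Combining these with Meyer's equivalence (Item~\textbf{(b)} of Section~2) yields an estimate of the form
$$
\bigl\|\chi^{\ve}_N B^{\alpha}(\p^{\alpha}\delta_0)(A)\bigr\|_{p,-k'} \;\le\; C_{\alpha}\,\ve^{(N+1)|\alpha|-c|\alpha|}
$$
for some fixed $c$ depending only on $N_0$ and $d$, so for $N$ large each such term is $O(\ve^{\ell_{\alpha}})$ with $\ell_{\alpha}\to\infty$.

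The principal obstacle is handling the remainder $\chi^{\ve}_N R_M$, which requires non-degeneracy of the interpolated functional $A+sB$ uniformly in $s\in[0,1]$, $x\in O_N$, and $\ve\in(0,\ve_0]$. Since $\|DB\|_{L^p}=O(\ve^{N+1})$ whereas $\lambda[\sigma[A]]\gtrsim\ve^{2N_0}\lambda[U^N_1]$ on $\{\chi^{\ve}_N\neq 0\}$, once $N+1>N_0$ the perturbation is dominated by the main term, and a Cauchy--Schwarz argument applied to $\|D(z\cdot A)+sD(z\cdot B)\|_H$ gives $\sigma[A+sB]\ge\tfrac12\sigma[A]$ on the support of $\chi^{\ve}_N$ outside an event of probability $O(\exp(-c/\ve^2))$, absorbed into Lemma \ref{lm.Xloc}. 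The same integration-by-parts procedure then bounds $\|\chi^{\ve}_N R_M\|_{p,-k''}\le C_M\,\ve^{(N+1-c)(M+1)}$. Choosing $M=M(N)$ large enough produces the desired estimate with $l_N\to\infty$ as $N\to\infty$. Verifying that every constant in the above chain is uniform in $x\in O_N$---which ultimately reduces to the uniform statements already built into Propositions \ref{pr.ste2}, \ref{pr.KS}, and \ref{pr.tak58}---will be the main technical effort, and the argument otherwise closely parallels the pointwise version in \cite{tak}.
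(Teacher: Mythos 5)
Your reduction via Lemma \ref{lm.Xloc} and your use of Propositions \ref{pr.ste2}, \ref{pr.KS} and \ref{pr.tak58} match the paper, but from that point on you take a genuinely different route: you Taylor-expand $\delta_0$ around $A=F_N(x,\Delta^N_\ve U_1^N)$ in powers of the remainder $B=\hat R^{\ve}_{N+1}$. The paper instead integrates by parts $2d$ times on \emph{each} of $\chi^{\ve}_N\delta_0(A^{\ve})$ and $\chi^{\ve}_N\delta_0(B^{\ve})$ separately, converting $\delta_0$ into the Lipschitz function $g(x)=\prod_i(x_i\vee 0)$, and then compares the two resulting pairings directly; the difference is controlled by $\|g(A^{\ve})-g(B^{\ve})\|_{L^p}\le\|A^{\ve}-B^{\ve}\|_{L^p}=O(\ve^{N+1})$ together with the closeness of the two weights $\Phi_{(\alpha)}$. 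The advantage of that route is that it only ever requires non-degeneracy of the two endpoint functionals, for which Propositions \ref{pr.KS} and \ref{pr.tak58} are available; no interpolated functional and no Taylor remainder appear.

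This is exactly where your argument has a genuine gap: the remainder $R_M$ involves $(\p^{\alpha}\delta_0)(A+sB)$ for $0<s<1$, and no non-degeneracy whatsoever is established for $A+sB$, so this pullback is not even defined, let alone estimated. Your proposed fix does not close the hole. First, the exceptional event is not of probability $O(e^{-c/\ve^2})$ and is not the event handled by Lemma \ref{lm.Xloc}: that lemma (via \eqref{ineq.exitU}) controls $\{\sup_s|\Delta^N_\ve U^N_s|>\kappa\}$, whereas your Cauchy--Schwarz comparison fails on the event where $\lambda[U_1^N]$ is small relative to $\|DB\|_{H\otimes{\mathbb R}^d}^2/\ve^{2N_0}$; since $\lambda[U_1^N]^{-1}$ merely has all moments, that event has probability $O(\ve^{L})$ for every $L$, not exponential decay, and in any case lies outside the scope of $\chi^{\ve}_N$. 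Second, even granting such a bound, one cannot pair a Watanabe distribution with the indicator of an event; you would need a further \emph{smooth} cutoff built from (a smooth surrogate for) $\lambda[U_1^N]$ and $\|DB\|$, a modified pullback in the spirit of Proposition \ref{pr.mdWat} for the whole family $\{A+sB\}_{s\in[0,1]}$, and uniform-in-$s$ moment bounds on $\gamma[A+sB]$ to run the integration by parts on the remainder. None of this is supplied, and it is precisely the machinery the paper's Lipschitz-function device is designed to avoid. The terms with $1\le|\alpha|\le M$ are fine in principle (your bookkeeping $\ve^{(N+1)|\alpha|}$ against $\ve^{-c|\alpha|}$ from \eqref{ineq.keyyos} is the right shape), but as written the proof of the lemma is incomplete at the remainder.
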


\begin{proof}
Due to Lemma \ref{lm.Xloc},
it is sufficient to show that
\begin{equation}\label{ineq.suf39}
\sup_{x \in O_N}  
\| \chi^{\ve}_N\cdot \delta_0 (X^{\ve}(1,x) -x) 
- \chi^{\ve}_N\cdot \delta_0
 ( F_N (x, \Delta^N_{\ve} U^N_1) )\|_{p, -k} 
=
O (\ve^{l_N})
\qquad
\mbox{as $\ve\searrow 0$.}
\end{equation}

As always, the key tool is the integration by parts formula 
for Watanabe distributions.
We also use the estimates in
Proposition \ref{pr.ste2} {\rm (ii)},  Proposition \ref{pr.KS},  
Proposition \ref{pr.tak58},   
Proposition \ref{pr.mdWat}.
In this proof we write $A^{\ve} = X^{\ve}(1,x) -x$ and
$B^{\ve} = F_N (x, \Delta^N_{\ve} U^N_1) )$
for simplicity.

First, we  prove the case $d=1$ to observe what is happening.
Set $g (x) = x \vee 0$ for $x \in {\mathbb R}$.
Then, $g^{\prime\prime} (x) = \delta_0 (x)$ in the distributional sense.
Choose smooth functions $\psi_i: {\mathbb R} \to {\mathbb R}$ ($i=1, 2, 3$) 
so that $\psi_1 =\psi$, 
$\psi_i \equiv 1$ on the support of $\psi_{i-1}$ ($i=2, 3$),
and 
 the support of $\psi_3$ is contained in $(-2, 2)$.
Set $\chi^{\ve}_{N,i} = \psi_i (  |\Delta^N_{\ve} U^N_1 |^2 / (\kappa_N/2)^2)$.
Note that Proposition \ref{pr.mdWat} still holds 
even if $\chi^{\ve}_{N} =\chi^{\ve}_{N,1}$ is replaced by 
$\chi^{\ve}_{N,2}$ or $\chi^{\ve}_{N,3}$.
Note also that $\|\chi^{\ve}_{N,i} \|_{p, k}$ is bounded in $\ve$
for any $1<p< \infty, k \ge 0, 1 \le i \le 3$

Take any $G \in {\mathbb D}_{\infty}$.
By integration by parts formula and the way $\psi_i$ ($i=1, 2, 3$) are defined, 
we have
\begin{align*}
\la \chi^{\ve}_N\cdot \delta_0 ( B^{\ve}) , G \ra
&=
\la \chi^{\ve}_{N, 2} \chi^{\ve}_{N, 3}  
g^{\prime\prime} ( B^{\ve}), \chi^{\ve}_{N, 1}G \ra
\\
&=
\Bigl\la D [
\chi^{\ve}_{N, 2} \chi^{\ve}_{N, 3}  g^{\prime} ( B^{\ve}) ], 
\frac{  D B^{\ve}  }{ \| D B^{\ve} \|_{H}^2} \chi^{\ve}_{N, 1} G 
\Bigr\ra
\\
&=
\Bigl\la 
\chi^{\ve}_{N, 2} \chi^{\ve}_{N, 3} g^{\prime} ( B^{\ve}) , 
D^* \bigl[
\frac{  D B^{\ve}  }{ \| D B^{\ve} \|_{H}^2} \chi^{\ve}_{N, 1} G 
\bigr]
\Bigr\ra,
\end{align*}
where $D$ is the $H$-derivative (the gradient operator) and
$D^*$ is its adjoint. 
(Thanks to Proposition \ref{pr.tak58}, the right-hand side is well-defined.)
Note that $\| D B^{\ve} \|_{H}^2 = \det \sigma [B^{\ve}]$ since $d=1$. 
Therefore, the second component of the pairing on the right-hand side 
coincides at least formally with $\Phi$ in (\ref{eq.ibpPHI}) 
with $m=1$
and  $F$ and $G$ being replaced by
$B^{\ve}$ and  $\chi^{\ve}_{N, 1} G$, respectively.

Using the formula again, we have
\begin{align*}
\la \chi^{\ve}_N\cdot \delta_0 ( B^{\ve}) , G \ra
&=
\Bigl\la 
  g ( B^{\ve}) , 
 \chi^{\ve}_{N, 3}
D^* \Bigl[ 
\frac{  D B^{\ve}  }{ \| D B^{\ve} \|_{H}^2}
\chi^{\ve}_{N, 2}
D^* \bigl[
\frac{  D B^{\ve}  }{ \| D B^{\ve} \|_{H}^2} \chi^{\ve}_{N, 1} G 
\bigr]
\Bigr]
\Bigr\ra
\nn\\
&=
\Bigl\la 
  g ( B^{\ve}) , 
D^* \Bigl[ 
\frac{  D B^{\ve}  }{ \| D B^{\ve} \|_{H}^2}
\chi^{\ve}_{N, 2}
D^* \bigl[
\frac{  D B^{\ve}  }{ \| D B^{\ve} \|_{H}^2} \chi^{\ve}_{N, 1} G 
\bigr]
\Bigr]
\Bigr\ra.
\end{align*}
This equation still holds for $A^{\ve}$ instead of $B^{\ve}$ for the same reason.
Observe that on the right-hand side 
$B^{\ve}$ is plugged into a (Lipschitz) continuous function $g$, 
not a Schwartz distribution. 
Hence, the difference $\|g ( A^{\ve}) - g ( B^{\ve})  \|_{L^p}$ 
is dominated by 
$\| A^{\ve} - B^{\ve}  \|_{L^p} = O (\ve^{N+1})$,
where Proposition \ref{pr.ste2} {\rm (ii)} is used.

By straight forward calculations, we can show the following estimate:
There exist constants $a\in {\mathbb N}$ 
(independent of $\ve, N, p, x$) 
and $C_p >0$  (independent of $\ve, N, x$) such that 
\begin{equation}\label{ineq.AB}
| \la \chi^{\ve}_N\cdot \delta_0 ( A^{\ve}) 
 -  \chi^{\ve}_N\cdot \delta_0 ( B^{\ve}) , G \ra|
  \le 
  C_p \| G\|_{q, 2} \,   \ve^{ N+1 - a (M+N_0) }
\end{equation}
for every $p \in (1, \infty)$, where $1/p +1/q =1$
and every $\ve \in (0,1]$ and $x \in O$.
Here, we used Propositions \ref{pr.ste2} {\rm (ii)},  \ref{pr.KS},  
\ref{pr.tak58},  and \ref{pr.mdWat}
($M$ is the positive constant in Propositions \ref{pr.KS}).
This impies (\ref{ineq.suf39}) when $d=1$ with
$k=2$ and $l_N = N+1 - a (M+N_0)$.

The proof for $d \ge 2$ is essentially the same in spirit,
but 
the notations get quite complicated
and we have to use the  integration by parts formula many times
($2d$-times is enough).
Note that the differentiability index 
$-k$ in (\ref{ineq.suf39}) is determined by this number 
and hence depends only on $d$.

Set $g (x) = \prod_{i=1}^d (x_i \vee 0)$ for $x=(x_1, \ldots, x_d) \in {\mathbb R}^d$.
Then, $(\partial_1^2  \cdots \partial_d^2 g) (x) = \delta_0 (x)$ in the distributional sense.
Choose smooth functions $\psi_i: {\mathbb R} \to {\mathbb R}$ ($i \ge 0$) 
so that $\psi_0 =\psi$, 
$\psi_i \equiv 1$ on the support of $\psi_{i-1}$ ($i \ge 1$),
and 
 the support of $\psi_{i}$ is contained in $(-2, 2)$.
Set $\chi^{\ve}_{N,i} = \psi_i (  |\Delta^N_{\ve} U^N_1 |^2 / (\kappa_N/2)^2)$.
For every $i \ge 0$, 
Proposition \ref{pr.mdWat} still holds for $\chi^{\ve}_{N,i}$  
 and $\|\chi^{\ve}_{N,i} \|_{p, k}$ is bounded in $\ve$
for any $1<p< \infty, k \ge 0$.

For a multi-index $\alpha=(\alpha_1,\dots,\alpha_d) \in {\mathbb N}^d$, 
set 
$i_\alpha=\max\{i;\alpha_i\ne0\}$ and define
$\alpha^\prime=(\alpha_1-\delta_{1i_\alpha},\dots,
 \alpha_d-\delta_{di_\alpha})$, $\delta_{ij}$ being Kronecker's
 delta.
 We define $\Phi_{(\alpha)}$ with respect to $B^{\ve}$  as follows.
If $|\alpha|:=\sum_{k=1}^d \alpha_k=1$,  we set
\[
    \Phi_{(\alpha)}(\,\cdot\,  ;G)
    =\Phi_{i_\alpha}(\,\cdot\,  ;\chi^{\ve}_{N,1} G).
\]
Recall that $\Phi_i$ is given in (\ref{eq.ibpPHI}) with $F$ 
being replaced by $B^{\ve}$.
Thanks to the ``cutoff" functional $\chi^{\ve}_{N,1}$, 
$\Phi_{i_\alpha}(\,\cdot\,  ;\chi^{\ve}_{N,1} G)$ is well-defined, 
though $B^{\ve}$ is not non-degenerate in the standard sense of Malliavin calculus.
If $|\alpha| \ge 2$, we set
\[
    \Phi_{(\alpha)}(\,\cdot\,  ;G)
    =\Phi_{ i_\alpha  }  
    \bigl( \,\cdot\,;
       \chi^{\ve}_{N, |\alpha|}  \Phi_{ (\alpha^\prime) }(\cdot;G) \bigr).
\]

Using the integration by parts formula (\ref{ipb1.eq}) repeatedly
in the same way as above,
we can show that
\begin{equation}\label{tn.eq3}
\la \chi^{\ve}_N\cdot \delta_0 ( B^{\ve}) , G \ra
 =
 \la
 g ( B^{\ve}),
   \Phi_{(\alpha)}(\,\cdot\,  ;G) \ra
\end{equation}
for every $G\in{\mathbb D}_{\infty}$, 
where $\alpha =(2,2, \ldots, 2) \in {\mathbb N}^d$.
Note  that 
(\ref{tn.eq3}) can be viewed as the definition of the Watanabe distribution
$\chi^{\ve}_N\cdot \delta_0 ( B^{\ve})$.

One can also define $  \Phi_{(\alpha)}(\,\cdot\,  ;G)$ for $A^{\ve}$ instead of $B^{\ve}$
in the same way.
Then,  (\ref{tn.eq3}) holds for $A^{\ve}$, too.
Once we have (\ref{tn.eq3}) for both $B^{\ve}$ and $A^{\ve}$,
it is straightforward to check that (\ref{ineq.AB}) also holds 
in the multi-dimensional case 
for with the differentiability index $2d$ instead of $2$
 (and possibly different $a$).
                 \end{proof}

In what follows we expand 
$\chi^{\ve}_N\cdot \delta_0 ( F_N (x, \Delta^N_{\ve} U^N_1) )$
for each fixed $N \ge N_0$.
In the next lemma the same
$\ve_0 =\ve_0 (N)$ as in 
Lemmas \ref{lm.veLA} and \ref{lm.nondegG} will do.
Note that (\ref{ineq.detM}) is implicitly used.

\begin{lemma}\label{lm.hnki}
Assume ${\bf (ER)}_{x_0}$ and $N \ge N_0$.
Then, there exist
a negihborhood $O_N$ of $x_0$ and 
$\ve_0 =\ve_0 (N) \in (0,1]$ such that 
\begin{equation}\label{eq.key}
\chi^{\ve}_N\cdot \delta_0 ( F_N (x, \Delta^N_{\ve} U^N_1) )
=
\ve^{-\nu} |\det B_{\cH}(x) |^{-1}
\frac{ \chi^{\ve}_N}
{  \det M_N (x, \Delta_{\ve}^N U_1^N)}
\cdot
\delta_0 ( \tilde{\Gamma}_N^{\ve} (x) U_1^N)
\end{equation}
holds for all $x \in O_N$ and $\ve \in (0,\ve_0]$.
Here, 
$\delta_0$ on the right-hand side is the delta function 
defined on ${\mathbb R} \la \cH\ra$.
\end{lemma}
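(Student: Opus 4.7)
The plan is to combine an algebraic rewriting of $F_N$ with a random change of variables for the delta function at the level of Watanabe distributions.

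First I would derive the key algebraic identity. Since $\Gamma_N(x) = B_\cH(x)^{-1} B_N(x)$ by definition, $B_N(x) = B_\cH(x) \Gamma_N(x)$ as linear maps ${\frak g}^N({\mathbb R}^n) \to {\mathbb R}^d$. Combined with Lemma~\ref{lm.veLA}(2), which gives $\Gamma_N(x) \Delta_\ve^N = \Delta_\ve^\cH \tilde\Gamma_N^\ve(x)$, this yields
\[
F_N(x, \Delta_\ve^N U_1^N) = A \cdot V, \qquad A := M_N(x, \Delta_\ve^N U_1^N)\, B_\cH(x)\, \Delta_\ve^\cH, \qquad V := \tilde\Gamma_N^\ve(x) U_1^N.
\]
Here $V$ is uniformly Malliavin non-degenerate by Lemma~\ref{lm.nondegG}, and $A$ is a smooth $d \times d$ matrix-valued Wiener functional. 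Shrinking $O_N$ and $\ve_0$ and choosing $\kappa_N$ to match (\ref{ineq.detM}), I may arrange that $\det M_N \ge 1/2$ on $\{|\Delta_\ve^N U_1^N| \le \kappa_N\} \supset \mathrm{supp}(\chi^\ve_N)$; consequently $A$ is invertible there with $|\det A| = \det M_N \cdot |\det B_\cH(x)| \cdot \ve^\nu$.

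The identity to prove is the Wiener-functional version of the classical formula $\delta_0(Av) = |\det A|^{-1} \delta_0(v)$ (for fixed invertible $A$, as distributions in $v$). I would approximate $\delta_0$ by Gaussian mollifiers $\phi_\eta(y) = (2\pi\eta)^{-d/2} \exp(-|y|^2/2\eta)$. By Proposition~\ref{pr.mdWat}, $\chi^\ve_N \phi_\eta(AV) \to \chi^\ve_N \delta_0(F_N(x, \Delta_\ve^N U_1^N))$ in $\tilde{{\mathbb D}}_{-\infty}$ as $\eta \downarrow 0$. The right-hand side of the claim is well-defined via the standard Watanabe pullback, since $\chi^\ve_N |\det B_\cH(x)|^{-1} (\det M_N)^{-1} \in {\mathbb D}_\infty$ (smooth and bounded on the support of $\chi^\ve_N$) and $V$ is non-degenerate. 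Pairing against arbitrary $G \in {\mathbb D}_\infty$, it therefore suffices to establish
\[
\lim_{\eta \downarrow 0} {\mathbb E}\bigl[\chi^\ve_N\, \phi_\eta(AV)\, G\bigr] = \bigl\langle \chi^\ve_N\, |\det A|^{-1}\, \delta_0(V),\, G \bigr\rangle.
\]

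To establish this limit, I would disintegrate via the smooth density $p_V$ of $V$,
\[
{\mathbb E}[\chi^\ve_N \phi_\eta(AV) G] = \int_{{\mathbb R}^d} {\mathbb E}\bigl[\chi^\ve_N G\, \phi_\eta(A v) \bigm| V = v\bigr] p_V(v)\, dv,
\]
and use that for each $\omega \in \mathrm{supp}(\chi^\ve_N)$ the function $\phi_\eta(A(\omega)\,\cdot\,)$ converges weakly in $v$ to $|\det A(\omega)|^{-1} \delta_0$ as $\eta \downarrow 0$. Substituting $|\det A|^{-1} = \ve^{-\nu} |\det B_\cH(x)|^{-1} (\det M_N)^{-1}$ then produces the asserted identity. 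The main obstacle is rigorously interchanging $\lim_\eta$ with the $v$-integration, since the mollifier depends on $\omega$ through the random matrix $A$. A more robust alternative, modeled on the integration-by-parts procedure used in the proof of Lemma~\ref{lm.Xaprx}, would be to integrate by parts $2d$ times against $\phi_\eta$ to rewrite the pairing in terms of a smooth function of $AV$ (e.g.\ $y \mapsto \prod_{i=1}^d (y_i \vee 0)$) multiplied by Malliavin weights; the limit $\eta \downarrow 0$ then becomes a pointwise Lipschitz evaluation, and the random linear change of variables can be executed directly on the support of the cutoff using the uniform control of $|\det A|^{-1}$ from Proposition~\ref{pr.tak58}.
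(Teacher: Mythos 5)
Your algebraic setup coincides exactly with the paper's: the identity $F_N(x,\Delta^N_\ve U_1^N)=M_N(x,\Delta^N_\ve U_1^N)\,B_\cH(x)\,\Delta^\cH_\ve\,\tilde\Gamma_N^\ve(x)U_1^N$, the determinant bookkeeping $|\det A|=\det M_N\cdot|\det B_\cH(x)|\cdot\ve^{\nu}$, the mollifier approximation, and the appeal to Proposition \ref{pr.mdWat} are all the right ingredients. The gap is in the step you yourself flag, and your fallback does not close it. The matrix $A$ depends on the \emph{whole} of $U_1^N$ through $M_N(x,\Delta_\ve^N U_1^N)$, not on $V=\tilde\Gamma_N^\ve(x)U_1^N$ alone; after disintegrating over $V$ only, $A$ remains genuinely random inside the conditional expectation, so the classical rule $\delta_0(Av)=|\det A|^{-1}\delta_0(v)$ cannot be applied ``for each $\omega$'' and then integrated without justifying the interchange of $\eta\downarrow0$ with both the $dv$-integration and the conditioning. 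Your alternative --- integrating by parts $2d$ times to replace $\delta_0$ by a Lipschitz function of $AV$ --- tames the singularity but still leaves you performing a linear change of variables by a random matrix correlated with $V$ inside a generalized expectation, which is precisely the identity to be proved; uniform control of $|\det A|^{-1}$ from Proposition \ref{pr.tak58} does not by itself resolve that correlation.

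The paper's resolution is to disintegrate over the \emph{full} non-degenerate vector $(V,W)=\begin{pmatrix}\tilde\Gamma_N^\ve(x)\\ P_N\end{pmatrix}U_1^N$, whose pairing with $\delta_{(v,w)}$ is well defined by Lemma \ref{lm.nondegG}. Since this vector is an invertible linear image of $U_1^N$, one has $U_1^N=C^{-1}(v,w)$ inside the resulting $dv\,dw$-integral, so $M_N$ and hence $A$ become \emph{deterministic} functions of the integration variables. The rescaling $v\mapsto(\Delta_\ve^\cH)^{-1}\kappa v$ (producing the factor $\ve^{-\nu}$ via Lemma \ref{lm.veLA}(3)), the dominated-convergence passage $\kappa\searrow0$ (using the cutoff and (\ref{ineq.detM})), and the final change of variables by $M_N(\cdot)B_\cH(x)$ are then ordinary Lebesgue-measure manipulations, and $\int dw\,\delta_{(0,w)}(V,W)=\delta_0(V)$ recovers the marginal. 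If you augment your disintegration from $V$ to $(V,W)$ in this way, your argument goes through; as written, the random change of variables is not justified.
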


\begin{proof}
We follow \cite[pp. 189--191]{tak}.
Since (\ref{eq.key}) is an equality and have nothing to do 
with the uniformity in $x$ of the asymptotic expansion, 
some parts of the proof here is not so detailed as the corresponding part in \cite{tak}. 

It is easy to see that
\[
U_1^N 
=
\begin{bmatrix}
\tilde{\Gamma}_N^{\ve} (x)  \\
P_N
\end{bmatrix}^{-1}
\begin{bmatrix}
\tilde{\Gamma}_N^{\ve} (x)  \\
P_N
\end{bmatrix}
U_1^N
=
\begin{bmatrix}
\tilde{\Gamma}_N^{\ve} (x)  \\
P_N
\end{bmatrix}^{-1}
\begin{bmatrix}
\tilde{\Gamma}_N^{\ve} (x)  U_1^N\\
P_N U_1^N
\end{bmatrix}.
\]
From   
Lemma \ref{lm.veLA} (2) and  an obvious fact 
that $\Gamma_N = B_{\cH}^{-1} B_N$  we see easily that
\begin{eqnarray*}
 F_N (x, \Delta^N_{\ve} U^N_1) 
=
M_N (x, \Delta_{\ve}^N U_1^N) B_{\cH} (x) 
\Delta^{\cH}_{\ve} \tilde{\Gamma}_N^{\ve} (x) U^N_1.
\end{eqnarray*}

Next, take a non-negative test function $g$ on ${\mathbb R}^d$
such that $\int g =1$ and set 
$g_{\kappa} (x) = {\kappa}^{-d} g (x/ \kappa)$ for $\kappa >0$.
Then, $g_{\kappa} \to \delta_0$ in $\cS^{\prime} ({\mathbb R}^d)$
as $\kappa \to 0$.
By the modified version of Watanabe's theory (Proposition \ref{pr.mdWat}), 
\[
\chi^{\ve}_N\cdot g_{\kappa} ( F_N (x, \Delta^N_{\ve} U^N_1) )
\to 
\chi^{\ve}_N\cdot \delta_0 ( F_N (x, \Delta^N_{\ve} U^N_1) )
\]
in $\tilde{\mathbb D}_{- \infty}$ as $\kappa \to 0$.

Before we start computing this quantity, 
we set some notations for simplicity.
Set  $T= \Delta^N_{\ve}$,
\[
\begin{bmatrix}
V\\
W
\end{bmatrix}
=
\begin{bmatrix}
\tilde{\Gamma}_N^{\ve} (x)  U_1^N\\
P_N U_1^N
\end{bmatrix}
\quad
\mbox{and}
\quad
C=
\begin{bmatrix}
\tilde{\Gamma}_N^{\ve} (x)  \\
P_N
\end{bmatrix},
\quad
\mbox{then}
\quad
U_1^N
=
C^{-1 }
\begin{bmatrix}
V\\
W
\end{bmatrix}.
\]
Then, we have for every $G \in {\mathbb D}_{\infty}$ that
\begin{align}
\lefteqn{
{\mathbb E} [G \chi^{\ve}_N\cdot g_{\kappa} ( F_N (x, \Delta^N_{\ve} U^N_1) ) ]
}
\nn\\
&=
{\mathbb E} \Bigl[  G
\psi_N  \Bigl( 
\Bigl|
TC^{-1}
\begin{bmatrix}
V\\
W
\end{bmatrix}  
\Bigr|^2
\Bigr)
g_{\kappa} \Bigl( 
M_N \Bigl(x,   TC^{-1}
\begin{bmatrix}
V\\
W
\end{bmatrix}    \Bigr) 
B_{\cH} (x) 
\Delta^{\cH}_{\ve}
V 
\Bigr)
\Bigr]
\nn\\
&=
\int_{ {\mathbb R} \langle \cH \rangle  } dv
 \int_{{\mathbb R} \langle \cH \rangle^{\perp} } dw
\Bigl\langle
G, \delta_{
(v,w)
} 
\Bigl( 
\begin{bmatrix}
V\\
W
\end{bmatrix}
 \Bigr)  
\Bigr\rangle
\psi_N  \Bigl( 
\Bigl|
TC^{-1}
\begin{bmatrix}
v\\
w
\end{bmatrix}  
\Bigr|^2
\Bigr)
\nn\\
&
\qquad \qquad \times
g_{\kappa} \Bigl( 
M_N \Bigl(x,   TC^{-1}
\begin{bmatrix}
v\\
w
\end{bmatrix}    \Bigr) 
B_{\cH} (x) 
\Delta^{\cH}_{\ve}
v
\Bigr).
\nn
\end{align}
(Since it is difficult to put a column vector as a subscript of $\delta$,
we wrote $\delta_{(v,w)}$. )
We change variables by $v \mapsto (\Delta^{\cH}_{\ve} )^{-1} \kappa v$
and use Lemma \ref{lm.veLA} (3).
Then,
\begin{align}
\lefteqn{
{\mathbb E} [G\chi^{\ve}_N\cdot g_{\kappa} ( F_N (x, \Delta^N_{\ve} U^N_1) ) ]
}
\nn\\
&=
\ve^{-\nu}
\int_{ {\mathbb R} \langle \cH \rangle  } dv
 \int_{{\mathbb R} \langle \cH \rangle^{\perp} } dw
\Bigl\langle
G, \delta_{
(\Delta^{\cH}_{\ve} \kappa v,w)
} 
\Bigl( 
\begin{bmatrix}
V\\
W
\end{bmatrix}
 \Bigr)  
\Bigr\rangle
\nn
\\
& \qquad \times
\psi_N  \Bigl( 
\Bigl|
TC^{-1}
\begin{bmatrix}
\Delta^{\cH}_{\ve} \kappa v\\
w
\end{bmatrix}  
\Bigr|^2
\Bigr)
g \Bigl( 
M_N \Bigl(x,   TC^{-1}
\begin{bmatrix}
\Delta^{\cH}_{\ve} \kappa v\\
w
\end{bmatrix}    \Bigr) 
B_{\cH} (x) 
v
\Bigr).
\nn
\end{align}

Now, we use the dominated convergence theorem 
for $dv dw$-integration
as $\kappa \searrow 0$.
Due to (\ref{ineq.detM}), we can find a large constant $R >0$ 
independent of $\kappa$
such that the integrand above is dominated by 
$R \cdot {\bf 1}_{ \{ |v| <R,  |w| <R\} }$.
($R$ may depend on other parameters.)
Letting $\kappa \searrow 0$, we have
\begin{align}
{\mathbb E} [G\chi^{\ve}_N\cdot \delta_{0} ( F_N (x, \Delta^N_{\ve} U^N_1) ) ]
&=
\ve^{-\nu}
\int_{ {\mathbb R} \langle \cH \rangle  } dv
 \int_{{\mathbb R} \langle \cH \rangle^{\perp} } dw
\Bigl\langle
G, \delta_{
(0,w)
} 
\Bigl( 
\begin{bmatrix}
V\\
W
\end{bmatrix}
 \Bigr)  
\Bigr\rangle
\nn
\\
& \qquad \cdot
\psi_N  \Bigl( 
\Bigl|
TC^{-1}
\begin{bmatrix}
0\\
w
\end{bmatrix}  
\Bigr|^2
\Bigr)
g \Bigl( 
M_N \Bigl(x,   TC^{-1}
\begin{bmatrix}
0\\
w
\end{bmatrix}    \Bigr) 
B_{\cH} (x) 
v
\Bigr).
\nn
\end{align}

Changing variables again by 
$
v \mapsto 
\{ M_N \Bigl(x,   TC^{-1}
\begin{bmatrix}
0\\
w
\end{bmatrix}    \Bigr) 
B_{\cH} (x) 
\}^{-1}
v
$,
we have 
\begin{align}
{\mathbb E} [G\chi^{\ve}_N\cdot \delta_{0} ( F_N (x, \Delta^N_{\ve} U^N_1) ) ]
&=
\ve^{-\nu} |\det B_{\cH}(x) |^{-1}
\nn\\
&
\quad \times
\Bigl\langle
G,
 \frac{ \chi^{\ve}_N}
{  \det M_N (x, TU_1^N)}
 \int_{{\mathbb R} \langle \cH \rangle^{\perp} } dw
  \delta_{
(0,w)
} 
\Bigl( 
\begin{bmatrix}
V\\
W
\end{bmatrix}
\Bigr)
\Bigr\rangle.
 \nn
\end{align}
It is easy to see from Lemma \ref{lm.nondegG} that
\[
 \int_{{\mathbb R} \langle \cH \rangle^{\perp} } dw
  \delta_{
(0,w)
} 
\Bigl( 
\begin{bmatrix}
V\\
W
\end{bmatrix}
\Bigr)
=
\delta_0 (V).
\]
This completes the proof.
\end{proof}

\vspace{3mm}

Now we are in a position to
prove our main result in this section.
\begin{proof} [Proof of Theorem~\ref{tm.R^d}]
We expand the (generalized)
Wiener functionals on the right-hand side of (\ref{eq.key}).
First, note that 
\[
\Delta^N_{\ve} U^N_1 = \sum_{I \in \cG (N)} \ve^{|I|} U_1^I.
\]
This is just a polynomial in $\ve$ 
whose coefficients belong to an inhomogeneous Wiener chaos.

By the choice of $\psi$ and a routine argument, 
we have 
\begin{equation}\label{eq.asym1}
\chi^{\ve}_N  =
\psi ( |\Delta^N_{\ve} U^N_1 |^2 / (\kappa_N/2)^2 )
=
1 + O(\ve^{\infty})
\qquad
\mbox{in ${\mathbb D}_{\infty}$}
\end{equation}
as $\ve \searrow 0$.
Therefore, this is actually a dummy factor 
introduced for technical purposes
and makes no contribution to the asymptotic expansion.
Obviously, $x$ is not involved in this functional.

By the definition in (\ref{def.gntn1}) and (a comment after that),
$\tilde{\Gamma}_N^{\ve} (x) U_1^N$
is also a polynomial in $\ve$ that takes values in 
$\cL ({\frak g}^N ({\mathbb R}^d),  {\mathbb R} \la \cH\ra)$
whose coefficients belong to an inhomogeneous Wiener chaos.
(Moreover, it depends smoothly in $x$).
Since this is uniformly non-degenerate 
(see Lemma \ref{lm.nondegG}), 
we can use the standard version 
of Watanabe's theory (\ref{asymp.WAT}) to obtain the following 
asymptotic expansion:
\begin{equation}\label{eq.asym2}
\delta_0 ( \tilde{\Gamma}_N^{\ve} (x) U_1^N)
=
\cY^N_0 (x)+ \ve \cY^N_1(x)+  \ve^2 \cY^N_2 (x)+ \cdots
\qquad
\mbox{in $\tilde{\mathbb D}_{-\infty}$}
\end{equation}
as $\ve \searrow 0$.
Since Lemma \ref{lm.nondegG} claims 
uniform dependency in $x$, this expansion is uniform in $x \in O_N$.
By Lemma \ref{lm.veLA} (1) and a comment after
(\ref{def.gntn2}),
$\cY^N_0 (x)= \delta_0 (\tilde{\Gamma}_N^{0} (x) U_1^N ) 
=\delta_0 (\tilde{\Gamma}_{N_0}^{0} (x) U_1^{N_0} )$.

By the explicit definition of $M$ in (\ref{def.M}) and 
the uniform lower bound of $\det M$ in (\ref{ineq.detM}),
we also obtain the following 
asymptotic expansion uniformly in $x \in O_N$:
\begin{equation}\label{eq.asym3}
\det M_N (x, \Delta_{\ve}^N U_1^N)^{-1}
=
1 + \ve \cZ^N_1 (x)+  \ve^2 \cZ^N_2 (x)+ \cdots
\qquad
\mbox{in ${\mathbb D}_{\infty}$}
\end{equation}
as $\ve \searrow 0$.

Take $L>0$ arbitrarily large. 
We will show that $\delta_x (X^{\ve}(1,x))$ admits
an asymptotic expansion up to order $L$ as $\ve\searrow 0$.
For this $L$, we choose $N \ge N_0$ 
so that $l_N \ge L+\nu+1$.
Here, $\{l_N\}$ is the diverging sequence given in Lemma \ref{lm.Xaprx}.
We also take $O_N$ small enough so that all the previous 
results are available.

From
Lemma \ref{lm.Xaprx}, Lemma \ref{lm.hnki} and 
(\ref{eq.asym1})--(\ref{eq.asym3}),
we obtain the following 
asymptotics in $\tilde{\mathbb D}_{- \infty}$ 
as $\ve\searrow 0$ uniformly in $x \in O_N$ :
\begin{align}
\label{eq.asymp4}
\delta_x (X^{\ve}(1,x))
&=
 |\det B_{\cH}(x) |^{-1}
 \ve^{-\nu}
\\
&
\times  \bigl\{
 \delta_0 (\tilde{\Gamma}_{N_0}^{0} (x) U_1^{N_0} )
+ \ve  \Theta_1^N (x)
+\cdots + \ve^{ L+\nu}  \Theta_{  L+\nu}^N (x)
\bigr\}
+O (\ve^{L+1})
\nonumber
\end{align}
for some $\Theta_j^N (x) \in \tilde{\mathbb D}_{- \infty}
~(1 \le j \le L+\nu)$.
Since the coefficients of an asymptotic expansion 
are uniquely determined,
$\Theta_j^N (x)$ is actually independent of the choice of $N$.
This proves (\ref{asy.dX}).

By a routine argument, 
$\Theta_j^N (x; -w) = -\Theta_j^N (x; w)$ 
as a generalized Wiener functional if $j$ is odd.
This implies ${\mathbb E} [\Theta_j^N (x)]=0$ if $j$ is odd.

Finally, we show ${\mathbb E} [\delta_0 (\tilde{\Gamma}_{N_0}^{0} (x) U_1^{N_0} )] >0$.
Recall that $\tilde{\Gamma}_{N_0}^{0} (x) 
= [{\rm Id}_{{\mathbb R} \la \cH\ra} |\, *\,]$ 
is  a (possibly non-orthogonal) projection from 
${\frak g}^{N_0} ({\mathbb R}^d)$ to ${\mathbb R} \la \cH\ra$.
If we denote by $q^{N_0}$
 the smooth density of the law of $U_1^{N_0}$ on ${\frak g}^{N_0} ({\mathbb R}^d)$,
 then 
 \[
 {\mathbb E} [\delta_0 (\tilde{\Gamma}_{N_0}^{0} (x) U_1^{N_0} )]
 =
 K_x \int_{ \ker \tilde{\Gamma}_{N_0}^{0} (x)} q^{N_0} (u)du,
 \]
where $du$ is the Lebesgue  measure on the subspace 
$\ker \tilde{\Gamma}_{N_0}^{0} (x)$
and $K_x >0$ is a constant
which depends on  ``the angle" of the kernel and the image 
of the projection $ \tilde{\Gamma}_{N_0}^{0} (x)$. 
(If the projection is orthogonal, then $K_x =1$.)

Since the everywhere positivity of $q^{N_0}$ is shown 
in \cite[p. 202]{tak}
or originally in Kunita \cite{ku},
we have ${\mathbb E} [\delta_0 (\tilde{\Gamma}_{N_0}^{0} (x) U_1^{N_0} )] >0$
and  the proof of Theorem \ref{tm.R^d} is done.
\qedhere
 \end{proof}

\begin{remark}
There is another way to prove that $q^{N}$
is everywhere positive.
Let $u^N (h)$ be the solution of the skeleton ODE 
which corresponds to SDE (\ref{eq.sdeg^N}) driven by $h \in H$.
In other words, $u^N (h) =\log y^N (h)$.
It is sufficient to show that, for every $u \in {\frak g}^{N} ({\mathbb R}^d)$,
there exists $h \in H$ such that 
$u^N_1 (h)  =u$ and the tangent map
$D u^N_1 (h) \colon H \to  {\frak g}^{N} ({\mathbb R}^d)$
is surjective.
(See Aida-Kusuoka-Stroock \cite{aks} for example. See also Remark \ref{re.pol}.)

Such an $h$ can be found as follows. 
Take any Cameron-Martin path $k \colon [0,1/2] \to {\mathbb R}^n$
such that
$D u^N_{1/2} (k)$ is surjective (it does exist).
Since $G^N ({\mathbb R}^n) = \{ y^N_{1/2} (h) \mid h \in H\}$,
there exists 
a Cameron-Martin path $\hat{k} \colon [0,1/2] \to {\mathbb R}^n$
such that
$u^N_{1/2} (\hat{k}) = u^N_{1/2} (k)^{-1} \times u$.
Then, the concatenated path $k * \hat{k} \in H$ is the desired path.
Here, $k * \hat{k}$ is defined to be $k$ on $[0,1/2]$
and $\hat{k} ( \,\cdot\, -1/2) +k(1/2)$ on $[1/2,1]$.
Here, we used the left-invariance with respect to the product $\times$.
\end{remark}

 As a corollary  of Theorem \ref{tm.R^d}, 
we consider an SDE with drift instead of the driftless SDE (\ref{def.sde})
and 
prove an asymptotic expansion of the associated heat kernel
under an assumption that the drift vector field $V_0$ 
can be written as a linear combination of $V_1, \ldots, V_n$. 
It is important that the leading positive constant in the expansion
is independent of such $V_0$.

For $V_0, V_1, \ldots, V_n \in {\frak X}  ({\mathbb R}^d)$
and $0 < \ve \le 1$, we consider 
the following SDE on ${\mathbb R}^d$:
\begin{equation}\label{def.sde+dr}
d\hat{X}^{\ve} (t, x) 
= \ve \sum_{i=1}^n V_i (\hat{X}^{\ve} (t, x) ) \circ dw^i_t
+
\ve^2  V_0 (\hat{X}^{\ve} (t, x) ) dt
\qquad
\mbox{with $\hat{X}^{\ve} (0, x)=x$.}
\end{equation}
We continue to assume that  $V_i^j := \la dx^j, V_i\ra$ 
has bounded derivatives of all order $\ge 1$  ($0 \le i \le n, 1 \le j \le d$).
By the scaling property, 
$(\hat{X}^{\ve} (t, x))_{t \ge 0}$ and $(\hat{X}(\ve^2 t, x))_{t \ge 0}$
have the same law.
Here, we simply write $\hat{X} (t, x)$ for $\hat{X}^{\ve} (t, x)$ when $\ve =1$. 
When it exists, we denote by 
$p_t (x, x')$ 
the heat kernel associated with $\hat{X} (t, x)$,
which is
the density of the law of $\hat{X} (t, x)$ with respect to the Lebesgue measure.

\begin{corollary}\label{cor.Gir}
Let the notations be as above. 
Suppose  ${\bf (ER)}_{x_0}$ for $\{V_1, \ldots, V_n\}$
at $x_0 \in  {\mathbb R}^d$.
Suppose also that there exist 
smooth, bounded functions $a_1, \ldots, a_n \colon  {\mathbb R}^d \to  {\mathbb R}$ 
with bounded derivatives of all order 
which satisfy
 that $V_0 (x) = \sum_{i=1}^n a_i (x) V_i (x)$ for every $x \in {\mathbb R}^d$.

Then,  
there exists a decreasing sequence $\{ O_j \}_{j \ge 0}$ of
neighborhoods of $x_0$ 
such that the asymptotic expansion 
\[
p_t (x, x) 
\sim
t^{-\nu/ 2} \bigl( c_0 (x)+   c_1 (x)t
+  c_2 (x) t^2+\cdots
\bigr)
\qquad
\mbox{as $t \searrow 0$}
\]
holds for every $x \in  O_0$ with the following properties:
 {\rm (i)}~ $\inf_{x \in  O_0} c_0 (x)  >0$,
 {\rm (ii)}~
 for every $j \ge 0$, 
\[
\sup_{x \in  O_j} \Bigl\{  |c_j (x) | 
+\sup_{0 < t \le1} 
t^{( \nu/2) -j-1}
 \Bigl|
 p_t (x, x) 
-
t^{-\nu/ 2} \bigl( c_0 (x)
+\cdots + c_j (x)t^j
\bigr)
 \Bigr|
   \Bigr\}<\infty.
\]
Moreover, $c_0 (x) ={\mathbb E} [\Theta_{0} (x)]$ 
and hence is independent of $\{a_1, \ldots, a_n\}$.
\end{corollary}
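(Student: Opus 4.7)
The plan is to reduce the SDE with drift to the driftless case (\ref{def.sde}) via Girsanov's theorem, apply Theorem~\ref{tm.R^d} with $\ve = \sqrt{t}$, and kill the half-integer powers of $t$ by a parity refinement of Theorem~\ref{tm.R^d}.

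Since $V_0 = \sum_i a_i V_i$, set
\[
M^{(\ve)} = \exp\Bigl( \ve \sum_i \int_0^1 a_i(X^{\ve}(s,x))\, dw^i_s - \frac{\ve^2}{2} \sum_i \int_0^1 a_i(X^{\ve}(s,x))^2\, ds \Bigr).
\]
Under $d{\mathbb Q} = M^{(\ve)}\, d{\mathbb P}$, Girsanov's theorem makes $\tilde w^i_t = w^i_t - \ve \int_0^t a_i(X^{\ve}(s,x))\,ds$ a Brownian motion, and the identity $\ve V_i \circ dw^i = \ve V_i \circ d\tilde w^i + \ve^2 V_0\,ds$ shows that the law of $X^{\ve}(1,x)$ under ${\mathbb Q}$ coincides with the law of $\hat X^{\ve}(1,x)$ under ${\mathbb P}$. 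Hence for every $\phi \in \cS({\mathbb R}^d)$, ${\mathbb E}[\phi(\hat X^{\ve}(1,x))] = {\mathbb E}[\phi(X^{\ve}(1,x)) M^{(\ve)}]$, and both sides are continuous in $\phi$ (the right-hand side by Watanabe pullback to the non-degenerate $X^{\ve}(1,x)$ of Proposition~\ref{pr.KS} multiplied by the fixed $M^{(\ve)} \in {\mathbb D}_\infty$), so the identity extends to $\phi = \delta_x$ and gives $p_t(x,x) = {\mathbb E}[\delta_x(X^{\sqrt t}(1,x)) \cdot M^{(\sqrt t)}]$.

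Next, Proposition~\ref{pr.ste} and boundedness of the $a_i$ supply a uniform-in-$x$ expansion $M^{(\ve)} \sim 1 + \ve\,\mu_1(x) + \ve^2 \mu_2(x) + \cdots$ in ${\mathbb D}_\infty$. Multiplying by the $\tilde{{\mathbb D}}_{-\infty}$-expansion of $\delta_x(X^{\ve}(1,x))$ from Theorem~\ref{tm.R^d} (using the pairing estimate (8.47) of \cite{iwbk} as in the proof of Lemma~\ref{lm.Xloc} to propagate uniform Sobolev control) and taking the generalized expectation yields
\[
p_t(x,x) \sim \ve^{-\nu} \sum_{k \ge 0} \ve^k d_k(x), \qquad d_k(x) = \sum_{i+j=k} {\mathbb E}[\Theta_i(x)\,\mu_j(x)],
\]
with $\ve = \sqrt t$, uniformly for $x$ in a neighbourhood of $x_0$. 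In particular $d_0(x) = {\mathbb E}[\Theta_0(x)]$ is independent of $\{a_i\}$ and positive by Theorem~\ref{tm.R^d}\,(i).

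Finally, I claim $d_{2j-1}(x) = 0$ for every $j \ge 1$. Using the explicit identity of Lemma~\ref{lm.hnki} and the fact that $\Delta^N_\ve U^N_1(-w) = \Delta^N_{-\ve} U^N_1(w)$ (which follows since the $I$-component of $U^N_1$ has $w$-parity $(-1)^{|I|}$), one checks that the functional $\Phi(\ve)(w) := \chi^{\ve}_N \cdot \det M_N(x, \Delta^N_\ve U^N_1)^{-1} \delta_0(\tilde{\Gamma}^{\ve}_N U^N_1)$ satisfies $\Phi(\ve)(-w) = \Phi(-\ve)(w)$; comparing its two $\ve$-expansions (valid for $\ve$ on both sides of $0$, since $\Phi$ is polynomial in $\ve$ once $\chi^{\ve}_N$, which depends only on $\ve^2$, is fixed) gives $\Theta_k(x;-w) = (-1)^k \Theta_k(x;w)$ for every $k \ge 0$, a refinement of the odd-$k$ parity proved in Theorem~\ref{tm.R^d}. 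An analogous computation using $X^{\ve}(s,x;-w) = X^{-\ve}(s,x;w)$ shows $M^{(\ve)}(-w) = M^{(-\ve)}(w)$, hence $\mu_j(x;-w) = (-1)^j \mu_j(x;w)$. Thus $\Theta_i\,\mu_j$ is $w$-odd whenever $i+j$ is odd, so ${\mathbb E}[\Theta_i\,\mu_j] = 0$ and $d_{2j-1}(x) = 0$; setting $c_j(x) := d_{2j}(x)$ and $t = \ve^2$ gives the claimed expansion. The hardest part will be the careful book-keeping to propagate uniform-in-$x$ Sobolev estimates through Girsanov and through the product of a ${\mathbb D}_\infty$-expansion with a $\tilde{{\mathbb D}}_{-\infty}$-expansion; the parity refinement itself is routine once the explicit formula of Lemma~\ref{lm.hnki} is in hand.
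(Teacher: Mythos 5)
Your proposal is correct and follows essentially the same route as the paper: Girsanov's theorem to reduce to the driftless SDE, Theorem \ref{tm.R^d} for the expansion of $\delta_x(X^{\ve}(1,x))$, a uniform ${\mathbb D}_\infty$-expansion of the exponential martingale, and a parity argument to kill the half-integer powers of $t$. Your explicit verification of the full parity $\Theta_k(x;-w)=(-1)^k\Theta_k(x;w)$ is a welcome elaboration of what the paper dismisses as ``a routine argument,'' but it is not a different method.
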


\begin{proof}
We prove  the corollary by combining  the driftless case
(Theorem \ref{tm.R^d}) and Girsanov's theorem.
Set 
\[
M_t^{\ve, x} = \exp \Bigl(  
\ve \sum_{i=1}^n  \int_0^t  a_i ( X^{\ve}(s,x))  dw_s^i
-
\frac{\ve^2}{2} 
\sum_{i=1}^n  \int_0^t  |a_i ( X^{\ve}(s,x)) |^2  ds
\Bigr).
\]
Since $a_i~(1 \le i \le n)$ are bounded, $t \mapsto M_t^{\ve, x}$
is a true martingale.
By the scaling property of Brownian motion and
Girsanov's theorem, we have
\begin{eqnarray*}
 p_{\ve^2} (x, x) 
 &=&
 {\mathbb E} [ \delta_x  (\hat{X} (\ve^2 ,x) ) ] 
 =
 {\mathbb E} [ \delta_x  (  \hat{X}^{\ve}(1 ,x)  ) ] 
 =
 {\mathbb E} [ M_1^{\ve, x} \delta_x  (  X^{\ve}(1 ,x)  ) ].
   \end{eqnarray*}
Since we have already seen in Theorem \ref{tm.R^d} that 
$ \delta_x  (  X^{\ve}(1 ,x)  )$ admits an asymptotic expansion
 in $\tilde{\mathbb D}_{- \infty}$,
 it is sufficient to show that $M_1^{\ve, x}$ 
 admits an asymptotic expansion in ${\mathbb D}_{\infty}$ uniformly 
 in $x \in  O_0$.

By Proposition \ref{pr.ste}, $X^{\ve}(s ,x)$ admits an 
asymptotic expansion in ${\mathbb D}_{\infty}$
uniformly in $s \in [0,1]$ and $x \in  O_0$.
Moreover, each term in the expansion is measurable 
with respect to $\sigma ( w_u\mid 0 \le u \le s)$.
Therefore, 
$\sum_{i=1}^n  \int_0^1  a_i ( X^{\ve}(s,x))  dw_s^i$
admits an 
asymptotic expansion in ${\mathbb D}_{\infty}$
uniformly in $x \in  O_0$
and so does $\sum_{i=1}^n  \int_0^t  |a_i ( X^{\ve}(s,x)) |^2  ds$.
Since $a_i~(1 \le i \le n)$ and their derivatives are all bounded, 
we can easily see that
\begin{equation}\label{asy.Gir}
M_1^{\ve, x}
\sim 
1 + \ve \Xi_1 (x)  + \ve^2 \Xi_2 (x)  + \cdots
\qquad
\mbox{in ${\mathbb D}_{\infty}$ as $\ve\searrow 0$ 
 uniformly in $x \in  O_0$.}
\end{equation}
Moreover, $\Xi_{2j-1}(x; \,\cdot\, )$ is odd as a Wiener functional
for every $j \ge 1$ and $x \in  O_0$,
that is,  $\Xi_{2j-1} (x; -w) = - \Xi_{2j-1} (x; w)$.

By multiplying 
(\ref{asy.dX}) and (\ref{asy.Gir}) and taking  the generalized expectation,
 we have the desired expansion of $p_{\ve^2} (x, x)$.
Note that 
the odd-numbered terms in the expansion of 
$M_1^{\ve, x} \delta_x  (  X^{\ve}(1 ,x)  )$ are also odd
 as generalized 
 Wiener functionals and hence their generalized expectations vanish. 
 Note also that
 since the leading term on the right-hand side of (\ref{asy.Gir}) is $1$,
 $c_0 (x)$ does not depend on $a_i ~(1 \le i \le n)$.

It is a routine to check that $x \mapsto \Xi_j (x)$ is continuous.
By Remark \ref{re.cnt},
we can easily check the continuity of $c_j (x)$ in $x$.
Positivity of $c_0 (x)$ is immediate from Theorem \ref{tm.R^d}.
  \end{proof}

 \begin{remark}\label{re.hikaku}
Sections 3--5 basically follows its counterpart in \cite{tak}.
However, we believe that 
our argument here is simpler and more readable 
for the following reasons.
(i) Fortunately, it suffices to consider a driftless SDE \eqref{def.sde}
for our purpose.
Hence, we need not use the ``anisotropic dilation" 
on the tensor algebra.
This simplifies our notations much.
(ii) In \cite{tak} (originally in Yamato \cite{ya}) 
proofs of important properties of the free nilpotent groups/algebras
are done via
computations in the coordinates 
with respect to a linear basis $\cG (N)$.
This could be compared to doing all the differential 
geometric computation on a manifold via  local coordinates
and therefore
 does not provide a very clear view of what is going on.
In recent developments of rough path theory 
and numerical study of SDEs, 
study of the free nilpotent groups/algebras advanced much
(cf. e.g. \cite[Chapter 7]{fv} for a summary).
It provides us a clear view of these objects
and helps us simplify our argument.
In particular, 
proofs via the flow of ODEs on 
the nilpotent Lie groups/algebra in \cite{tak}
are replaced by (linear or Lie) algebraic proofs.
(iii) Some non-trivial facts on Malliavin calculus 
are presented without proofs in \cite{tak}.
We added proofs and explanations for non-experts.
 \end{remark}

\section{On sub-Riemannian manifolds}

Let $(M,\cD,g)$ be a sub-Riemannian manifold as in
Section~1; hence $d,n,\nu$ and $N_0$ are all as described there.
In this section we prove  the uniform asymptotic
expansion of the heat kernel on $M$ via localization method.
We emphasize that our arugument is  almost purely probabilistic.
Two key tools to achieve this goal are 
the stochastic parallel transport for the $\Delta /2$-diffusion process 
and 
Malliavin calculus for manifold-valued SDEs.
The stochastic parallel transport,
or the Eells-Elworthy-Malliavin method
of constructing   diffusion processes on a general sub-Riemannian 
manifolds was done in \cite{gt, thal}.
Methods in  these papers 
are slightly different and the latter is used in this paper.
Malliavin calculus for manifold-valued SDEs
was done in \cite{tan}.
It was shown there that a solution to an SDE at a fixed time
is non-degenerate in the sense of Malliavin
 under the {\it partial} H\"ormander condition on the coefficient vector fields
 of the SDE.

We shall define a ``div-grad type'' sub-Laplacian.
The horizontal gradient of $f\in C^\infty(M)$ is defined to
be the unique section  
$\nabla^{\cD} f\in C^\infty(M;\cD)$ such that  
\[
    g(\nabla^{\cD} f,A)=Af,\quad A\in C^\infty(M;\cD).
\]
Let $A_1,\dots,A_k$ be a local orthonormal frame for $\cD$, 
i.e., a family of local sections 
$A_1,\dots,A_n\in C^\infty(U;\cD)$ over an open set
$U\subset M$ with $g_x((A_i)_x,(A_j)_x)=\delta_{ij}$ for 
$x\in U$ and $1\le i,j\le n$. 
Then
\begin{equation}
    \nabla^{\cD} f=\sum_{i=1}^n (A_i f)A_i.
\end{equation}

Take a smooth measure $\mu$ on $M$.
For $A\in C^\infty(M;TM)$, define its $\mu$-divergence  
$\text{\rm div}_\mu A$ by 
\begin{equation}\label{eq.divergence}
    \int_M f (\text{\rm div}_\mu A) d\mu
    =-\int_M Af\,d\mu, \quad f\in C_0^\infty(M).
\end{equation}
The sub-Laplacian associated with a positive volume form $\mu$
is the second order differential operator given by
\[
    \Delta f
    =\text{\rm div}_\mu(\nabla^{\cD} f),
    \quad f\in C^\infty(M).
\]
In terms of a local orthonormal frame $A_1,\dots,A_n$
for $\cD$,
\begin{equation}\label{eq.delta.local}
    \Delta=\sum_{i=1}^n \Bigl(A_i^2
        +\text{\rm div}_\mu(A_i)A_i\Bigr).
\end{equation}
In what follows, for the sake of simplicity, we assume that
$M$ is compact.

The goal of this section is to show

\begin{theorem}\label{tm.subrie}
Let $(M,\cD,g)$ and $\mu$ be as above.
Then, the following hold.
\\
(i) There exists a diffusion process generated by
$\Delta/2$ and it possesses a transition
density function $p_t(x,y)$, which is smooth in 
$(t,x,y)\in(0,\infty)\times M\times M$, with respect to
$\mu$.
\\
(ii) Suppose $M$ is equiregular.
Then, the asymptotic expansion
\[
p_t (x, x) 
\sim
t^{-\nu/ 2} \bigl( c_0 (x)+   c_1 (x)t
+  c_2 (x) t^2+\cdots
\bigr)
\qquad
\mbox{as $t \searrow 0$}
\]
holds for every $x \in M$
with the following properties:
{\rm (a)}~ 
$\inf_{x \in M} c_0 (x)  >0$,
{\rm (b)}~for every $j \ge 0$, 
\[
\sup_{x \in M} \Bigl\{  |c_j (x) | 
+\sup_{0 < t \le1} 
t^{( \nu/2) -j-1}
 \Bigl|
 p_t (x, x) 
-
t^{-\nu/ 2} \bigl( c_0 (x)
+\cdots + t^j  c_j (x)
\bigr)
 \Bigr|
   \Bigr\}<\infty.
\]
\end{theorem}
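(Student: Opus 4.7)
The plan is to construct the $\triangle/2$-diffusion on $M$ by the Eells--Elworthy--Malliavin method (adapted to sub-Riemannian geometry in \cite{gt,thal}) and then reduce the short-time asymptotics of $p_t(x,x)$ to the Euclidean statement of Corollary~\ref{cor.Gir}.

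For Part~(i), I would lift Brownian motion on $\mathbb{R}^n$ horizontally to the bundle of orthonormal frames of $\cD$, obtaining by projection a strong solution $X_t^x$ on $M$ starting at $x$. In any local orthonormal frame $A_1,\dots,A_n$ for $\cD$ over an open set $U\subset M$, the process satisfies
\[
dX_t = \sum_{i=1}^n A_i(X_t)\circ dw_t^i + \tfrac12\sum_{i=1}^n \text{\rm div}_\mu(A_i)(X_t)\cdot A_i(X_t)\,dt,
\]
which is consistent with \eqref{eq.delta.local}. Since the H\"ormander condition holds at every $x\in M$, manifold-valued Malliavin calculus \cite{tan} yields non-degeneracy of $X_t^x$ in the Malliavin sense for every $t>0$; Watanabe's pullback then produces a transition density $p_t(x,y)$ with respect to $\mu$ that is smooth in $(t,x,y)\in(0,\infty)\times M\times M$. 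Smoothness in $(x,y)$ at fixed $t$ comes from smooth dependence of $X_t^x$ on the starting point together with integration by parts in the sense of Malliavin, and smoothness in $t$ follows from hypoellipticity of $\partial_t - \triangle/2$, again a consequence of the H\"ormander condition.

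For Part~(ii), fix $x_0\in M$, choose a chart $(U,\varphi)$ centered at $x_0$, and select a local orthonormal frame $A_1,\dots,A_n$ of $\cD$ on $U$. Pushing each $A_i$ through $\varphi$, truncating outside a small neighborhood of $\varphi(x_0)$, and extending smoothly so that $V_i^j = \langle dx^j,V_i\rangle$ has bounded derivatives of all orders $\ge 1$, the localized SDE on $\mathbb{R}^d$ is exactly \eqref{def.sde+dr} with $\ve=1$. The drift $V_0 = \tfrac12\sum_i \text{\rm div}_\mu(A_i)\,V_i$ is by construction a bounded smooth linear combination $\sum_i a_i V_i$ with bounded derivatives of all orders, precisely the class covered by Corollary~\ref{cor.Gir}. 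Equiregularity of $(M,\cD,g)$ translates into ${\bf (ER)}_{\varphi(x_0)}$ for $\{V_1,\dots,V_n\}$ with the same Hausdorff exponent $\nu$, so Corollary~\ref{cor.Gir} furnishes an asymptotic expansion of the localized heat kernel $\tilde p_t(\varphi(x),\varphi(x))$ to arbitrary order, uniformly on a neighborhood of $\varphi(x_0)$, with a positive leading constant $c_0$ that is independent of the drift and hence canonically determined by the sub-Riemannian data and $\mu$.

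The final step is a heat-kernel localization that matches $\tilde p_t$ with $p_t$. Pulling $\mu$ back to the chart contributes a smooth positive density factor $\rho$ independent of $t$, which only modifies the expansion coefficients. The true diffusion and its truncated Euclidean counterpart agree up to the exit time from a neighborhood of $x_0$, and the difference of their on-diagonal heat kernels is bounded by $C\exp(-c/t^\beta)$ for some $\beta>0$; this is the sub-Riemannian analogue of the Watanabe-distribution cutoff argument already executed in Lemmas~\ref{lm.Xloc} and~\ref{lm.Xaprx} (whose essential input is the nilpotent exit estimate \eqref{ineq.exitU} combined with the Taylor comparison of Proposition~\ref{pr.ste}). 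Such subexponential errors are absorbed into any polynomial remainder in $t$. Covering the compact $M$ by finitely many such charts and invoking continuity of each $c_j$ in the base point (Remark~\ref{re.cnt}) upgrades the local uniform expansion to the global one, while positivity $\inf_{x\in M} c_0(x) > 0$ follows from the local lower bound in Corollary~\ref{cor.Gir} together with compactness. The main obstacle I anticipate is this last localization step: although standard in spirit, making the cutoff and exit estimates rigorous uniformly in the base point, at the level of Watanabe distributions on a manifold and with the intrinsic distance replaced by chart distance, requires careful bookkeeping; a secondary technical point is that the frame-bundle lift has coefficients that are not of polynomial growth in canonical coordinates, so the reduction to Euclidean form must be done only after localization.
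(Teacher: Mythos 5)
Your proposal is correct and follows essentially the same route as the paper: the Eells--Elworthy--Malliavin construction on the orthonormal frame bundle of $\cD$ with non-degeneracy from \cite{tan} for part (i), and localization to a chart followed by an application of Corollary~\ref{cor.Gir} (with the drift absorbed as $\sum_i a_iV_i$, $a_i=\tfrac12\,\text{\rm div}_\mu A_i$) for part (ii). The only point where the paper differs in detail is the comparison $\sup_{x,y}|\rho(y)p_t(x,y)-\tilde p_t(x,y)|\le c_1e^{-c_2/t}$, which it obtains by the classical ``not feeling the boundary'' maximum-principle argument combined with off-diagonal decay \eqref{eq.hc} (itself proved from the exit estimate \eqref{ineq.exitU} and Kusuoka--Stroock's bound), rather than by a direct manifold analogue of the Watanabe-distribution cutoff of Lemmas~\ref{lm.Xloc}--\ref{lm.Xaprx}; the ingredients you name are the right ones.
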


We shall show the theorem by constructing the diffusion
process via the Eells-Elworthy-Malliavin method modified for
sub-Riemannian manifolds, and then applying Corollary
\ref{cor.Gir}.
It should be noted that the method
gives us strong solutions to stochastic differential
equations, which enable us to treat systematically the
assertions in the theorem together.
In fact, to construct diffusion process, a weak solution is 
enough; since $\Delta/2$ is smooth, the associated
martingale problem is well-posed, and hence the diffusion
process exists.
In this case, by \eqref{eq.delta.local} and H\"ormander's
theorem, one can prove the assertion
(i) in the theorem, but proving the 
short time asymptotics is another matter.  

Let
\[
    O(\cD)_x=\bigl\{
      u:\R^n\to\cD_x \big|
      \text{$u$ is a bijective linear isometry}\bigr\}
    \quad\text{and}\quad
    O(\cD)
    =\bigsqcup_{x\in M} O(\cD)_x
\]
and define $\pi \colon O(\cD)\to M$ by $\pi(u)=x$ for 
$u\in O(\cD)_x$, $x\in M$.
Then, 
$\pi \colon O(\cD)\to M$ is an $O(n)$-principal bundle, where
$O(n)$ is the space of $n\times n$ orthogonal matrices. 
To apply the Eells-Elworthy-Malliavin method to 
a sub-Riemannian manifold, we first recall 
the horizontal
vector fields on $O(\cD)$ (cf. \cite{thal}).
To do this,  let $\nabla$ be a partial metric connection on
$(M,\cD,g)$; that is, $\nabla$ is a bilinear mapping
\[
    \nabla \colon C^\infty(M;\cD)\times C^\infty(M;\cD)
    \ni (A,B)\mapsto \nabla_AB\in
    C^\infty(M;\cD)
\]
such that $\nabla_A(fB)=f\nabla_AB+(Af)B$ for 
$f\in C^\infty(M)$ and $\nabla g=0$, where 
$(\nabla_A g)(B,C):=\nabla_A g(B,C)-g(\nabla_A B,C)
     -g(B,\nabla_A C)$ for $A,B,C\in C^\infty(M;\cD)$. 
A typical example of partial metric connections is a
restriction of Levi-Civita connection.
In fact, 
let $\tilde{g}$ be a Riemannian metric tensor on $M$
and $\tilde{\nabla}$ be its Levi-Civita connection.
If $\tilde{g}$ tames $g$,
i.e., $\tilde{g}|_{\cD\times\cD}=g$, then
$\nabla_A B=\text{\rm pr}_{\cD}\tilde{\nabla}_AB$,
$\text{\rm pr}_{\cD}$ being the projection onto $\cD$,
is a partial metric connection.

In terms of a local orthonormal frame $A_1,\dots,A_n$ for
$\cD$, define $\omega_i^j\in C^\infty(M;\cD^*)$,
where $\cD^*$ is the dual subbundle of $\cD$, by 
\[
    \nabla A_i=\sum_{j=1}^n \omega_i^j A_j,
    \quad\text{i.e., }
    \nabla_B A_i=\sum_{j=1}^n \omega_i^j(B) A_j
    \quad\text{for $i=1,\dots,n$ 
               and $B\in C^\infty(M;\cD)$}.
\]
Since $\nabla g=0$, $\omega_j^i=-\omega_i^j$, 
$1\le i,j\le n$.
We now extend the partial connection form
$\omega=(\omega_i^j)$ to a smooth partial $1$-form on
$O(\cD)$ with values in the Lie algebra $\mathfrak{o}(n)$ of
$O(n)$, say $\omega$ again, given by 
\[
    s^{-1}\omega s+s^{-1}ds,
\]
where we have used the local trivialization $M\times O(n)$
of $O(\cD)$ and then $s$ is the coordinate of $O(n)$;
more precisely,
$s^{-1}ds$ is the Maurer-Cartan form $\theta$ given by
$\theta(X)=s^{-1}X$ for $X\in T_sO(n)$ and $s\in O(n)$.

Define the holizontal subspace
$K_u\subset T_uO(\cD)$, $u\in O(\cD)$, by
\[
    K_u=\{A\in T_uO(\cD)\,|\,
         (\pi_*)_uA\in \cD_{\pi(u)},\omega_u(A_u)=0\}.
\]
In terms of a local orthonormal frame $A_1,\dots,A_n$ for
$\cD$, it holds
\[
    K_u=\Bigl\{\sum_{\alpha=1}^n a^\alpha A_\alpha
    -\sum_{p,q,r,s=1}^n \omega^p_{qr} e^q_s a^r
    \frac{\partial}{\partial e^p_s}\,\Big|\,
    (a^1,\dots,a^n)\in \mathbb{R}^n \Bigr\},
\]
where
\[
    \omega^p_{qr}=\omega_q^p(A_r)
    =g(\nabla_{A_r}A_q,A_p), \quad 1\le p,q,r\le n,
\]
and $(e^p_q)_{1\le p,q\le n}$ stands for the matrix
coordinate of $O(n)$.
Then the horizontal lift
$\ell_u:\cD_{\pi(u)}\to K_u$ defined by
\[
    \ell_u\biggl(\sum_{i=1}^n a^i A_i\biggr)
    =\sum_{i=1}^n a^i A_i
    -\sum_{p,q,r,s=1}^n \omega^p_{qr} e^q_s a^r
    \frac{\partial}{\partial e^p_s}
\]
is bijective.

Let $\{{\mathbf e}_i\mid 1\le i\le n\}$
be the canonical basis of $\R^n$.
Define the canonical horizontal vector
fields $V_1,\dots,V_n$ on $O(\cD)$ by 
\[
    (L_i)_u=\ell_u(u{\mathbf e}_i),\quad 1\le i\le n.
\]
In terms of an orthonormal frame $A_1,\dots,A_n$ for $\cD$,
it holds
\begin{equation}\label{eq.hor.v.f.local}
    L_i=\sum_{j=1}^n e_i^j A_j
    -\sum_{p,q,r,s=1}^n \omega^p_{qr} e^q_s e^r_i
    \frac{\partial}{\partial e^p_s},\quad 1\le i\le n.
\end{equation}

The following lemma asserts that the operator
$(1/2)\sum_{i=1}^n L_i^2$ corresponds to another
sub-Laplacian $\Delta^\prime$ given by
\begin{equation}\label{def.lap'}
    \Delta^\prime f=\text{\rm tr}\,\nabla df,
    \quad f\in C^\infty(M),
\end{equation}
where 
the Hessian $\nabla df$ is given by 
\begin{equation}\label{eq.nabla.df}
    (\nabla df)(A,B)=ABf-(\nabla_AB)f,
    \quad A,B\in C^\infty(M;\cD),
\end{equation}
and $\text{\rm tr} \nabla df$ is the trace at each point in
$M$ of the bilinear form  $(A,B)\mapsto (\nabla df)(A,B)$.

\begin{lemma}\label{lem.another.sublap}
For $f\in C^\infty(M)$, set $\widetilde{f}=f\circ\pi$.
Then
\[
    L_i L_j\widetilde{f}(u)
    =(\nabla df)_{\pi (u)} (u{\mathbf e}_i,u{\mathbf e}_j),
    \quad u\in O(\cD), 1\le i,j\le n.
\]
In particular,
\[
    \frac12\sum_{i=1}^n L_i^2 \widetilde{f}
    =\frac12 (\Delta^\prime f)\circ\pi.
\]
\end{lemma}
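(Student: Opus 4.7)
The plan is to compute $L_i L_j \widetilde{f}(u)$ directly from the local expression \eqref{eq.hor.v.f.local} for the canonical horizontal vector fields, and to match the result against the local expression for $(\nabla df)_{\pi(u)}(u{\mathbf e}_i,u{\mathbf e}_j)$ coming from \eqref{eq.nabla.df}. The second assertion then follows by summing over $i=j$ and observing that $\{u{\mathbf e}_i\}_{i=1}^n$ is an orthonormal basis of $\cD_{\pi(u)}$ by definition of $O(\cD)$.

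For the first step, fix a local orthonormal frame $A_1,\dots,A_n$ for $\cD$, use the induced trivialization $\pi^{-1}(U) \cong U \times O(n)$, and note that the coordinate functions $e^p_q$ depend only on the fiber variable while $\widetilde{f}=f\circ\pi$ depends only on the base. Hence the vertical part of $L_j$ annihilates $\widetilde f$, and $A_\alpha$ (in the horizontal sense, with $(e^p_q)$ held fixed) annihilates the $e^k_j$. This immediately yields
\[
L_j\widetilde{f}(u)=\sum_{k=1}^n e^k_j(u)(A_k f)(\pi(u)),
\]
which is the expected identity $L_j\widetilde f(u)=(u{\mathbf e}_j f)(\pi(u))$.

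For the second step, apply $L_i$ to the above. The horizontal part $\sum_\alpha e^\alpha_i A_\alpha$ contributes $\sum_{\alpha,k}e^\alpha_i(u)e^k_j(u)(A_\alpha A_k f)(\pi(u))$, since $A_\alpha e^k_j = 0$. The vertical part $-\sum_{p,q,r,s}\omega^p_{qr}e^q_s e^r_i\partial/\partial e^p_s$ acts only on the factor $\sum_k e^k_j(A_k f)$, and since $\partial e^k_j/\partial e^p_s=\delta^p_k\delta_{sj}$, it contributes $-\sum_{p,q,r}\omega^p_{qr}e^q_j(u)e^r_i(u)(A_p f)(\pi(u))$. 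On the other hand, to evaluate $(\nabla df)_{\pi(u)}(u{\mathbf e}_i,u{\mathbf e}_j)$ we extend $u{\mathbf e}_i=\sum_\alpha e^\alpha_i(u)A_\alpha$ and $u{\mathbf e}_j=\sum_k e^k_j(u)A_k$ to local sections of $\cD$ with constant coefficients with respect to the frame; by \eqref{eq.nabla.df} and the definition $\omega^p_{qr}=g(\nabla_{A_r}A_q,A_p)$, this produces exactly the same two sums. This yields the first identity of the lemma, and the freedom to extend with constant coefficients is legitimate because $\nabla df$ is tensorial in both arguments.

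For the third step, set $i=j$ and sum; since $u\colon\R^n\to\cD_{\pi(u)}$ is a linear isometry, $\{u{\mathbf e}_i\}_{i=1}^n$ is an orthonormal basis of $\cD_{\pi(u)}$, so $\sum_{i=1}^n(\nabla df)(u{\mathbf e}_i,u{\mathbf e}_i)$ is precisely $\mathrm{tr}\,\nabla df = \triangle' f$ at $\pi(u)$, giving the ``in particular'' statement. There is no serious obstacle in the argument; the only point requiring care is distinguishing which coordinate functions depend on the base and which on the fiber, and verifying that the vertical correction term in $L_i$ produces exactly the Christoffel-type term $\omega^p_{qr}e^q_je^r_i A_p f$ that matches $\nabla_{u{\mathbf e}_i}(u{\mathbf e}_j)$. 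Once the local bookkeeping is done, the identification is automatic.
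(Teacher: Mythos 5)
Your proposal is correct and follows essentially the same route as the paper: compute $L_j\widetilde f=\sum_p e^p_j\widetilde{A_pf}$ from the local expression \eqref{eq.hor.v.f.local}, apply $L_i$ to obtain the two sums, and identify the vertical correction with $\nabla_{u{\mathbf e}_i}(u{\mathbf e}_j)=\sum_{p,q,r}e^r_ie^q_j\omega^p_{qr}A_p$ via \eqref{eq.nabla.df}. The only addition is your explicit remark on tensoriality of $\nabla df$, which the paper leaves implicit; the argument is otherwise the same.
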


\begin{proof}
By \eqref{eq.hor.v.f.local}, we have
\[
    L_j \widetilde{f}
    =\sum_{p=1}^n e^p_j \widetilde{A_p f}.
\]
Hence
\[
    L_i L_j \widetilde{f}
    =\sum_{p,q=1}^n e^q_i e^p_j
         \widetilde{A_q A_p f}
    -\sum_{p,q,r=1}^n \omega^p_{qr}
        e^q_j e^r_i \widetilde{A_p f}.
\]
Since
\[
    \nabla_{u{\mathbf e}_i}(u{\mathbf e}_j)
    =\nabla_{\sum_{p=1}^n e^p_i A_p}
     \Bigl(\sum_{q=1}^n e^q_j A_q\Bigr)
    =\sum_{p,q,r=1}^n e^{r}_i e^q_j
    \omega^p_{qr} A_p, 
\]
we obtain the first identity by \eqref{eq.nabla.df}.
The second identity immediately follows from the first one.
\end{proof}

We are now ready to prove Theorem~\ref{tm.subrie}.

\begin{proof}[Proof of Theorem~\ref{tm.subrie}]
(i) 
Extend the metric $g$ to a Riemannian metric $\tilde{g}$ on
$M$.
Let $\tilde\nabla$ be the Levi-Civita connection associated
with $\tilde{g}$, and define the partial metric connection
$\nabla$ by 
$\nabla_A B=\text{\rm pr}_{\cD}\tilde{\nabla}_AB$.
Denote by $\Delta^\prime$ the sub-Laplacian on $M$ given
by \eqref{def.lap'}.

In a local orthonormal frame $A_1,\dots,A_n$ for $\cD$, it
holds
\[
    \Delta^\prime
    =\sum_{i=1}^n A_i^2 
           -\sum_{i=1}^n \Bigl(\sum_{j=1}^n 
             \omega_j^i (A_j)\Bigr)A_i.
\]
Hence $N=(\Delta-\Delta^\prime)/2$ satisfies
\[
     N=\frac12\sum_{i=1}^n \Bigl\{
     \text{\rm div}_\mu A_i+\sum_{j=1}^n
      \omega_j^i(A_j) \Bigr\}A_i.
\]
In particular, $N\in C^\infty(M;\cD)$.

Set
\[
    L_0(u)=\ell_u(N_{\pi(u)}),\quad u\in O(\cD).
\]
Let $(r(t))_{t\ge0}$ be the unique solution to the
stochastic differential equation on $O(\cD)$
\[
    dr(t)=\sum_{i=1}^n L_i(r(t))\circ dw^i(t)
       +L_0(r(t))dt,
    \quad r(0)=u\in O(\cD),
\]
and put $X(t)=\pi(r(t))$, $t\ge0$.
Since $\pi_*L_0=N$, by Lemma \ref{lem.another.sublap},
the projected process $(X(t))_{t\ge0}$ solves the 
$\Delta/2$-martingale problem, i.e.,
\[
     \Bigl(
     f(X(t))-\int_0^t \frac12\Delta f(X(s))ds
     \Bigr)_{t\ge0}
\]
is a martingale for any $f\in C^\infty(M)$.
Thus the $\Delta/2$-diffusion process is realized
as the projected process
$(X(t))_{t\ge0}$.  
In particular, the law of $(X(t))_{t\ge0}$ is independent of
the choice of $u\in\pi^{-1}(x)$.

By the H\"ormander condition at every $x\in M$, 
$\{L_1, \ldots, L_n\}$ satisfies the partial H\"ormander 
condition at every $u\in O(\cD)$, 
that is, 
 the linear span of
$\{(\pi_*)_u L_{[I]} (u) \mid I\in \cI(\infty)\}$
is equal to  $T_{\pi(u)}M$.
Then, we know from \cite{tan} 
the non-degeneracy of the Malliavin covariance of $X(t)$ 
for all $t>0$ and $r(0)=u\in O(\cD)$.
By the integration by parts formula for manifold-valued
Wiener functionals, 
we obtain the existence of $p_t(x,y)$ 
and  the smoothness in $y\in M$ (cf. \cite{in-tan}).
The smoothness in $(t,x)\in(0,\infty)\times M$ is obtained
as an application of It\^o's formula and the stochastic flow
property of $(r(t) )_{t\ge0}$.
By the way,
the heat kernel admits the following 
explicit expression as in the Euclidean case:
\[
p_t(x,y) = {\mathbb E} [ \delta_y ( X(t))],
\]
where $\delta_y$ is the delta function 
at $y$ with respect to $\mu$.

\noindent
(ii)
Since $M$ is compact, it suffices to show that
for each $x_0\in M$, 
there exists a decreasing sequence $\{ O_j \}_{j \ge 0}$ of
neighborhoods of $x_0$ 
such that the asymptotic expansion 
\[
p_t (x, x) 
\sim
t^{-\nu/ 2} \bigl( c_0 (x)+   c_1 (x)t
+  c_2 (x) t^2+\cdots
\bigr)
\qquad
\mbox{as $t \searrow 0$}
\]
holds for every $x \in O_0$ with the following properties:
(a)~ $\inf_{x \in O_0} c_0 (x)  >0$,
(b)~for every $j \ge 0$, 
\[
\sup_{x \in O_j} \Bigl\{  |c_j (x) | 
+\sup_{0 < t \le1} 
t^{( \nu/2) -j-1}
 \Bigl|
 p_t (x, x) 
-
t^{-\nu/ 2} \bigl( c_0 (x)
+\cdots + t^j  c_j (x)
\bigr)
 \Bigr|
   \Bigr\}<\infty.
\]

To do this, let $U_1$ and $U_2$ be open sets in $M$ such
that $x_0\in U_1$, $\overline{U_1}\subset U_2$ and there
exists a local orthonormal frame $A_1,\dots,A_n$ for $\cD$
over $U_2$.
Viewing $U_2$ as a part of $\R^d$, we extends $A_1,\dots,A_n$
on $U_2$ to $C_b^\infty$-vector fields $V_1,\dots,V_n$ on 
$\mathbb{R}^d$, respectively, and extend each
$(\text{\rm div}_\mu A_i)/2$ on $U_2$ 
to $a_i\in C_b^\infty(\R^d)$.
Let $\tilde{p}_t(x,y)$ be the heat kernel with respect to
the Lebesgue measure on $\R^d$ associated with
\[
    \frac12\sum_{i=1}^n V_i^2
    +\sum_{i=1}^n a_i V_i. 
\]
Denote by $(\tilde{X}(t,x))_{t\ge0}$ the solution to the
SDE
\[
    d\tilde{X}(t)=\sum_{i=1}^n V_i(\tilde{X}(t))\circ dw^i
    +V_0(\tilde{X}(t))dt,
     \quad \tilde{X}(0)=x,
\]
where $V_0=\sum_{i=1}^n a_iV_i$.
Then $\tilde{p}_t(x,y)$ is the transition density function
of $(\tilde{X}(t,x) )_{t\ge0}$
with respect to the Lebesgue measure.

In repetition of the argument employed to show the
estimation (10.57) in \cite[p.421]{iwbk}, we obtain 
positive constants $c_1$ and $c_2$ such that
\begin{equation}\label{ineq.lcl}
    \sup_{x,y\in U_1}|\rho(y)p_t(x,y)-\tilde{p}_t(x,y)|
    \le c_1 e^{- c_2/t}
    \quad\text{as }t\searrow 0,
\end{equation}
where $d\mu(y)=\rho(y)dy^1\dots dy^d$, $(y^1,\dots,y^d)$
is the local coordinates on $U_1$ identified with 
that on $\R^d$.

Indeed, we can show \eqref{ineq.lcl} by combining the
following two observations:
(i) 
$u_f(t,x) :=\int_{U_1}
 \{\rho(y)p_t(x,y)-\tilde{p}_t(x,y)\}f(y)dy$, 
where $f\in C^\infty(\mathbb{R}^d)$ whose 
support is contained in $U_1$, 
satisfies the estimation  
$|u_f(t,x)|\le \sup_{s\in [0,t], z\in\partial U_2}|u_f(s,z)|$,
$\partial U_2$ being the boundary of $U_2$.
(ii) 
There exist positive constants $c_3$ and $c_4$ such that 
\begin{equation}\label{eq.hc}
p_s(z,y) \vee \tilde{p}_s(z,y)
\le c_3 e^{-c_4/s},
\qquad 
y\in U_1, z\in\partial U_2, s\in (0,1].
\end{equation}
A rough sketch of proof of \eqref{eq.hc} is as follows. 
The non-degeneracy of the Malliavin covariances of
$X(s)$ and  $\tilde{X}(s)$ under the (partial) 
H\"ormander condition
enables us to use  the integration
by parts formula. 
So we can replace the delta functions 
in the Feynman-Kac type representation formulae
for the heat kernels
by
continuous functions as in the proof of Lemma~\ref{lm.hnki}.
Then, the exponential decay of exit times
of semimartingales like \eqref{ineq.exitU} and
Kusuoka-Stroock's estimate like Proposition \ref{pr.KS}
for both $X$ and $\tilde{X}$ imply \eqref{eq.hc}.

It immediately follows from the two observations that
$|u_f(t,x)| \le c_3 (1 +\| \rho\|_{\infty}) \|f\|_{L^1}  e^{-c_4/t}$
for every $t \in (0,1]$ and $x \in U_1$.
Letting $f$ tend to $\pm \delta_y~(y \in U_1)$,
we prove \eqref{ineq.lcl}.

One should note that the equiregular condition has not been used so far.
Once \eqref{ineq.lcl} is obtained,
the desired asymptotic expansion of $p_t(x,x)$ follows from
that of $\tilde{p}_t(x,x)$.
Thus,
the assertion (ii) follows by applying
Corollary~\ref{cor.Gir} to $\tilde{p}_t(x,x)$.
\end{proof}

\section{Leading constant: Examples}

From the viewpoint of spectral geometry, 
it is very important to obtain an explicit expression 
of the leading constant of the asymptotic expansion of the heat trace.
However, it seems quite difficult in general.
Therefore, in this section 
we provide some examples for which the leading constant 
is explicitly computable by our method
and we check that these leading constants coincide 
with known results.

Since we have already shown in Theorem \ref{tm.subrie} that 
the asymptotic expansion of the heat kernel is uniform 
in the space parameter $x$,  
we may compute  the leading term of the asymptotics of $p_t (x,x)$ 
in the most convenient way for  each fixed $x \in M$.

We recall symbols and notations which will be used 
in subsequent examples.
The dimension of the manifold $M$ is $d$
and the number of independent linear Brownian motion is $n$.
For a given set of vector fields $\{ V_i \mid 1 \le i \le n\}$,
which are actually the coefficients of the corresponding SDE, 
$N_0$ stands for the step of the equiregular H\"ormander condition.
Matrices $B_{\cH}  (x)$ and  $B_{N_0} (x)$ 
are defined in (\ref{def.BcH}) and (\ref{def.B}), respectively.
 Recall also that
$\Gamma_{N_0} (x) 
=( \gamma^I_J (x))_{I \in \cH, J \in \cG (N_0) }$
and 
$\tilde{\Gamma}_{N_0}^{0} (x)
= \bigl( 
\delta_{|J|}^{|I|}  \gamma^I_J (x)
\bigr)_{I \in \cH, J \in \cG (N_0)}$ which is defined in (\ref{def.gntn2}).
The leading constant of $p_t (x,x)$ in the Euclidian case 
was shown in (\ref{eq.asymp4}) to be
\[
|\det B_{\cH}  (x)|^{-1}
{\mathbb E} [\delta_0 (\tilde{\Gamma}_{N_0}^{0} (x) U_1^{N_0} ) ].
\]
Here, $( U_t^{N_0})_{t \ge 0}$ is the ${\frak g}^{N_0} ({\mathbb R}^n)$-valued  
hypoelliptic diffusion process introduced in (\ref{eq.sdeg^N}).
In the manifold case, this constant should be adjusted 
by being divided by the density function as we discussed in (\ref{ineq.lcl}).

\medskip

\begin{example}~
(The case of Riemannian manifold)~
Let $M$ be a compact Riemannian manifold of dimension $d$
with the Riemannian measure $\mu$.
The div-grad type operator is the usual Laplace-Beltrami operator $\Delta_M$.
In this case $N_0 =1$, $d=n$,
$\cG (1) = \cH =\{ (i) \mid 1 \le i \le d \}$.

Take a coordinate chart $(x^1, \ldots, x^d)$. 
We denote the metric tensor by $G (x): = (g_{ij} (x) )_{1 \le i,j \le d}$.
Then,  $\mu (dx) =\sqrt{ \det G (x)}  dx^1 \cdots dx^d$ on this chart.
We write $G (x)^{-1/2} =  (\sigma^{ij} (x) )_{1 \le i,j \le d}$
and set 
$V_i (x) = \sum_j \sigma^{ij} (x) (\partial / \partial x^j)$
so that 
$\{ V_i  \mid 1 \le i \le d \}$ is a local orthonormal frame.
The Laplace-Beltrami operator is expressed as 
$
\Delta_M = \sum_{i=1}^d V_i^2 + \mbox{(a vector field).} 
$

It is easy to see that $B_{\cH}  (x) = B_{N_0} (x) = G (x)^{-1/2}$,
 $\Gamma_{N_0} (x) = \tilde{\Gamma}_{N_0}^{0} (x) = {\rm Id}$.
Hence, by adjusting the density function of $\mu$ as in (\ref{ineq.lcl}),
we see that the leading constant 
in the asymptotics of $p_t (x,x)$
equals 
\[
\frac{1}{\sqrt{ \det G (x)} }
|\det B_{\cH}  (x)|^{-1}
{\mathbb E} [\delta_0 (\tilde{\Gamma}_{N_0}^{0} (x) U_1^{N_0} ) ]
=
{\mathbb E} [\delta_0 (w^1_1, \ldots, w^d_1) ]
=
(2\pi)^{-d /2}.
\]
In particular,
we see that ${\rm Trace} (e^{ -t \Delta_M/2}) \sim (2\pi t)^{-d /2} \mu (M)$
as $t \searrow 0$.
Thus, we have recovered the well-known result in Riemannian geometry.
\end{example}

\medskip

\begin{example}\label{example.3D}~
(The case of $3D$ contact sub-Riemmanian manifold)~
In this example, we calculate the leading constant for
a three-dimensional contact sub-Riemmanian manifold and 
check that it coincides with Barilari's result in \cite{ba}.

Let $M$ be a 
compact sub-Riemmanian 
 manifold with $\dim M =3$
with a distribution $\cH$ of rank $2$.
We assume that $\cK$ is contact, namely, 
there exists a one-form $\omega$ such that $\omega \wedge d\omega$
vanishes nowhere.
As a volume on $M$, we choose the following measure.
Let $\{V_1, V_2\}$  be a local orthonormal frame of $\cK$
on a coordinate chart
and regard
$\lambda_1\wedge \lambda_2 \wedge \lambda_3$
as a measure on the chart,
where $\{\lambda_1, \lambda_2, \lambda_3\}$ is the dual basis 
of $\{V_1, V_2, [V_1, V_2]\}$.
This defines a measure $\mu$ on $M$. 
Note that $\mu$ is a constant multiple of Popp's measure 
if we use the definition (or results) in \cite{br}.
In this case $N_0 =2$, $d=3$, $n=2$,
$\cG (2) = \cH =\{ (1),  (2),  (2,1) \}$
and the Hausdorff dimension is $\nu=4$.

We use the normal coordinates for three-dimensional contact manifolds 
in the same way as in \cite{ba}.
For every $x \in M$, we can find a local coordinate chart 
$(u^1, u^2, u^3)$ 
and a local orthonormal frame $\{V_1, V_2\}$ of $\cK$ on this chart
such that 
$x$ corresponds to $0 \in {\mathbb R}^3$ and 
\begin{eqnarray*}
V_1 (u^1, u^2, u^3) &=&  
\Bigl(  \frac{\partial }{\partial u^1} + \frac{u^2}{2} \frac{\partial }{\partial u^3}    \Bigr)
+
\beta u^2 \Bigl(
u^2 \frac{\partial }{\partial u^1} - u^1 \frac{\partial }{\partial u^2}
\Bigr)
+
\gamma u^2 
\frac{\partial }{\partial u^3},
\\
V_2 (u^1, u^2, u^3) &=&
\Bigl(  \frac{\partial }{\partial u^2} - \frac{u^1}{2} \frac{\partial }{\partial u^3}    \Bigr)
-
\beta u^1 \Bigl(
u^2 \frac{\partial }{\partial u^1} - u^1 \frac{\partial }{\partial u^2}
\Bigr)
+
\gamma u^1 
\frac{\partial }{\partial u^3},
\end{eqnarray*}
where $\beta = \beta (u^1, u^2, u^3)$
and $\gamma = \gamma (u^1, u^2, u^3)$ are certain smooth functions
which vanish at $0$.
The sub-Laplacian can be written locally as $\Delta = V_1^2 +V_2^2 +
\mbox{(a section of $\cK$)}$.

From these explicit expressions, we can easily see that the density 
$\rho := d \mu/ du^1du^2du^3$ satisfies $\rho (0) = 1$.
Moreover, $B_{\cH}  (0) = B_{N_0} (0) = {\rm Id}$
and
$\Gamma_{N_0} (0) = \tilde{\Gamma}_{N_0}^{0} (0) = {\rm Id}$.
Hence,  the leading constant 
in the asymptotics of $p_t (x_0, x_0)$ associated with $\Delta /2$
equals 
\begin{equation}\label{top3D}
\frac{1}{\rho (0)}
|\det B_{\cH}  (0)|^{-1}
{\mathbb E} [\delta_0 (\tilde{\Gamma}_{N_0}^{0} (0) U_1^{N_0} ) ]
=
{\mathbb E} [\delta_{(0,0,0)} (w^1_1, w^2_1, S_1 (w^1,w^2)) ],
\end{equation}
where 
\begin{equation}\label{def.levy}
S_t (w^1,w^2) 
=
\frac12
\int_0^t  ( w^1_s  dw^2_s - w^2_s  dw^1_s)
=
\frac12
\int_0^t  ( w^1_s \circ dw^2_s - w^2_s \circ dw^1_s)
\end{equation}
is Levy's stochastic area of the two-dimensional Brownian motion. 
A well-known formula for Levy's stochastic area
(e.g. \cite[Theorem 5.8.5, p. 272]{mt}) 
states that 
\begin{equation}\label{fml.levy}
{\mathbb E} [ \exp (\sqrt{-1} \lambda  S_1 (w^1,w^2))  
\delta_{(0,0)} (w^1_1, w^2_1) ]
=
\frac{1}{2\pi} \frac{\lambda/2}{ \sinh ( \lambda/2)}
\qquad
(\lambda \in {\mathbb R}).
\end{equation}
Then, we see that the right-hand side on (\ref{top3D}) equals  
\begin{align*}
{\mathbb E} [\delta_{(0,0)} (w^1_1, w^2_1) \delta_0 (S_1 (w^1,w^2)) ]
&=
{\mathbb E} \Bigl[ 
\delta_{(0,0)} (w^1_1, w^2_1)
\frac{1}{2\pi}
\int_{{\mathbb R} }   \exp (\sqrt{-1} \lambda  S_1 (w^1,w^2))   d\lambda
\Bigr]
\\
&=
\frac{1}{(2\pi)^2} \int_{{\mathbb R} }
\frac{\lambda/2}{ \sinh ( \lambda/2)}
d\lambda
=\frac14.
\end{align*}
For a proof of the last equality, see \cite[Lemma A.2, p. 260]{bfi1}.
This constant $1/4$ coincides with one in Theorem 1, \cite{ba}.
(We need to replace $t$ in \cite{ba} by $t/2$ since
the heat kernel in \cite{ba} is associated with $\Delta$, 
not $\Delta /2$.)
In particular,
we see that ${\rm Trace} (e^{ -t \Delta /2}) \sim  \mu (M)/4 t^2$
as $t \searrow 0$.
\end{example}

\medskip

\begin{example}~
To state the next example, we review strictly
pseudoconvex CR manifolds.
For details, see \cite{dt}.

Let $M$ be a CR-manifold, i.e., $M$ is a real smooth
manifold with a complex subbundle $T_{1,0}$ of the
complexified tangent bundle $\mathbb{C} TM$ such that 
\[
  T_{1,0}\cap T_{0,1}=\{0\}\quad\text{and}\quad
  [T_{1,0},T_{1,0}]\subset T_{1,0},
\]
where $T_{0,1}=\overline{T_{1,0}}$.
Suppose that the real dimension of $M$ is $2k+1$ and the
complex dimension of $T_{1,0}$ is $k$ 
($k\ge1$).

There exists a real nowhere vanishing $1$-form $\theta$ which
annihilates 
$\cD:=\text{\rm Re}(T_{1,0}\oplus T_{0,1})$.
The associated Levi form $L_\theta$ is defined by 
\[
    L_\theta(Z,W)=-\kyosu \,d\theta(Z,W),\quad
    Z,W\in C^\infty(M;T_{1,0}\oplus T_{0,1}).
\]
We assume that $M$ is strictly pseudoconvex, i.e., $L_\theta$ 
is positive definite.

There exists a unique real vector field $T$, called the
characteristic direction, such that
\[
    \theta(T)=1,\quad T\rfloor d\theta=0,
\]
where $T\rfloor$ stands for the interior product by $T$.
The Webster metric $g_\theta$ on $TM=\cD\oplus\mathbb{R} T$ 
is defined by
\[
    g_\theta(X,Y)=d\theta(X,JY),\quad
    g_\theta(X,T)=0,\quad
    g_\theta(T,T)=1\quad\text{for }X,Y\in 
    C^\infty(M;\cD),
\]
where $J$ is a linear mapping on $T_{1,0}\oplus T_{0,1}$
such that $J|_{T_{1,0}}=\kyosu $ and
$J|_{T_{0,1}}=-\kyosu $.

In this example, let $\mu$ be the Riemannian volume measure
associated with $g_\theta$ and consider $\Delta$
associated with this $\mu$.
It should be noted that $\mu$ is a constant multiple of 
Popp's measure (cf. \cite{br}). 
Moreover, 
$\Delta$ is the standard sub-Laplacian on a CR-manifold, and
coincides with $\Delta^\prime$, which is constructed by
using the Tanaka-Webster connection on $M$ 
(\cite[Section 2.1]{dt}).

To compute locally around fixed $x\in M$, 
we introduce the Folland-Stein normal
coordinates, following \cite[Section 3.2]{dt}.
Let $T_1,\dots,T_n$ be a local orthonormal frame on an open
set $U\subset M$ for $T_{1,0}$ with respect to $L_\theta$, i.e.,
(i) $T_\alpha\in C^\infty(U;T_{1,0})$ and 
(ii) $L_\theta(T_\alpha,T_{\overline{\beta}})
  =\delta_{\alpha\beta}$ 
for  $1\le \alpha,\beta\le k$, where
$T_{\overline{\beta}}=\overline{T_\beta}$.
Set $X_\alpha=T_\alpha+T_{\overline{\alpha}}$ and
$Y_\alpha=\kyosu(T_{\overline{\alpha}}-T_\alpha)$.
Then 
\[
    g_\theta(X_\alpha,X_\beta)=g_\theta(Y_\alpha,Y_\beta)
    =2\delta_{\alpha \beta },\quad
    g_\theta(X_\alpha,Y_\beta)=0\quad
    \text{for }1\le \alpha,\beta \le k.
\]
There exists a coordinate chart
$u=(u^1,\dots,u^{2k+1})$, called {\it the
Folland-Stein normal coordinates}, such that $x$ corresponds
to $0\in\R^{2k+1}$, and 
\begin{equation}\label{eq.f.s.coord}
\left\{
\begin{aligned}
    X_\alpha & =\frac{\partial}{\partial u^{2\alpha-1}}
     +2u^{2\alpha}\frac{\partial}{\partial u^{2k+1}}
     +\sum_{i=1}^{2k} O^1 \frac{\partial}{\partial u^i}
       +O^2 \frac{\partial}{\partial u^{2k+1}},
    \\
    Y_\alpha & =\frac{\partial}{\partial u^{2\alpha}}
     -2u^{2\alpha-1}\frac{\partial}{\partial u^{2k+1}}
     +\sum_{i=1}^{2k} O^1 \frac{\partial}{\partial u^i}
       +O^2 \frac{\partial}{\partial u^{2k+1}},
    \\
    T & =\frac{\partial}{\partial u^{2k+1}}
     +\sum_{i=1}^{2k} O^1 \frac{\partial}{\partial u^i}
      +O^2 \frac{\partial}{\partial u^{2k+1}},
\end{aligned}
\right.
\end{equation}
where $O^j$, 
$j=1,2$, 
stand for functions with the property that
\[
    O^j=O\biggl(\Bigl(\sum_{i=1}^{2k} |u^i|\Bigr)^j
      +|u^{2k+1}|^{j/2}\biggr).
\]

By \eqref{eq.f.s.coord},
\begin{align*}
    & g_\theta\biggl(
     \Bigl(\frac{\partial}{\partial u^i}\Bigr)_u,
     \Bigl(\frac{\partial}{\partial u^j}\Bigr)_u
    \biggr)=2\delta_{ij}+O^1,\quad 1\le i,j\le 2k,
    \\
    & g_\theta\biggl(
     \Bigl(\frac{\partial}{\partial u^p}\Bigr)_u,
     \Bigl(\frac{\partial}{\partial u^{2k+1}}\Bigr)_u
    \biggr)=\delta_{p,2k+1}+O^1,\quad 1\le p\le 2k+1.    
\end{align*}
Thus $\mu(du)=(2^k+O^1) du^1\cdots du^{2k+1}$.
In particular, the density $\rho=d\mu/du^1\cdots du^{2k+1}$
satisfies $\rho(0)=2^k$.

Let
\[
    V_{2 \alpha-1}=\frac1{\sqrt{2}}X_\alpha,\quad
    V_{2 \alpha}=\frac1{\sqrt{2}}Y_\alpha,
    \quad 1\le \alpha \le k,
\]
where, as in the proof of Theorem~\ref{tm.subrie}, we have
extended $X_\alpha$ and $Y_\alpha$, $1\le \alpha\le k$, to
$\R^{2k+1}$, and used the same letters to indicate the
extensions.
Then what we need to investigate is the transition density
function of the 
diffusion process generated by
\[
    \frac12\sum_{i=1}^{2k}V_i^2+\sum_{i=1}^{2k} a_i V_i,
\]
where $a_i=(\text{\rm div}_\mu V_i)/2$, $1\le i\le 2k$.
Moreover, by \eqref{eq.f.s.coord}, it holds
\begin{align}
    & V_i(x)=\frac1{\sqrt{2}}
       \Bigl(\frac{\partial}{\partial u^i}\Bigr)_0,
       \quad 1\le i\le 2k,
    \label{eq.v_i.local}
    \\
    & [V_i,V_j](x)
       =-2\Bigl(\sum_{p=1}^k\delta_{i,2p-1}\delta_{j,2p}
          \Bigr)\Bigl(\frac{\partial}{\partial u^{2k+1}}
          \Bigr)_0
        +\sum_{p=1}^{2k} C_{ij}^p \Bigl(
            \frac{\partial}{\partial u^p}\Bigr)_0,
      \quad 1\le i<j\le 2k
    \label{eq.[v_i,v_j].local}
\end{align}
for some $C_{ij}^p\in\R$.
Thus we are in the situation that $d=2k+1$, $n=2k$, 
$N_0(x)=2$, and $\nu(x)=2k+2$.

We now proceed to the computation of
$\tilde{\Gamma}_2^0(0)U_t^2$.
Let 
\[
    \cG(1)=\{(i)\,|\,1\le i\le 2k\},\quad
    \cG(2)=\cG(1)\cup\{(i,j)\,|\,1\le i<j\le 2k\}.
\]
Set $\cH=\{(1),\dots,(2k),(1,2)\}$.
Then, by \eqref{eq.v_i.local} and
\eqref{eq.[v_i,v_j].local}, 
\renewcommand{\arraystretch}{1.5}
\[
    B_{\cH}(0)
    =\left(
     \begin{array}{ccc:c}
        &&& C_{12}^1 \\
        & 2^{-1/2}\mbox{\smash{\large\textrm{Id}}}_{2k} &&
        \vdots \\
        &&& C_{12}^{2k} \\ \hdashline
        0 & \cdots & 0 & -2
     \end{array}
     \right),
\]
where $\mbox{\textrm{Id}}_{2k}$ denotes the $2k$-dimensional
identity matrix. 
Hence
\[
    B_{\cH}(0)^{-1}
    =\left(
     \begin{array}{ccc:c}
        &&& 2^{-1/2} C_{12}^1 \\
        & 2^{1/2}\mbox{\smash{\large\textrm{Id}}}_{2k} &&
        \vdots \\
        &&& 2^{-1/2} C_{12}^{2k} \\ \hdashline
        0 & \cdots & 0 & -1/2
     \end{array}
     \right) \quad\text{and}\quad
   |\det B_{\cH}(0)|=2^{-k+1}.
\]
This and \eqref{eq.[v_i,v_j].local} yield
\[
    B_{\cH}(0)^{-1} V_{[i,j]}
    =\begin{pmatrix}
       2^{1/2} C_{ij}^1
          -2^{1/2} C_{12}^1\mathbf{1}_{\cG_0(2)}((i,j))
       \\
       \vdots
       \\
       2^{1/2} C_{ij}^{2k}
          -2^{1/2} C_{12}^{2k}\mathbf{1}_{\cG_0(2)}((i,j))
       \\
       \mathbf{1}_{\cG_0(2)}((i,j))
     \end{pmatrix},
     \quad 1\le i<j\le 2k,
\]
where $\cG_0(2)=\{(2i-1,2i)\,|\,1\le i\le k\}$ and
$\mathbf{1}_{\cG_0(2)}$ is the indicator function of
$\cG_0(2)$.
Hence we have
\[
    \tilde{\Gamma}_2^0(0)
    =\left(
     \begin{array}{ccc:c}
        &&&  \\
        & \mbox{\smash{\large\textrm{Id}}}_{2k} &&
        \mbox{\smash{\Large\textrm{0}}}_{2k\times k(2k-1)} \\
        &&& \\ \hdashline
        0 & \cdots & 0 & 
        \bigl(\mathbf{1}_{\cG_0(2)}((i,j))
            \bigr)_{(i,j)\in\cG(2)\setminus\cG(1)}
     \end{array}
     \right),
\]
where $0_{2k\times k(2k-1)}$ is the
$2k\times k(2k-1)$-zero matrix.
This implies that
\[
    \tilde{\Gamma}_2^0(0)U_t^2
    =\begin{pmatrix}
      w_t^1 \\ \vdots \\ w_t^{2k} \\
      \sum_{i=1}^k S_t(w^{2i-1},w^{2i})
     \end{pmatrix},
\]
where $S_t(w^{2i-1},w^{2i})$ is defined by \eqref{def.levy}.

As in Example~\ref{example.3D}, using the independence of 
$(w^{2i-1},w^{2i})$, $1\le i\le k$, we have
\begin{align*}
    {\mathbb E}[\delta_0(\tilde{\Gamma}_2^0(0)U_1^2)]
    & = \frac{1}{2\pi} \int_{\R} {\mathbb E}\Bigl[
             \delta_0(w_1^1,\dots,w_1^{2k})
             \exp\Bigl(\kyosu\lambda\sum_{i=1}^k
                 S_1(w^{2i-1},w^{2i})\Bigr)\Bigr]d\lambda
    \\
    & = \frac{1}{2\pi} \int_{\R} \prod_{i=1}^k
          {\mathbb E}\Bigl[
             \delta_0(w_1^{2i-1},w_1^{2i})
             \exp\Bigl(\kyosu\lambda
                 S_1(w^{2i-1},w^{2i})\Bigr)\Bigr]d\lambda
    \\
    & = \frac{1}{2\pi} \int_{\R} \Bigl(
           \frac1{2\pi}\,
           \frac{\lambda/2}{\sinh(\lambda/2)}\Bigr)^k
           d\lambda.
\end{align*}
To see the last identity, we have used
\eqref{fml.levy}.
Hence the leading constant in the asymptotics of $p_t(x,x)$
associated with $\Delta/2$ equals
\[
   \frac1{\rho(0)} |\det B_{\cH}(0)|^{-1}
   {\mathbb E}[\delta_0(\tilde{\Gamma}_2^0(0)U_1^2)]
   =\frac12\,\frac{1}{2\pi} \int_{\R} \Bigl(
           \frac1{2\pi}\,
           \frac{\lambda/2}{\sinh(\lambda/2)}\Bigr)^k
           d\lambda.
\]
The right-hand side is the heat kernel $p_1(0,0)$  
associated with the sub-Laplacian on the
Heisenberg group of dimension $2k+1$
(cf. \cite[Th\'eor\`eme 1]{ga}).
\end{example}

\medskip

Before providing our final example, 
we fix some notations.
Let $({\frak k}_1 \oplus {\frak k}_2, g)$ be such that
{\rm (i)}~${\frak k}_1 \oplus {\frak k}_2$ is a finite-dimensional 
graded Lie algebra (with ${\frak k}_j =\{0\}$ for $j \in {\mathbb Z} \setminus \{1,2\}$)
and $g$ is an inner product on ${\frak k}_1$.
Two such 
$({\frak k}_1 \oplus {\frak k}_2, g)$
and 
$(\hat{\frak k}_1 \oplus \hat{\frak k}_2, \hat{g})$
are said to be {\it isometrically isomorphic}
and denoted by 
$({\frak k}_1 \oplus {\frak k}_2, g)  \cong 
(\hat{\frak k}_1 \oplus \hat{\frak k}_2, \hat{g})$
if there exists an isomorphism 
$\phi\colon {\frak k}_1 \oplus {\frak k}_2 \to \hat{\frak k}_1 \oplus \hat{\frak k}_2$ 
of graded Lie algebras
whose restriction to ${\frak k}_1$ preserves the inner product.
An 
isometrical isomorphism class in this sense 
is denoted by $[({\frak k}_1 \oplus {\frak k}_2, g)]$.

Let $({\frak k}_1 \oplus {\frak k}_2, g)$ be as above
and write $n = \dim {\frak k}_1$ and $p = \dim {\frak k}_2$.
An adapted basis of this Lie algebra is defined to be a linear basis
$\{ v_1,\ldots, v_n ; z_1, \ldots, z_p\}$
such that $\{ v_1,\ldots, v_n\}$ is an orthonormal basis of $( {\frak k}_1, g)$
and 
$\{z_1, \ldots, z_p\}$ be a linear basis of ${\frak k}_2$.
Write 
\[
[v_i, v_j] = \sum_{k=1}^p C_{ij}^k  z_k
\qquad
(1\le i,j \le n).
\]
We call $\{ C_{ij}^k \}$ 
the structure constants  with respect to this adapted basis.
(Note that there are no other non-trivial Lie brackets.)
It is easy to see that
$({\frak k}_1 \oplus {\frak k}_2, g)  \cong 
(\hat{\frak k}_1 \oplus \hat{\frak k}_2, \hat{g})$
if and only if 
we can find an adapted basis of $({\frak k}_1 \oplus {\frak k}_2, g)$
and 
an adapted basis of $(\hat{\frak k}_1 \oplus \hat{\frak k}_2, \hat{g})$
whose structure constants exactly coincide.

\medskip

\begin{example}~
Let $(M, \cD, g)$ be a step-two equiregular compact sub-Riemannian manifold 
with $\dim M =n+p$ and ${\rm rank}~ \cD =n$ ($n \ge 1, ~p\ge 1$)
and let $\mu$ be Popp's measure.
In this case, the Hausdorff dimension is  $\nu =n +2p$. 
By the equiregularity,  $\cD_1 (x) \oplus ( \cD_2 (x)/ \cD_1 (x))$
has a natural structure of graded Lie algebra.
Clearly, $N_0 =2$ and we set
\[
    \cG(1)=\{(i)\,|\,1\le i\le n\},\quad
    \cG(2)=\cG(1)\cup\{(i,j)\,|\,1\le i<j\le n\}.
\]

The aim of this example is to calculate the leading constant 
$c_0 (x)$ explicitly in a probabilistic way
and
show that it depends only on 
$[(\cD_1 (x) \oplus ( \cD_2 (x)/ \cD_1 (x)), g_x) ]$.
(More precisely,  if  $(\hat{M}, \hat{\cD}, \hat{g})$ is another such 
sub-Riemannian manifold  and 
\begin{equation}\label{eq.isom}
(\cD_1 (x) \oplus ( \cD_2 (x)/ \cD_1 (x)), g_x)
\cong
(\hat{\cD}_1 (\hat{x}) \oplus ( \hat{\cD}_2 (\hat{x})/ \hat{\cD}_1 (\hat{x})), \hat{g}_{\hat{x}})
\end{equation}
holds for $x \in M$ and $\hat{x}\in \hat{M}$,
then $c_0 (x)= c_0 (\hat{x})$ holds.)

To this end  we use (a very special case of)
Bianchini-Stefani's adapted chart.
As was already demonstrated in \cite{pa, ha}, this chart looks
quite useful for short time asymptotic problems on sub-Riemannian manifolds.
Take $x \in M$ arbitrarily.
Then, 
by \cite[Corollary 3.1]{bs},
there exists a local coordinate chart $(u^1, \ldots, u^{n+p})$
around $x$
such that 
$x$ corresponds to $0 \in {\mathbb R}^{n+p}$ and 
$\cD_1 (x)$ equals the linear span of 
$\{( \frac{\partial }{\partial u^1})_0, \ldots,  (\frac{\partial }{\partial u^n})_0 \}$.
Note that this equality holds only at $x$
and such a chart is obviously not unique.

Take a local frame $\{V_1, \ldots, V_n, Z_1, \ldots, Z_p\}$ 
of $TM$ 
around $x$ such that $\{V_1, \ldots, V_n\}$ forms a local orthonormal frame
of $\cD =\cD_1$.
Such a local frame is called an adapted frame.
As usual the structure constants $\{ C_{ij}^k \}$ is defined by  
\[
[V_i, V_j] (x)= \sum_{k=1}^p C_{ij}^k  Z_k (x)
\quad
\mod
\cD_1(x)
\qquad
(1\le i,j \le n).
\]
The rank of a $p \times n(n-1)/2$ matrix 
$( C^k_{ij} )_{ 1 \le k \le p, (i,j) \in \cG(2) \setminus \cG(1) }$
is  $p$ due to the H\"ormander condition at $x$.
We will denote this matrix by $C$ for simplicity.

Changing the coordinates of 
$(u^1, \ldots, u^{n})$ and  $(u^{n+1}, \ldots, u^{n+p})$
by linear maps, 
we may additionally assume that 
$V_i (x) = (\frac{\partial }{\partial u^i})_0$ for $1 \le i \le n$ and 
$Z_j (x) = (\frac{\partial }{\partial u^{j+n}} )_0$
modulo $\cD_1(x)$ for $1 \le j \le p$. 
Then, it is obvious that
\[
B_{2} (0) 
=
\left(
     \begin{array}{c:c}
      \textrm{Id}_{n} & *
     \\ \hdashline
      \textrm{0}_{p \times n} & C
                  \end{array}
     \right).
     \]
Choose $(i_1, j_1), \ldots, (i_p, j_p) \in \cG(2) \setminus \cG(1)$
so that 
$( C^k_{i_a j_a} )_{ 1 \le k, a \le p}$ is an 
invertible $p \times p$ matrix
and we set $\cH =\cG (1) \cup \{ (i_a, j_a) \mid 1 \le  a \le p\}$.
Then, it is easy to see that
\[
B_{\cH} (0) \tilde{\Gamma}_2^0 (0)
=
\left(
     \begin{array}{c:c}
      \textrm{Id}_{n} &  \textrm{0}_{p \times n(n-1)/2}
           \\ \hdashline
      \textrm{0}_{p \times n} & C
                  \end{array}
     \right).
     \]

According to \cite{br}, Popp's measure can be computed from 
the structure constants for 
the local adapted frame as follows.
Set $\cC^{kl} = \langle C^{k}_{\bullet\star}, 
C^{l}_{\bullet\star},\rangle_{HS}$, $1 \le k,l \le p$,
where the right-hand side stands for the Hilbert-Schmidt
inner product for $n \times n$-matrices.
Then, we have $\mu (d \theta) = \rho (u) d u^1 \cdots d u^{n+p}$
with
$\rho (0) = \{\det (\cC^{kl})_{1 \le k,l \le p} \}^{-1/2}$.

Combining these all, we see that 
\begin{align*}
 c_0 (x)
 &=  \frac1{\rho(0)}  |\det B_{\cH}(0)|^{-1}
   {\mathbb E}[\delta_0(\tilde{\Gamma}_2^0(0)U_1^2)]
  \\
   &=
   \sqrt{\det (\cC^{kl})} 
     {\mathbb E}[\delta_0(  B_{\cH} (0) \tilde{\Gamma}_2^0 (0)  U_1^2)]
     \\
   &=
   \sqrt{\det (\cC^{kl})} 
   {\mathbb E} \Bigl[\delta_0( w^1_1, \ldots, w^n_1 ) \cdot 
   \delta_0 \Bigl(  \sum_{i<j} C^1_{ij} S_1^{ij}, 
   \ldots,  \sum_{i<j} C^p_{ij} S_1^{ij} \Bigr) \Bigr],
              \end{align*}
where 
we wrote $S_t^{ij}$ for L\'evy's stochastic area
$S_t(w^{i},w^{j})$ defined by (\ref{def.levy}) for simplicity.
The generalized expectation on the right-hand side 
is computed in Appendix. 
Thus, we obtain 
\[
 c_0 (x)
=
  \sqrt{\det (\cC^{kl})} 
  \frac1{(2\pi )^{(n/2)+p}} \int_{\R^p}
       \biggl[\det\biggl(
         \frac{\sinh(\kyosu (\zeta\cdot C)/2)}{
          \kyosu(\zeta\cdot C)/2}\biggr)\biggr]^{-1/2} d\zeta,
             \]
where for $\zeta=(\zeta^1,\dots,\zeta^p)\in\R^p$,
 $(\zeta\cdot C)$ is the $n\times n$ skew symmetric
matrix defined by
\[
    (\zeta\cdot C)
    = \Bigl(\sum_{k=1}^p  \zeta^k 
     C_{ij}^k\Bigr)_{1\le i,j\le n}.
\]
Note that $c_0 (x)$ depends only on the structure constants.

Finally, let us assume that $\hat{x} \in \hat{M}$ satisfies (\ref{eq.isom}).
Then, we can find a local  adapted frame 
$\{\hat{V}_1, \ldots, \hat{V}_n, \hat{Z}_1, \ldots, \hat{Z}_p\}$ 
around $\hat{x}$ 
which 
yields the same structure constants $(C^k_{ij})$.
By doing the same computation again, we see $c_0 (x)= c_0 (\hat{x})$. 
\end{example}


\appendix

\section{On step-two nilpotent Lie groups}

In \cite{ga}, Gaveau obtained exlicit expressions for the 
heat kernel for the Heisenberg groups and the free
nilpotent Lie groups of step-two.
The heat kernels for all nilpotent Lie groups of step-two were
obtained by Cygan (\cite{cy}).
They used an analytic method.
In this section, we recover such expressions by using a
probabilistic method.
Indeed, we shall obtain the expressions 
by using
an explicit expression of a stochastic oscillatory integral
with a quadratic Wiener functional as its phase function
(cf. \cite{tan2}). 
L\'evy's stochastic area defined in \eqref{def.levy} is a
typical example of such a quadratic Wiener functional.

We start this section with a preliminary observation on
linear combinations of 
L\'evy's stochastic areas.
For $t\ge0$, $x\in\R^n$, $1\le i,j\le n$, and an 
$n\times n$ skew symmetric matrix 
$\Xi=(\Xi^{ij})_{1\le i,j\le n}$, set
\[
    S_t^{ij}(x)=\int_0^t \{(x+w_s^i)\circ dw_s^j
       -(x+w_s^u)\circ dw_s^i\}
    \quad\text{and}\quad
    S_t(\Xi;x)
    =\frac12 \sum_{1\le i<j\le n} \Xi^{ij} 
     S_t^{ij}(x).
\]

Our first goal of this section is revisiting the following
expression (\cite{cy,ga}) by using the computation of
oscillatory integrals associated with quadratic Wiener
functionals in \cite{tan2}.
This is a generalization of the famous formula \eqref{fml.levy}
 for L\'evy's stochastic area.

\begin{theorem}\label{thm.h.kernel}
It holds that
\begin{multline}\label{t.h.kernel.1}
   {\mathbb E}[e^{\kyosu S_t(\Xi;x))} \delta_y(x+w_t)]
   =\frac1{(2\pi t)^{n/2}} 
     \biggl[\det \Bigl(
           \frac{\sinh(\kyosu t \Xi/2)}{
                \kyosu t \Xi/2}\Bigr)\biggr]^{-1/2}
   \\
    \times \exp\biggl(-\frac{\kyosu}2\langle\Xi x,y
          \rangle_{\R^n}
     -\frac1{2t}\langle T(t;\Xi)^{-1}(y-x),(y-x)
          \rangle_{\R^n}\biggr), \quad y\in\R^n,
\end{multline}
where, for $n\times n$ matrix $B$, 
\[
    \frac{\sinh(B)}{B}=\sum_{k=1}^\infty 
          \frac1{(2k-1)!}B^{2k-2}, \quad
     \cosh(B)=\sum_{k=0}^\infty 
          \frac1{(2k)!}B^{2k},
\]
and
\[
     T(t;\Xi)=
       \frac{\sinh(\kyosu t\Xi/2)}{
                            \kyosu t\Xi/2}
          \bigl(\cosh(\kyosu t\Xi/2)\bigr)^{-1}.
\]
\end{theorem}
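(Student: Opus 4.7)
My plan has four steps: an orthogonal reduction, a factorization into independent $2$-dimensional pieces, an appeal to the classical shifted L\'evy area formula, and a matrix-valued reassembly.

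\textbf{Step 1 (orthogonal reduction).} I would first observe that both sides of \eqref{t.h.kernel.1} are equivariant under the substitution $\Xi \mapsto O^\top \Xi O$, $x\mapsto O^\top x$, $y\mapsto O^\top y$ for $O\in O(n)$: the left side transforms this way because $\mathbb{P}$ is invariant under $w\mapsto Ow$, $S_t(\Xi;x)[Ow]=S_t(O^\top\Xi O;O^\top x)[w]$ by a direct computation, and $\delta_y(x+Ow_t)=\delta_{O^\top y}(O^\top x+w_t)$; the right side transforms the same way because $\sinh(\kyosu t\,\cdot/2)/(\kyosu t\,\cdot/2)$, $T(t;\cdot)$ and the skew-bilinear form $\la \Xi x,y\ra$ are all conjugation-equivariant. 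Using the real skew-normal form of skew-symmetric matrices we can then choose $O$ so that $O^\top\Xi O$ is block-diagonal with $2\times 2$ blocks $\mu_j J$, $J=\bigl(\begin{smallmatrix}0&1\\-1&0\end{smallmatrix}\bigr)$, $\mu_j\ge 0$ for $1\le j\le k$, together with a zero block of size $r=n-2k$, so it suffices to prove \eqref{t.h.kernel.1} in this block-diagonal case.

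\textbf{Step 2 (factorization).} In the block-diagonal case the coordinates split into $k$ independent $2$-dimensional Brownian motions $w^{(j)}=(w^{2j-1},w^{2j})$ and $r$ independent $1$-dimensional Brownian motions $w^l$, so
$$
S_t(\Xi;x)=\sum_{j=1}^k \mu_j A_t^{(j)}(x_{(j)}),\qquad
A_t^{(j)}(a):=\tfrac12\int_0^t\bigl((a^1+w_s^{2j-1})\circ dw_s^{2j}-(a^2+w_s^{2j})\circ dw_s^{2j-1}\bigr),
$$
with $x_{(j)}=(x^{2j-1},x^{2j})$. The delta function likewise factors, so
$$
\mathbb{E}\bigl[e^{\kyosu S_t(\Xi;x)}\delta_y(x+w_t)\bigr]
=\prod_{j=1}^k \mathbb{E}\bigl[e^{\kyosu\mu_j A_t^{(j)}(x_{(j)})}\delta_{y_{(j)}}(x_{(j)}+w_t^{(j)})\bigr]\cdot \prod_{l=2k+1}^n\frac{e^{-(y^l-x^l)^2/2t}}{\sqrt{2\pi t}}.
$$

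\textbf{Step 3 (classical shifted L\'evy area formula).} For each $2$-dimensional factor I would invoke the classical formula
$$
\mathbb{E}\bigl[e^{\kyosu\mu A_t(a)}\delta_{b}(a+w_t)\bigr]
=\frac{1}{2\pi t}\,\frac{\mu t/2}{\sinh(\mu t/2)}\exp\!\Bigl(-\frac{\kyosu\mu}{2}(a^1b^2-a^2b^1)-\frac{\mu/4}{\tanh(\mu t/2)}|b-a|^2\Bigr),
$$
which is a special case of the oscillatory-integral formula of \cite{tan2}. To see how this arises (if not simply quoted), note first that $A_t(a)=A_t(0)+\tfrac12(a^1 w_t^2-a^2 w_t^1)$, which reduces to the centered case; the centered case is then obtained by Fourier-inverting the joint characteristic function ${\mathbb E}[\exp(\kyosu\mu A_t(0)+\kyosu\la \xi,w_t\ra)]=(\mu t/2)/\sinh(\mu t/2)\cdot\exp(-\tfrac{t}{2}\tanh(\mu t/2)/(\mu t/2)|\xi|^2)$, a classical Gaussian evaluation via Girsanov or via diagonalization of the symplectic form.

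\textbf{Step 4 (reassembly).} Finally I would repackage the product in Step 3 in matrix form. In the block-diagonal case, $B:=\kyosu t\Xi/2$ satisfies $B^2=(t\mu_j/2)^2 I_2$ on each $2\times 2$ block, hence $\sinh(B)/B$ equals $\sinh(\mu_j t/2)/(\mu_j t/2)\cdot I_2$ on each block and $I_r$ on the zero block; consequently $[\det(\sinh B/B)]^{-1/2}=\prod_j (\mu_j t/2)/\sinh(\mu_j t/2)$. A parallel computation gives $T(t;\Xi)^{-1}=\operatorname{diag}\bigl((\mu_j t/2)/\tanh(\mu_j t/2)\cdot I_2\,;\,I_r\bigr)$, matching the Gaussian exponents of the $2$-dimensional factors and the $1$-dimensional Gaussian densities. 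Finally $\la \Xi x,y\ra=\sum_j \mu_j(x_{(j)}^1 y_{(j)}^2-x_{(j)}^2 y_{(j)}^1)$, matching the cross terms. Collecting the factors yields exactly the right-hand side of \eqref{t.h.kernel.1}, completing the proof.

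\textbf{Main obstacle.} The only genuinely analytic step is the $2$-dimensional formula in Step 3; everything else is bookkeeping. Within the bookkeeping, the subtlest point is the careful sign tracking in the identity $A_t(a)=A_t(0)+\tfrac12(a^1w_t^2-a^2w_t^1)$ and in verifying that the orthogonal change of variables in Step 1 intertwines the skew-bilinear phase $\la \Xi x,y\ra$ and the quadratic form $\la T(t;\Xi)^{-1}(y-x),y-x\ra$ in the way claimed, so that the block-diagonal identity really implies the general one.
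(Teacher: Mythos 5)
Your argument is correct in outline but takes a genuinely different route from the paper's. The paper does not diagonalize $\Xi$: after the same initial reduction $S_t(\Xi;x)=S_t(\Xi;0)-\tfrac12\la \Xi x,w_t\ra$ (which produces the cross term $e^{-\kyosu\la\Xi x,y\ra/2}$ directly), it invokes the general formula of \cite{tan2} for stochastic oscillatory integrals with quadratic phase function, which expresses ${\mathbb E}[e^{\kyosu S_t(\Xi;0)}\delta_y(w_t)]$ in terms of two matrices $A(s,t;\Xi)$ and $C(t;\Xi)$ built from matrix exponentials; the remainder of the proof is functional calculus of the skew-symmetric matrix $\Xi$, showing $\det A(0,t;\Xi)=\det\cosh(\kyosu t\Xi/2)$ and $C(t;\Xi)=t\,T(t;\Xi)$. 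You instead pass to the real skew-normal form, factor the expectation over independent two-dimensional blocks, quote the classical shifted L\'evy area formula (essentially \eqref{fml.levy}), and reassemble. Both are sound. Your route is more elementary, needing only the scalar two-dimensional formula already used in Section~7; the paper's route avoids the spectral decomposition entirely and showcases the quadratic-Wiener-functional machinery that this appendix is meant to advertise. Your Step~1 equivariance check is exactly the extra bookkeeping the paper's approach dispenses with.

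Two sign corrections are needed, though they do not affect the conclusion. With $\Xi=\mu J$, $J=\bigl(\begin{smallmatrix}0&1\\-1&0\end{smallmatrix}\bigr)$ on a block, one has $\Xi x=\mu(x^2,-x^1)$, hence $\la\Xi x,y\ra=\mu(x^2y^1-x^1y^2)$ and $-\tfrac{\kyosu}{2}\la\Xi x,y\ra=+\tfrac{\kyosu\mu}{2}(x^1y^2-x^2y^1)$; on the other hand the shift identity $A_t(a)=A_t(0)+\tfrac12(a^1w_t^2-a^2w_t^1)$ evaluated on $\{w_t=b-a\}$ contributes $+\tfrac12(a^1b^2-a^2b^1)$ to the phase. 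So the correct two-dimensional formula carries $\exp\bigl(+\tfrac{\kyosu\mu}{2}(a^1b^2-a^2b^1)\bigr)$, not the minus sign in your Step~3, and your Step~4 identity $\la\Xi x,y\ra=\sum_j\mu_j(x^1_{(j)}y^2_{(j)}-x^2_{(j)}y^1_{(j)})$ is also off by a sign; the two slips cancel in your matching, so the theorem as stated still comes out, but both intermediate formulas should be fixed. Separately, in your parenthetical derivation the joint characteristic function ${\mathbb E}[\exp(\kyosu\mu A_t(0)+\kyosu\la\xi,w_t\ra)]$ has prefactor $1/\cosh(\mu t/2)$, not $(\mu t/2)/\sinh(\mu t/2)$; the latter is the prefactor of the delta-function (conditional) version that you actually use in Step~3.
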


\begin{remark}\label{rem.nonzero}
For an $n\times n$ skew symmetric matrix $B$, take
$\lambda_1,\dots,\lambda_k\in \R\setminus\{0\}$ such
that 
$\pm\kyosu\lambda_1,\dots,\pm\kyosu \lambda_k,
 \underbrace{0,\dots,0}_{n-2k}$ are its eigenvalues.
Then 
\begin{align*}
   \det\Bigl(\frac{\sinh(\kyosu B)}{\kyosu B}\Bigr)
   & =\prod_{i=1}^k \Bigl(\frac{\sinh\lambda_i}{\lambda_i}
     \Bigr)^2\ne0,
   \\
   \det(\cosh(\kyosu B))
   & =\prod_{i=1}^k (\cosh\lambda_i)^2
   \ne0.
\end{align*}
Thus, the reciprocal number and the inverse matrix appearing
in \eqref{t.h.kernel.1} are both well-defined.
\end{remark}

\begin{proof}
If $i\ne j$, then 
$w_s^i\circ dw_s^j=w_s^i dw_s^j$.
Since $\Xi$ is skew symmetric,
\[
    S_t(\Xi;x)
    =\frac12\int_0^t \langle  (-\Xi) (x+w_s),
          dw_s\rangle_{\R^n}
    =\frac12 \int_0^t \langle (-\Xi) w_s,
       dw_s\rangle_{\R^n}
           -\frac12 \langle\Xi x,w_t\rangle_{\R^n}.
\]
By the skew symmetry of $\Xi$ again, we have
\begin{equation}\label{eq.pr.19}
    {\mathbb E}[e^{\kyosu S_t(\Xi;x)}\delta_y(x+w_t)]
    =e^{-\kyosu\langle\Xi x,y\rangle/2}
       \mathbb{E}\biggl[\exp\biggl(
        \frac{\kyosu}2 \int_0^t 
           \langle  (-\Xi) w_s,
              dw_s\rangle \biggr)
      \delta_{y-x}(w_t)\biggr].
\end{equation}
Thus it suffices to compute 
${\mathbb E}[e^{\kyosu S_t(\Xi;0)}\delta_y(w_t)]$.

Applying \cite[Corollary 1.1 and Example 4.2]{tan2},
we obtain
\begin{equation}\label{eq.pr.20}
    {\mathbb E}[e^{\kyosu S_t(\Xi;0)}\delta_y(w_t)]
    =\frac1{\sqrt{\det A(0,t;\Xi)}}
     \frac1{(2\pi)^{n/2}\sqrt{C(t;\Xi)}}
     \,\exp\biggl(-\frac12\langle C(t;\Xi)^{-1}y,y
     \rangle_{\R^n}\biggr)
\end{equation}
where
\begin{align*}
    & A(s,t;\Xi)=\frac12\{I+\exp( -
             \kyosu(s-t)\Xi)\},
    \\
    & C(t;\Xi)=\int_0^t (A(0,s;\Xi)^{-1})^{*}
        A(0,s;\Xi)^{-1} ds,
\end{align*}
and, for $n\times n$-matrix $B$, 
$\exp(B)=\sum_{k=0}^\infty \frac1{k!}B^k$ and 
$B^*$ is the transposed matrix of $B$.
It should be emphasized that the superscript ${}^*$
indicates just being transposed and no complex conjugate are
taken even if $B$ is a complex matrix.
We shall compute $A(s,t;\Xi)$ and $C(t;\Xi)$.

First rewrite as
\begin{equation}\label{eq.A(t;xi)}
    A(s,t;\Xi)
    =\cosh\Bigl(\frac{\kyosu}2 (s-t)\Xi\Bigr)
     \exp\Bigl( -
       \frac{\kyosu}2 (s-t)\Xi
       \Bigr).
\end{equation}
Since $\Xi$ is skew symmetric, 
\[
    \det\biggl(\exp\Bigl(
       -  \frac{\kyosu}2 (s-t)\Xi\Bigr)\biggr)=1.
\]
Thus we have
\begin{equation}\label{eq.pr.21}
    \det A(0,t;\Xi)
    =\det\Bigl(\cosh\Bigl(\frac{\kyosu t}2 \Xi
      \Bigr)\Bigr).
\end{equation}

Next, due to the skew symmetry of $\Xi$ again, by
\eqref{eq.A(t;xi)}, we have 
\[
    A(s,t;\Xi)^{*}
    =\exp\Bigl(\frac{\kyosu}2
         (s-t)\Xi\Bigr)
     \cosh\Bigl(\frac{\kyosu}2 (s-t)\Xi\Bigr).
\]
In conjunction with \eqref{eq.A(t;xi)} again, this implies
\[
    A(s,t;\Xi)A(s,t;\Xi)^{*}
    =\Bigl(\cosh\Bigr(\frac{\kyosu}2 (s-t)\Xi
      \Bigr)\Bigr)^2.
\]
Hence
\[
    (A(s,t;\Xi)^{-1})^{*} A(s,t;\Xi)^{-1}
    =\Bigl(\cosh\Bigr(\frac{\kyosu}2 (s-t)\Xi
      \Bigr)\Bigr)^{-2}.
\]
Recall that 
$\sinh(B)=\frac12(\exp(B)-\exp(-B))$ and 
\[
    \frac{d}{ds}  s\frac{\sinh(sB)}{sB}
     \bigl(\cosh(sB)\bigr)^{-1}
    =\bigl(\cosh(sB)\bigr)^{-2}.
\]
Plugging this into the definition of $C(t;\Xi)$, we obtain
\begin{equation}\label{eq.pr.22}
    C(t;\Xi)
    =t \frac{\sinh(\kyosu t\Xi/2)}{\kyosu t\Xi/2}
    \Bigl(\cosh\Bigr(\frac{\kyosu}2 t\Xi\Bigr)^{-1}.
\end{equation}

Plugging \eqref{eq.pr.21} and \eqref{eq.pr.22} into
\eqref{eq.pr.20}, we obtain
\[
    {\mathbb E}[e^{\kyosu S_t(\Xi;0)}\delta_y(w_t)]
    = \frac1{(2\pi t)^{n/2}} 
     \biggl[\det \Bigl(
           \frac{\sinh(\kyosu t \Xi/2)}{
                \kyosu t \Xi/2}\Bigr)\biggr]^{-1/2}
    \exp\biggl(
     -\frac12\langle C(t;\Xi)^{-1}y,y
          \rangle_{\R^n}\biggr).
\]
Combined with \eqref{eq.pr.19}, this implies the desired
expression \eqref{t.h.kernel.1}.
\end{proof}

\begin{remark}
Given 
$\Theta=(\Theta_1,\dots,\Theta_d)\in
 C^\infty(\mathbb{R}^d;\mathbb{R}^d)$
whose 
derivatives of all order are at most polynomial
growth, the Schr\"odinger operator $L$ with 
the vector potential $\Theta$ is given by
\[
    L=-\frac12\sum_{\alpha=1}^d
    \biggl(\frac{\partial}{\partial x^\alpha}+\kyosu
    \Theta_\alpha \biggr)^2.
\]
The heat kernel $q_t(x,y)$ associated with $L$ 
possesses a probabilistic expression as follows (for
example, see \cite[Theorem 5.5.7]{mt}).
\[
    q_t(x,y)=\mathbb{E}[e(t,x)\delta_y(x+w_t)],
\]
where $e(t,x)$ is given by
\[
    e(t,x)=\exp\biggl(\kyosu \sum_{i=1}^n \int_0^t
     \Theta_\alpha(x+w_s)\circ dw_s^i
     \biggr).
\]
If $\Theta(x)= - \frac12\Xi x$ for 
$x\in \mathbb{R}^n$, then  
$e(t,x)=\exp(\kyosu S_t(\Xi;x))$ and hence
the right-hand side of \eqref{t.h.kernel.1} gives an
explicit expression of $q_t(x,y)$. 
\end{remark}

We now proceed to step-two nilpotent Lie groups.
For this purpose, let $G$ be a $(n+p)$-dimensional connected
and simply connected step-two nilpotent Lie group with the
Lie algebra ${\mathfrak g}$, where $p$ is the dimension of
$[{\mathfrak g},{\mathfrak g}]$.
Using the diffeomorphism $\exp:{\mathfrak g}\to G$ and
suitable bases of $\{Z_k\}_{k=1}^p$ and $\{X_i\}_{i=1}^n$ of 
$[{\mathfrak g},{\mathfrak g}]$ and its complement,
respectively,
we introduce a global coordinate $(x,z)\in\R^{n+p}$ on $G$;
for $g\in G$, $g=\exp\Bigl(\sum_{i=1}^n x^i X_i+
   \sum_{k=1}^p z^k Z_k\Bigr)$, 
where $x=(x^1,\dots,x^n)\in\R^n$ and
$z=(z^1,\dots,z^p)\in\R^p$.
In terms of this coordinate, the product
$\times$ on $G$ is given
by 
\begin{equation}\label{eq.*.on.G}
    (x,z)\times(u,v)
    =\Bigl(x+u,z+v+\frac12\sum_{i,j=1}^n
               x_iu_j C_{ij}\Bigr),
\end{equation}
where 
\[
    [X_i,X_j]=\sum_{k=1}^p C_{ij}^k Z_k
    \quad\text{and}\quad
    C_{ij}=\begin{pmatrix} C_{ij}^1 \\ \vdots \\
      C_{ij}^p\end{pmatrix}\in\R^p.
\]

Let $\widetilde{X}_i$ and $\widetilde{Z}_k$ 
be the left-invariant vector fields
associated with $X_i$ and $Z_k$, 
$1\le i\le n$ and $1\le k\le p$, respectively.
Set ${\mathbf e}_i=(\delta_{ij})_{1\le j\le n}\in\R^n$ 
and
$\hat{\mathbf e}_k=(\delta_{ik})_{1\le i\le p}\in\R^p$.
Since $X_i=\frac{d}{dt}\bigr|_{t=0}(t{\mathbf e}_i,0)$ 
and 
$Z_k=\frac{d}{dt}\bigr|_{t=0}(0,t\hat{\mathbf e}_k)$,
we have
\begin{align*}
    \widetilde{X}_i(x,z)
    & =\frac{d}{dt}\Bigr|_{t=0} (x,z)
      \times (t{\mathbf e}_i,0)
    =\Bigl(\frac{\partial}{\partial x^i}\Bigr)_x
     +\frac12 \sum_{j=1}^n \sum_{k=1}^p x_j C_{ji}^k
      \Bigl(\frac{\partial}{\partial z^k}\Bigr)_z,
    \\
    \widetilde{Z}_k(x,z)
    & =\frac{d}{dt}\Bigr|_{t=0} (x,z)
      \times(0,t\hat{\mathbf e}_k)
    =\Bigl(\frac{\partial}{\partial z^k}\Bigr)_z.
\end{align*}
This implies 
$[\widetilde{X}_i,\widetilde{X}_j]=\sum_{k=1}^p C_{ij}^k
 \widetilde{Z}_k$, and hence 
$\widetilde{X}_1(x),\dots,\widetilde{X}_n(x),
 \widetilde{Z}_1(x),\dots,\widetilde{Z}_p(x)$ spans
$T_xG$ for every $x\in G$.
In particular, $\widetilde{X}_1,\dots,\widetilde{X}_n$
satisfies the equiregular H\"ormander condition at every
$x\in G$.
Then the heat equation associated with the second order
differential operator
\[
   \cL=\frac12\sum_{i=1}^n \widetilde{X}_i^2
\]
possesses the heat kernel $p_t((x_0,y_0),(x,z))$ with
respect to the Lebesgue measure. 
Note that the Lebesgue measure is a Haar measure on $G$,
because, by \eqref{eq.*.on.G}, the Jacobian determinant of
the left translation is equal to $1$.
Moreover, by \cite{br}, it coincides with 
Popp's measure multiplied by 
$\Bigl(\det\Bigl(\Bigl(\sum_{i,j=1}^n C_{ij}^k
    C_{ij}^\ell\Bigr)_{1\le k,\ell\le p}\Bigr)\Bigr)^{1/2}$.

The diffusion process generated by $\cL$ is 
\[
    \Bigl(\Bigl(
     x_0+w_t,z_0+\sum_{i<j} C_{ij}S_t^{ij}(x_0)
     \Bigr)\Bigr)_{t\ge0}.
\]
Due to the H\"ormander condition, with the help of
generalized Wiener functional, the heat kernel is
represented as  
\[
    p_t((x_0,z_0),(x,z))
    ={\mathbb E}\Bigl[\delta_{(x,z)}\Bigl(
      x_0+w_t,z_0+\sum_{i<j} C_{ij}S_t^{ij}(x_0)\Bigr)\Bigr].
\]
By the left invariance of $\widetilde{X}_i$, 
$1\le i\le n$, it holds 
\[
    \Bigl(x_0+w_t,z_0+\sum_{i<j} C_{ij}S_t^{ij}(x_0)\Bigr)
    =(x_0,z_0) \times
     \Bigl(w_t,\sum_{i<j} C_{ij}S_t^{ij}(x_0)\Bigr).
\]
Hence
\[
    p_t((x_0,z_0),(x,z))
    =p_t((0,0),(x_0,z_0)^{-1} \times
       (x,z))
\]
Thus, in what follows, we assume $(x_0,z_0)=(0,0)$.

Using the Fourier transform of the Dirac measure, we have
\begin{equation}\label{eq.append.21}
    p_t((0,0),(x,z))
    =\frac1{(2\pi)^p} \int_{\R^p} 
       e^{-\kyosu\langle\zeta,z\rangle}
       {\mathbb E}\bigl[e^{\kyosu S_t((\zeta\cdot C);0)}
          \delta_x(w_t)\bigr] d\zeta,
\end{equation}
where 
for $\zeta=(\zeta^1,\dots,\zeta^p)\in\R^p$,
 $(\zeta\cdot C)$ is the $n\times n$ skew symmetric
matrix
\[
    (\zeta\cdot C)
    = \Bigl(\sum_{k=1}^p 
      \zeta^k C_{ij}^k\Bigr)_{1\le i,j\le n}    
\]
If we define
\begin{align*}
    \widehat{p}_t((x,z))
    & =\frac1{(2\pi t)^{(n/2)+p}} \int_{\R^p}
       \biggl[\det\biggl(
         \frac{\sinh(\kyosu (\eta\cdot C)/2)}{
          \kyosu(\eta\cdot C)/2}\biggr)\biggr]^{-1/2}
    \\
    & \qquad
      \times  \exp\biggl(-\frac1{t}\Bigl\{
            \langle\eta,z\rangle_{\R^p}
        +\frac12 \langle T(1;(\eta\cdot C))^{-1}x,x
                       \rangle_{\R^n}\Bigr\}\biggr)d\eta,
\end{align*}
then plugging Theorem~\ref{thm.h.kernel} into
\eqref{eq.append.21} and using the change variable
$\eta=t\zeta$, we obtain 
\[
    p_t((0,0),(x,z))=\widehat{p}_t((x,z)).
\]

Summing up, we arrive at the following expression of the
heat kernel, which was also shown in an analytical way in
\cite{cy,bgg}. 

\begin{theorem}\label{thm.h.k.nilpotent}
The heat kernel associated with $\cL$ has the form
\begin{equation}\label{t.h.k.nilpotent.1}
    p_t((x_0,z_0),(x,z))=\widehat{p}_t((x_0,z_0)^{-1}
        \times (x,z)).
\end{equation}
\end{theorem}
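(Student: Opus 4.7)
The proof is essentially assembled from the pieces already laid out in the preceding discussion, so the plan is to record those pieces in the correct order.

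The plan is first to reduce the problem to the case of the identity starting point. Since the SDE defining the diffusion generated by $\cL$ is driven by left-invariant vector fields, the process starting at $(x_0,z_0)$ is the left translate $(x_0,z_0) \times (\,\cdot\,)$ of the process starting at $(0,0)$, as is already noted in the discussion preceding the theorem via the identity $(x_0+w_t, z_0 + \sum_{i<j} C_{ij} S_t^{ij}(x_0)) = (x_0,z_0) \times (w_t, \sum_{i<j} C_{ij} S_t^{ij}(0))$. Because the Lebesgue measure is a Haar measure on $G$ (the Jacobian determinant of each left translation is $1$), pulling the delta function through this translation gives $p_t((x_0,z_0),(x,z)) = p_t((0,0), (x_0,z_0)^{-1} \times (x,z))$. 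So it suffices to prove $p_t((0,0),(y,v)) = \widehat{p}_t(y,v)$ for arbitrary $(y,v) \in \R^{n+p}$.

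Next I would invoke the Fourier inversion representation \eqref{eq.append.21}, which follows from $\delta_v(\,\cdot\,) = (2\pi)^{-p} \int_{\R^p} e^{-\kyosu \langle \zeta, v-\,\cdot\,\rangle}\, d\zeta$ applied to the $\R^p$-component of the delta function. The $\R^p$-component of the diffusion at time $t$ (when started at $0$) equals $\sum_{i<j} C_{ij} S_t^{ij}(0) = (2S_t((\zeta\cdot C);0)/\kyosu)|_{\text{contract with }\zeta}$, so after pulling the exponential inside the generalized expectation one is left with the Watanabe-distributional pairing $\mathbb{E}[e^{\kyosu S_t((\zeta\cdot C);0)} \delta_y(w_t)]$ against the Fourier kernel $e^{-\kyosu\langle\zeta,v\rangle}$. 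This is legitimate because $\delta_y(w_t)$ is a well-defined Watanabe distribution (standard non-degeneracy of Brownian motion) and the exponential weight has uniform Sobolev bounds in $\zeta$.

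Now I would substitute the explicit formula \eqref{t.h.kernel.1} from Theorem~\ref{thm.h.kernel}, taking $\Xi = (\zeta \cdot C)$ and $x=0$: the $e^{-\kyosu\langle\Xi x, y\rangle/2}$ factor disappears, and one is left with
\[
\mathbb{E}[e^{\kyosu S_t((\zeta\cdot C);0)} \delta_y(w_t)] = \frac{1}{(2\pi t)^{n/2}} \Bigl[\det\Bigl(\tfrac{\sinh(\kyosu t(\zeta\cdot C)/2)}{\kyosu t(\zeta\cdot C)/2}\Bigr)\Bigr]^{-1/2} \exp\Bigl(-\tfrac{1}{2t}\langle T(t;\zeta\cdot C)^{-1} y, y\rangle_{\R^n}\Bigr).
\]
The final step is the change of variable $\eta = t\zeta$ in the outer $\R^p$-integral, which introduces a Jacobian $t^{-p}$, converts $(\zeta\cdot C)$ into $t^{-1}(\eta \cdot C)$ inside $\sinh$ and $T$, and combines with the $e^{-\kyosu\langle\zeta,v\rangle}$ factor to produce precisely $e^{-\kyosu \langle \eta, v\rangle/t}$. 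Because $T(t;t^{-1}\eta\cdot C) = T(1;\eta\cdot C)$ and the $\sinh$-determinant is likewise independent of $t$ under this substitution, the resulting integral matches the definition of $\widehat{p}_t(y,v)$ verbatim.

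The genuinely substantive step is the one already done in Theorem~\ref{thm.h.kernel}: everything that follows is bookkeeping involving Fourier inversion, left invariance, and a rescaling. The only small subtlety to justify carefully is the commutation of the $d\zeta$-integral with the generalized expectation, which is safe thanks to Remark~\ref{rem.nonzero} (the $\sinh$ determinant is nonzero, so all matrices in sight are invertible) and the Gaussian-type decay of the integrand in $\zeta$ coming from the $\exp(-\tfrac{1}{2t}\langle T(t;\zeta\cdot C)^{-1}y,y\rangle)$ factor together with the growth of the $\sinh/\bullet$ determinant.
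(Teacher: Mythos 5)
Your proposal is correct and follows essentially the same route as the paper's own argument: reduction to the identity via left invariance and the Haar property of Lebesgue measure, Fourier inversion in the $\R^p$-component, substitution of Theorem~\ref{thm.h.kernel} with $x=0$ and $\Xi=(\zeta\cdot C)$, and the rescaling $\eta=t\zeta$. The only additions are your (correct) justification of interchanging the $d\zeta$-integral with the generalized expectation and your writing the left-invariance identity with $S_t^{ij}(0)$ rather than $S_t^{ij}(x_0)$ in the second slot, which is in fact the consistent form.
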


\bigskip

\noindent
{\bf Acknowledgement}~The first-named author is partially
supported by JSPS KAKENHI Grant Number 15K04922, and the
second-named author is partially supported by JSPS KAKENHI
Grant Number 15K04931.

%

\vspace{10mm}

\begin{flushleft}
\begin{tabular}{ll}
Yuzuru \textsc{Inahama}
\\
Graduate School of Mathematics,   Kyushu University,
\\
Motooka 744, Nishi-ku, Fukuoka 819-0395, JAPAN.
\\
Email: {\tt inahama@math.kyushu-u.ac.jp}
\end{tabular}
\end{flushleft}

\bigskip

\begin{flushleft}
\begin{tabular}{ll}
Setsuo \textsc{Taniguchi}
\\
Faculty of Arts and Science,   Kyushu University,
\\
Motooka 744, Nishi-ku, Fukuoka 819-0395, JAPAN.
\\
Email: {\tt se2otngc@artsci.kyushu-u.ac.jp}
\end{tabular}
\end{flushleft}

\end{document}